\pgfplotsset{compat=1.15}
\theoremstyle{plain}
\newtheorem{theor}{Theorem}[section]
\newtheorem{lem}[theor]{Lemma}
\theoremstyle{definition}
\newtheorem{examples}[theor]{Examples}
\newtheorem{rems}[theor]{Remarks}
\newtheorem{rem}[theor]{Remark}
\theoremstyle{plain}
\newtheorem*{asn*}{\assumptionnumber}
  \providecommand{\assumptionnumber}{}
  \newenvironment{asn}[2]
   {\renewcommand{\assumptionnumber}{Assumption~#1 {\normalfont--- #2}}
    \begin{asn*}
    \protected@edef\@currentlabel{{\normalfont#1}}}
   {\end{asn*}}
\newtheorem*{assn*}{\assumptionnumber}
  \providecommand{\assumptionnumber}{}
  \newenvironment{assn}[1]
   {\renewcommand{\assumptionnumber}{Assumption~#1}
    \begin{assn*}
    \protected@edef\@currentlabel{{\normalfont#1}}}
   {\end{assn*}}
\numberwithin{equation}{section}
\newcommand{\e}{\varepsilon}
\newcommand{\calF}{\mathcal{F}}
\newcommand{\dist}{\operatorname{dist}}
\newcommand{\R}{\mathbb R}
\newcommand{\calP}{\mathcal P}
\newcommand{\cvf}{\rightharpoonup}
\newcommand{\loc}{{\operatorname{loc}}}
\newcommand{\E}{\mathbb{E}}
\newcommand{\Ld}{{L}}
\newcommand{\diam}{\operatorname{diam}}
\newcommand{\Div}{{\operatorname{div}}}
\newcommand{\step}[1]{\noindent \textit{Step} #1.}
\newcommand{\substep}[1]{\noindent \textit{Substep} #1.}
\newcommand{\Pm}{\mathbb{P}}
\newcommand{\expec}[1]{\mathbb{E}\left[ #1 \right]}
\title{Homogenization of the stochastic double-porosity model}
\author[E. Bonhomme]{Elise Bonhomme}
\address[Elise Bonhomme]{Universit\'e Libre de Bruxelles, D\'epartement de Math\'ematique, 1050~Brussels, Belgium}
\email{elise.bonhomme@ulb.be}
\author[M. Duerinckx]{Mitia Duerinckx}
\address[Mitia Duerinckx]{Universit\'e Libre de Bruxelles, D\'epartement de Math\'ematique, 1050~Brussels, Belgium}
\email{mitia.duerinckx@ulb.be}
\author[A. Gloria]{Antoine Gloria}
\address[Antoine Gloria]{Sorbonne Universit\'e, Universit\'e Paris Cit\'e, CNRS, Laboratoire Jacques-Louis Lions, LJLL, F-75005 Paris, France  \& Universit\'e Libre de Bruxelles, D\'epartement de Math\'ematique, 1050~Brussels, Belgium}
\email{antoine.gloria@sorbonne-universite.fr}
\begin{document}

\begin{abstract}
This work is devoted to the homogenization of elliptic equations in high-contrast media in the so-called `double-porosity' resonant regime,
for which we solve two open problems of the literature. First, we prove qualitative stochastic homogenization under very weak conditions, which cover the case of inclusions that are not uniformly bounded or separated. Second, under stronger assumptions, we provide sharp error estimates for the two-scale expansion. The main difficulty is related to the loss of integrability of the control in the resonant zones.
\end{abstract}

\maketitle
\setcounter{tocdepth}{1}
\tableofcontents

\section{Introduction}
This paper is concerned with the homogenization of the so-called ``double-porosity'' problem, which is a standard averaged mesoscopic model used in engineering to describe flows in fractured porous media (see e.g.~\cite{Hornung-97}).
The model takes the form of a parabolic problem in a medium described by a connected matrix punctured by a dense array of small soft inclusions --- in the specific resonant regime when the conductivity inside inclusions scales like the square of the typical size of the inclusions.
This specific high-contrast regime leads to resonant phenomena with unusual micro-macro scale interactions and memory effects, which have served as a basis in the design of various metamaterials (see e.g.~\cite{Bouchitte-15} and references therein).
Upon Laplace transform, the parabolic equation reduces to the corresponding {\it massive} elliptic problem: in a given domain $D\subset\R^d$, given a forcing $f\in\Ld^2(\R^d)$, denoting by $F_\e(D)$ the set of inclusions of size $\e$ in $D$, we consider in this article the solution~$u_\e$ of the problem
\begin{equation}\label{eq:double-por-00}
\left\{\begin{array}{ll}
u_\e -\nabla\cdot (\mathds1_{D\setminus F_\e(D)}+\e^2\mathds1_{F_\e(D)})\nabla u_\e = f,&\qquad\text{in $D$},\\[1mm]
u_\e=0.&\qquad\text{on $\partial D$}.\end{array}\right.
\end{equation}
Let us recall the state of the art on this problem.
Qualitative homogenization of this high-contrast model was first established by Arbogast, Douglas, and Hornung~\cite{Arbogast} in the periodic setting using an approach that is the precursor of the periodic unfolding method (see also~\cite{Smyshlyaev0} for finer assumptions).
It was soon after reproved by Allaire~\cite{Allaire-92} using two-scale convergence, and the corresponding stochastic setting was first treated by Bourgeat,~Mikeli\'{c}, and~Piatnitski~\cite{MR1993376} using stochastic two-scale convergence (see also some refinements in~\cite{MR3902123}).
An alternative variational approach by $\Gamma$-convergence in the periodic setting was developed by Braides, Chiad\`o Piat, and Piatnitski~\cite{Braides-Piatnitski-04}.
There is also a very large literature on the spectral behavior of high-contrast operators, see for instance~\cite{Zhikov-0,Smyshlyaev1,Smyshlyaev,CCV-23,CCV-21}, and the references therein. In the periodic setting, error bounds in $\Ld^2$ are implicitly contained in the work of Zhikov~\cite{Zhikov-00}, which were more recently improved in the form of sharp resolvent estimates in~\cite{Cherednichenko-Cooper-16,Cherednichenko-Ershova-Kiselev-20,Smyshlyaev}.

Two main questions have remained open in the field. First, whereas homogenization for the nonresonant version of the problem (the so-called soft-inclusion problem\footnote{that is, \eqref{eq:double-por-00} with conductivity $\e^2$ replaced by $0$ inside the inclusions.}) holds under weak geometric assumptions on the inclusions, see e.g.~\cite[Chapter~8]{JKO94}, all works on double porosity assume inclusions to be both uniformly bounded and uniformly separated from one another.
Second, the only quantitative convergence rates available are proved using Floquet theory, and therefore only hold in the periodic setting on the whole space $\R^d$ --- and not on bounded domains or for random inclusions.
In the present article, we give a definite answer to both questions. 
\begin{enumerate}[---]
\item \emph{Qualitative homogenization under weak geometric assumptions:}\\
We prove stochastic homogenization of the double-porosity problem under (almost) the same assumptions as those needed for the homogenization of soft inclusions, see Theorem~\ref{th:main}, and we also prove a general qualitative 
corrector result, see Theorem~\ref{th:corr-qual}.
\smallskip\item \emph{Quantitative error estimates:}\\
We establish new and optimal error estimates for the two-scale expansion of the double-porosity problem (both for the periodic and random settings, both on bounded domains and in the whole space), see Theorem~\ref{th:quant}.
\end{enumerate} 
Although we focus on the scalar case for notational simplicity,  all our results hold for systems (truncation arguments that we use in the proof then need to be performed componentwise).

Last, we also provide a rather extensive survey of extension results for gradient fields (with detailed proofs, see Lemma~\ref{lem:ass1} and Appendix~\ref{sec:app1}). Such results are a key tool for the homogenization of problems with inclusions, but they are scattered in the literature and part of the homogenization folklore. We believe this extensive survey, also including some new results, can serve as a good reference for future work in the field.

\subsection{Qualitative theory}
%: homogenization, corrector result, convergence of the spectrum}
Let $F=\cup_nI_n\subset \R^d$ be a random ensemble of inclusions, which we assume throughout this work to satisfy the following general assumption.

\begin{assn}{H$_0$}\label{ass0}
The random set $F=\cup_nI_n\subset \R^d$ is a stationary ergodic random inclusion process,\footnote{More precisely, on a given a probability space $(\Omega,\Pm)$, we consider a collection $\{I_n\}_n$ of maps $I_n:\Omega\to\mathcal O(\R^d)$, where $\mathcal O(\R^d)$ stands for the collection of open subsets of $\R^d$, and we require that for all $n$ the indicator function $\mathds1_{I_n}$ is measurable on the product space $\Omega\times\R^d$. Stationarity of $F=\cup_nI_n$ then means that the finite-dimensional laws of the random field $\mathds1_{x+F}$ do not depend on the shift $x\in\R^d$. Ergodicity means that, if a function is measurable with respect to $\mathds1_F$ and is almost surely unchanged when $F$ is replaced by $x+F$ for any $x\in\R^d$, then the function is almost surely constant.}
and it satisfies the following:
\begin{enumerate}[$\bullet$]
\item The inclusions $I_n$'s are almost surely disjoint, open, connected, bounded sets, with Lipschitz boundary, and the complement~\mbox{$\R^d\setminus F$} is almost surely connected.
\smallskip\item The random set $F$ is nontrivial in the sense that $\E[\mathds1_F]<1$.
\end{enumerate}
\end{assn}

Establishing homogenization in the case of soft inclusions requires to extend functions defined on $\R^d\setminus F$ into functions defined on $\R^d$. Except under very restrictive assumptions on the inclusion process, the extension operator cannot be bounded in~$H^1$ in general, and we need to cope with a possible loss of integrability, see e.g.~\cite{Zhikov-86,Zhikov-90} or~\cite[Lemma~8.22]{JKO94}.
For the double-porosity problem, on top of this extension property, one further needs to solve resonant cell problems inside the inclusions. If there is a loss in the integrability exponent in the extension property, energy estimates are not enough to control the solutions of such cell problems: instead, suitable $\Ld^q$ regularity estimates are needed, which are only known to hold under suitable regularity assumptions on the inclusions. In brief, our analysis requires the following two properties:
\begin{enumerate}[{H$_2$:}]
\item[{H$_1$:}] extension property for functions defined outside the inclusions, with a possible loss in the integrability exponent;
\smallskip\item[{H$_2$:}] suitable $L^q$ elliptic regularity estimates in each inclusion, for some exponent $q$ depending on the loss of integrability in~{H$_1$}.
\end{enumerate}
Instead of making specific geometric assumptions that ensure the validity of both properties,
we take a more abstract point of view and formulate the exact properties that are needed for our homogenization result to hold. Their validities are separate probability and PDE questions that are discussed in detail in Section~\ref{sec:validity-ass} below.
Given some $1<p\le2\le q<\infty$, we consider the following two assumptions:
\begin{asn}{H$_1(p)$}{Extension property}\label{ass1}
For all balls $B\subset\R^d$ and all $\e>0$ small enough (only depending on~$B$),
there exists an extension operator $P_{B,\e}:H^1_0(B)\to W^{1,p}_0(2B)$ such that for all $u\in H^1_0(B)$ the extension $P_{B,\e} u\in W^{1,p}_0(2B)$ satisfies $P_{B,\e} u=u$ in $B\setminus \e F$ and
\begin{equation}\label{eq:ass1-1}
\|\nabla P_{B,\e} u\|_{\Ld^{p}(2B)}\,\le\,C_B\|\nabla u\|_{\Ld^2(B\setminus\e F)},
\end{equation}
for some constant $C_B$ only depending on $d,B$. In addition, there exists an extension operator $P:H^1_\loc(\R^d)\to W^{1,p}_\loc(\R^d)$ such that for all $u\in H^1_\loc(\R^d)$ the extension $Pu\in W^{1,p}_\loc(\R^d)$ satisfies $Pu=u$ in~$\R^d\setminus F$ and such that, for any random field $u\in L^2(\Omega;H^1_\loc(\R^d))$ with $(u,\{I_n\}_n)$ jointly stationary, the extension $Pu$ is also stationary and satisfies
\begin{equation}\label{eq:ass1-2}
\|\nabla Pu\|_{\Ld^p(\Omega)}\,\le\,C\|\mathds1_{\R^d\setminus F}\nabla u\|_{\Ld^2(\Omega)},
\end{equation}
for some constant $C$ only depending on $d$.
\end{asn}

\begin{asn}{H$_2(q)$}{Elliptic regularity}\label{ass2}
There exists a constant $C_0$ such that the following holds:\footnote{Note that this property is required to hold for any {\it unit} ball $B$ and is thus not scale-invariant: this role of scale~$O(1)$ originates in the~$O(1)$ massive term in the double-porosity model~\eqref{eq:double-por-00} that we consider.}
for all $n$, for all unit balls $B$, if $g\in L^q(I_n\cap 2B)^d$ and $u\in H^1_0(I_n)$ satisfy the following relation in the weak sense,
\[u-\triangle u=\Div(g),\qquad\text{in $I_n\cap 2B$},\]
then we have the local estimate
\begin{equation*}
\|\nabla u\|_{\Ld^{q}(I_n\cap B)}\le C_0\Big(\|g\|_{\Ld^{q}(I_n\cap 2B)}+\|\nabla u\|_{\Ld^2(I_n \cap 2B)}\Big).
\end{equation*}
\end{asn}

As elliptic regularity inside inclusions is only needed to compensate for the loss in the integrability exponent in the extension property, we need Assumption~\ref{ass2} with $q=p'$ if Assumption~\ref{ass1} holds. Our first qualitative result now reads as follows.

\begin{theor}[Qualitative homogenization]\label{th:main}
Let the random inclusion process $F=\cup_nI_n\subset \R^d$ satisfy Assumptions~\ref{ass0}, \ref{ass1}, and~\ref{ass2} for some $1<p \le 2$ and $q=p'$.
Given a bounded Lipschitz domain $D\subset\R^d$, for all~$\e>0$, let $F_\e(D):=\cup\{\e I_n:\e I_n\subset D\}$, let $\chi_\e:=\mathds1_{F_\e(D)}$, let $f\in \Ld^2(D)$, and consider the solution $u_\e\in H^1_0(D)$ of the double-porosity problem
\begin{equation}\label{eq:double-por-eqn}
u_\e-\nabla\cdot(1-\chi_\e+\e^2\chi_\e)\nabla u_\e=f,\qquad\text{in $D$}.
\end{equation}
Then we have almost surely
\begin{equation}\label{eq:res-weak-conv}
u_\e\cvf{}\bar u+\E[v](f-\bar u),\qquad (1-\chi_\e)\nabla u_\e\cvf{}\bar a\nabla\bar u,\qquad\text{weakly in $\Ld^2(D)$},
\end{equation}
where:
\begin{enumerate}[---]
\item $v\in H^1_0(F)$ is the unique almost sure weak solution of
\begin{equation}\label{e.999}
v-\triangle v=1,\qquad\text{in $F$},
\end{equation}
which is a stationary random field and satisfies $0<\E[v]<1$;
\smallskip\item$\bar u\in H^1_0(D)$ is the unique solution of the homogenized problem
\begin{equation}\label{eq:homog-prob}
(1-\E[v])\bar u-\nabla\cdot\bar a\nabla\bar u=(1-\E[v])f,\qquad\text{in $D$},
\end{equation}
where $\bar a$ is the homogenized matrix for the soft-inclusion problem, that is, the symmetric positive-definite matrix given by the following cell formula, for all $\xi\in \R^d$,
\begin{equation}\label{eq:def-bara}
\xi\cdot\bar a \xi \,:=\,\inf\Big\{\E[\mathds1_{\R^d\setminus F}|\xi+\nabla\gamma|^2]\,:\,\gamma\in H^1_\loc(\R^d;\Ld^2(\Omega)),\,\text{$\nabla\gamma$ stationary},\,\E[\nabla\gamma]=0\Big\}.
\end{equation} 
\end{enumerate}
\end{theor}

We face two main difficulties to prove Theorem~\ref{th:main}: a compactness issue due to the loss of integrability in the extension operators (preventing us from using two-scale compactness, which was the only strategy available so far in the random setting) and a control of correctors that is too weak for a direct use of Tartar's method of oscillating test functions. Compactness will be obtained the hard way by elliptic regularity, whereas correctors will be replaced by suitable approximations (the analysis of which turns out to be quite challenging) and combined with truncation arguments to run the oscillating test-functions approach.

\medskip
As usual in homogenization theory, in order to capture leading-order oscillations around the weak convergences in~\eqref{eq:res-weak-conv}, we need to introduce suitable correctors. Here, the relevant correctors are those associated with the corresponding soft-inclusion problem, that is, the minimizers of the cell problem~\eqref{eq:def-bara}. We refer to~\cite[Chapter~8]{JKO94} for their existence and uniqueness under the extension assumption~\ref{ass1}.

\begin{lem}[Correctors; see~\cite{JKO94}]\label{lem:corr0}
Let the random inclusion process $F=\cup_nI_n\subset \R^d$ satisfy Assumptions~\ref{ass0} and~\ref{ass1} with $1<p\le2$.
For $1\le i\le d$, there exists a random field $\varphi_i\in \Ld^p(\Omega;W^{1,p}_\loc(\R^d))\linebreak\cap\Ld^2(\Omega;H^1_\loc(\R^d\setminus F))$ that solves the variational problem~\eqref{eq:def-bara} in the direction $\xi= e_i$, that is, an almost sure weak solution of the corrector problem
\begin{equation}\label{eq:corr}
-\nabla\cdot(\mathds1_{\R^d\setminus F}(e_i+\nabla\varphi_i))=0,\qquad\text{in $\R^d$},
\end{equation}
such that $\nabla\varphi_i$ is stationary and $\E[\nabla\varphi_i]=0$. The restriction $\mathds1_{\R^d\setminus F}\nabla\varphi_i$ is defined uniquely and for all $\xi\in\R^d$ the homogenized matrix defined in~\eqref{eq:def-bara} coincides with
\begin{equation}\label{eq:def-bara-re}
\xi\cdot\bar a \xi\,=\,\E[\mathds1_{\R^d\setminus F}|\xi+\nabla\varphi_\xi|^2]\,=\,\xi\cdot\E[\mathds1_{\R^d\setminus F}(\xi+\nabla\varphi_\xi)],
\end{equation}
where we have set $\varphi_\xi:=\sum_{i=1}^d\xi_i\varphi_i$.
\end{lem}

Next to the above qualitative homogenization result, we show that oscillations around the weak convergences in~\eqref{eq:res-weak-conv} are precisely captured in terms of these correctors.
Henceforth, we use Einstein's summation convention on repeated indices.

\begin{theor}[Corrector result]\label{th:corr-qual}
With the same assumptions and notations as in Theorem~\ref{th:main}, further assuming that the homogenized solution $\bar u$ satisfies $\nabla\bar u\in \Ld^\infty(D)$, we have almost surely
\begin{equation}\label{eq:res-st-conv}
\begin{array}{rll}
u_\e-\bar u -v(\tfrac\cdot\e)(f-\bar u)&\to&0,\\[1mm]
(1-\chi_\e)\big(\nabla u_\e-(e_i+\nabla\varphi_i)(\tfrac\cdot\e)\nabla_i \bar u_\e \big)&\to&0,\qquad\text{strongly in $L^2(D)$},
\end{array}
\end{equation}
where $\varphi_i$ is the corrector defined in Lemma~\ref{lem:corr0} above and where $\bar u_\e$ is any approximation\footnote{One possible choice is $\bar u_\e = \zeta_\e \ast (\bar u \mathds 1_D)$ with $\zeta_\e:=\e^{-d} \zeta(\frac \cdot \e)$ for some $\zeta \in C^\infty_c(\R^d)$ with $\int_{\R^d}\zeta=1$. If $\nabla \bar u \in C^0_b(D)$, then one can actually take $\bar u_\e=\bar u$ in~\eqref{eq:res-st-conv}.} of~$\bar u$ in~$H^1(D)$ such that $\sup_\e (\|\nabla \bar u_\e\|_{\Ld^\infty(D)}+\e \|\nabla^2 \bar u_\e\|_{\Ld^\infty(D)})<\infty$.
\end{theor}

\begin{rem}[Extension to oscillatory data]\label{rem:oscf}
The proof of the above results is easily adapted to the more general case of a right-hand side of the form $f_\e=f(x,\frac x\e,\omega)$ with $f(x,\cdot)$ stationary for all $x$ and with $\|f_\e\|_{L^2(D)}\lesssim1$. In that setting, our results are modified as follows: almost surely,
\begin{equation*}
\begin{array}{rlll}
u_\e-(1-\E[v])\bar u-\E[v_f]&\cvf&0,\qquad&\text{weakly in $L^2(D)$},\\[1mm]
u_\e-(1-v(\tfrac\cdot\e))\bar u-v_f(\tfrac\cdot\e)&\to&0,\qquad&\text{strongly in $L^2(D)$},\\[1mm]
(1-\chi_\e)\big(\nabla u_\e-(e_i+\nabla\varphi_i)(\tfrac\cdot\e)\nabla_i\bar u\big)&\to&0,\qquad&\text{strongly in $L^2(D)$},
\end{array}
\end{equation*}
where:
\begin{enumerate}[---]
\item on top of $v$ defined as in Theorem~\ref{th:main}, for all $x\in\R^d$, we define $v_f(x,\cdot)$ as the unique stationary random field that is the almost sure weak solution of
\[v_f(x,\cdot)-\triangle v_f(x,\cdot)=f(x,\cdot),\qquad\text{in $F$};\]
\item $\bar u\in H^1_0(D)$ is now the unique solution of the modified homogenized problem
\[(1-\E[v])\bar u-\nabla\cdot\bar a\nabla\bar u=\E[f-v_f],\qquad\text{in $D$}.\]
\end{enumerate}
This extension of our results for oscillatory data provides in particular a qualitative two-scale analysis of resolvents, which can be used as a starting point for questions related to the convergence of the spectrum as studied e.g.~in~\cite{Smyshlyaev1,CCV-23,CCV-21}. We do not pursue in that direction here.
\end{rem}

\subsection{Quantitative error estimates}
We now turn to the validity of quantitative error estimates for homogenization.
For that purpose, we shall need to strengthen our assumptions a little: we assume that the extension property~\ref{ass1} holds without loss of integrability, that is, with $p=2$, and we add to this a uniform boundedness requirement.

\begin{assn}{H$_3$}\label{ass3}
The random inclusion process $F=\cup_nI_n\subset \R^d$ satisfies~\ref{ass0} as well as the following:
\begin{enumerate}[$\bullet$]
\item\emph{Extension property:} \ref{ass1} holds with $p=2$, and for all balls $B$ and all $\e>0$ the extension operator~$P_{B,\e}$ further satisfies the $L^2$ bound $\|P_{B,\e}u\|_{L^2(2B)}\le C_B\|u\|_{L^2(B)}$.\footnote{Note that this $L^2$ control is obtained for free in all the situations covered in Lemma~\ref{lem:ass1} for which there is no loss of integrability $p=2$.}
\smallskip\item\emph{Uniform Poincar\'e constant:} $\sup_n\calP_2(I_n)<\infty$ almost surely, where $\calP_2(I_n)$ denotes the Poincar\'e--Wirtinger constant in $H^1(I_n)/\R$.
\end{enumerate}
\end{assn}

In order to get accurate error estimates,
next to the correctors defined in Lemma~\ref{lem:corr0} above,
we further introduce the associated flux correctors, as well as new so-called `inclusion correctors' similar to~\cite{bernou2023homogenization}, for which existence and uniqueness are standard, see~e.g.~\cite{MR4103433,bernou2023homogenization}. For our application, we focus on the case $p=2$ in the statement.

\begin{lem}[Flux and inclusion correctors; e.g.~\cite{MR4103433,bernou2023homogenization}]\label{lem:corr}
Let the random inclusion process $F=\cup_nI_n\subset \R^d$ satisfy Assumptions~\ref{ass0} and~\ref{ass1} with $p=2$ (say).
Next to the correctors $\varphi=\{\varphi_i\}_{1\le i\le d}$ with $\varphi_i\in H^1_\loc(\R^d;\Ld^2(\Omega))$ as defined in Lemma~\ref{lem:corr0}, we may define the flux correctors $\sigma=\{\sigma_{ijk}\}_{1\le i,j,k\le d}$ and the inclusion correctors $\theta=\{\theta_i\}_{1\le i\le d}$ as follows:
\begin{enumerate}[---]
\item For $1\le i,j,k\le d$, we define the flux corrector $\sigma_{ijk}\in H^1_\loc(\R^d;\Ld^2(\Omega))$ as the unique almost sure weak solution of
\[-\triangle \sigma_{ijk} = \nabla_j (q_{i})_k-\nabla_k (q_{i})_j,\]
in terms of $q_i:=\mathds1_{\R^d\setminus F}(e_i+\nabla \varphi_i)-\bar a e_i$, such that $\nabla\sigma_{ijk}$ is stationary, $\E[\nabla\sigma_{ijk}]=0$, $\E[|\nabla\sigma_{ijk}|^2] <\infty$, and $\int_{B(0,1)}\sigma_{ijk}=0$ (say). Note that the definition of $\bar a$ ensures $\E[q_i]=0$ and that the flux corrector~$\sigma_{ijk}$ satisfies
\[\sigma_{ijk}=-\sigma_{ikj},\qquad\nabla_k\sigma_{ijk}=(q_i)_j.\]
\item For $1\le i\le d$, we define the inclusion corrector $\theta_i\in H^1_\loc(\R^d;\Ld^2(\Omega))$ as the unique almost sure weak solution of
\[\triangle \theta_i \,=\, \nabla_iv,\] 
where we recall that $v$ is defined in~\eqref{e.999},
such that $\nabla\theta_i$ is stationary, $\E[\nabla\theta_i]=0$, $\E[|\nabla\theta_i|^2] <\infty$, and $\int_{B(0,1)}\theta_i=0$ (say). Note that it satisfies
\[\nabla\cdot\theta\,=\,v-\E[v].\]
\end{enumerate}
\end{lem}

We show that the quantitative homogenization of the double-porosity problem depends on the quantitative sublinearity of these correctors.
For shortness, in the statement below, we focus on the typical setting when those correctors have bounded moments: this is always the case in the periodic setting by Poincar\'e's inequality, whereas in the random setting we refer e.g.~to~\cite{GNO-quant,GO15,AKM-book,DG-22pol,bernou2023homogenization} for similar corrector bounds under suitable mixing assumptions (see~\cite{bernou2023homogenization} for the inclusion corrector $\theta$); we also refer to the recent work~\cite{bella2025} where the strategy of~\cite{GNO-quant} is adapted to soft inclusions.
The following quantitative homogenization result is new even in the periodic setting (for which we can simply drop the expectations).

\begin{theor}\label{th:quant}
Let the random inclusion process $F=\cup_nI_n\subset \R^d$ satisfy Assumption~\ref{ass3}, and further assume that the correctors~$\varphi,\sigma,\theta$ satisfy
\begin{equation}\label{eq:moment bounds}
    \textstyle\sup_x\E\big[|\varphi(x)|^2+|\sigma(x)|^2+ |\theta(x)|^2\big]\,<\,\infty.
\end{equation}
Given a bounded Lipschitz domain $D\subset\R^d$ and $f\in H^1(D)$, for all $\e>0$, 
let $u_\e\in H^1_0(D)$ be the solution of the double-porosity problem~\eqref{eq:double-por-eqn},
and let $\bar u\in H^1_0(D)$ be the solution of the corresponding homogenized problem~\eqref{eq:homog-prob}. If the latter satisfies
\begin{equation}\label{eq:hyp bar u}
\nabla^2\bar u\in L^2(D)\quad\text{and}\quad\nabla\bar u\in L^\infty(D),
\end{equation}
then we have
%{\color{red}[[we should further replace $v_\e$ by $v(\tfrac\cdot\e)(f-\bar u)$ up to similar error]]}
\begin{equation}\label{e:quant2s}
\E\Big[\|u_\e-\bar u-\e\varphi_i(\tfrac\cdot\e)\nabla_i\bar u\|_{H^1(D\setminus F_\e(D))}^2\Big]^\frac12+\E\Big[\|u_\e-\bar u -v(\tfrac\cdot\e)(f-\bar u)\|_{\Ld^2(D)}^2\Big]^\frac12\,\lesssim_{f,\bar u}\,\sqrt\e,
\end{equation}
where we use the same notation for $F_\e(D)$ as in the statement of Theorem~\ref{th:main} and where we recall that~$v$ is defined in~\eqref{e.999}.
\end{theor}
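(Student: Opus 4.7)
The plan is a two-scale energy argument, adapted to simultaneously handle the soft-inclusion corrector outside inclusions and the resonant profile $v_\e$ inside. Fix a boundary cutoff $\eta_\e\in C^\infty_c(D)$ with $\eta_\e\equiv 1$ on $\{\dist(\cdot,\partial D)>\sqrt\e\}$ and $|\nabla\eta_\e|\lesssim 1/\sqrt\e$, and introduce the ansatz
\[
w_\e \,:=\, \bar u + \eta_\e\,\e\varphi_i(\tfrac\cdot\e)\nabla_i\bar u + \chi_\e v_\e \;\in\; H^1_0(D),
\]
which lies in $H^1_0(D)$ since $\bar u$ and $\eta_\e$ vanish on $\partial D$, $F_\e(D)$ is compactly contained in $D$, and $v_\e$ has zero trace on $\partial F_\e(D)$. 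Setting $R_\e:=u_\e-w_\e\in H^1_0(D)$, by coercivity of the double-porosity bilinear form it suffices to estimate
\[
I(\phi) \,:=\, \int_D(f-w_\e)\phi - (1-\chi_\e+\e^2\chi_\e)\nabla w_\e\cdot\nabla\phi
\]
in the dual of the natural energy norm $\|\phi\|_{\mathrm{en}}^2:=\|\phi\|_{L^2(D)}^2+\|\nabla\phi\|_{L^2(D\setminus F_\e(D))}^2+\e^2\|\nabla\phi\|_{L^2(F_\e(D))}^2$.

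Splitting $I(\phi)$ by region and combining the weak forms of the corrector equation~\eqref{eq:corr} outside inclusions, of the equation for $v_\e$ inside (which, via Green's formula, produces a surface integral on $\partial F_\e(D)$ because $\phi$ does not vanish there), and of the homogenized equation~\eqref{eq:homog-prob}, after algebraic rearrangements one reduces
\[
I(\phi) \,=\, \int_D(\E[v]-\chi_\e)(f-\bar u)\phi \,-\, \e^2\int_{\partial F_\e(D)}(\partial_n v_\e)\phi \,+\, \calR(\phi),
\]
where $\calR(\phi)$ gathers the classical two-scale remainder, rewritten as a divergence of $\e\sigma(\cdot/\e)\nabla^2\bar u$ via the antisymmetry of $\sigma$, together with $\e\varphi(\cdot/\e)\nabla^2\bar u$ bulk terms and boundary-layer contributions from $\nabla\eta_\e$ localized in the $\sqrt\e$-strip near $\partial D$. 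The moment bounds~\eqref{eq:moment bounds} on $\varphi,\sigma$ and the regularity~\eqref{eq:hyp bar u} of $\bar u$ give $\E[|\calR(\phi)|]\lesssim\sqrt\e\,(\E[\|\phi\|_{\mathrm{en}}^2])^{1/2}$.

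The key step will be to cancel the two remaining $O(1)$-size terms above modulo $O(\e)$; this is where the inclusion corrector $\theta$ enters crucially. First, for $\tilde v_\e:=v_\e-v(\cdot/\e)(f-\bar u)$, the equation $\tilde v_\e-\e^2\triangle\tilde v_\e = 2\e(\nabla v)(\cdot/\e)\cdot\nabla(f-\bar u)+\e^2v(\cdot/\e)\triangle(f-\bar u)$ (obtained using $v-\triangle v=1$ in $F$) with zero Dirichlet data on $\partial F_\e(D)$ yields, by energy methods, $\|\tilde v_\e\|_{L^2(F_\e(D))}+\e\|\nabla\tilde v_\e\|_{L^2(F_\e(D))}=O(\e)$, which replaces $v_\e$ by $v(\cdot/\e)(f-\bar u)$ in the surface term at cost $O(\e)\|\phi\|_{\mathrm{en}}$. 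Next, the distributional identity $\nabla\chi_\e=-n(\cdot/\e)\delta_{\partial F_\e(D)}$ combined with $\triangle v=v-1$ on $F$ converts the surface integral into a bulk one:
\[
-\e^2\int_{\partial F_\e(D)}(\partial_n v_\e)\phi \,\approx\, \int_D\big(v(\tfrac\cdot\e)-\chi_\e\big)(f-\bar u)\phi + O(\e)\|\phi\|_{\mathrm{en}}.
\]
Finally, from $\nabla\cdot\theta=v-\E[v]$ one gets after rescaling $v(\cdot/\e)-\E[v]=\e\nabla\cdot[\theta(\cdot/\e)]$, whence $\int_Dv(\cdot/\e)(f-\bar u)\phi=\E[v]\int_D(f-\bar u)\phi+O(\e)\|\phi\|_{\mathrm{en}}$ by the moment bound~\eqref{eq:moment bounds} on $\theta$. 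Substitution produces the exact cancellation against $\int_D(\E[v]-\chi_\e)(f-\bar u)\phi$ at leading order, giving $\E[|I(\phi)|]\lesssim\sqrt\e\,(\E[\|\phi\|_{\mathrm{en}}^2])^{1/2}$.

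Testing the residual equation with $\phi=R_\e$ then yields, by buckling, $\E[\|R_\e\|_{\mathrm{en}}^2]\lesssim\e$. For the first estimate in~\eqref{e:quant2s}, on $D\setminus F_\e(D)$ the residual $R_\e$ differs from $u_\e-\bar u-\e\varphi_i(\cdot/\e)\nabla_i\bar u$ by $(1-\eta_\e)\e\varphi_i(\cdot/\e)\nabla_i\bar u$, supported in the $\sqrt\e$-strip near $\partial D$, hence of order $\sqrt\e$ in $H^1$ by $\nabla\bar u\in L^\infty$ and the moment bound on $\varphi$; for the second estimate, $R_\e$ differs from $u_\e-\bar u-\chi_\e v_\e$ by $\eta_\e\e\varphi_i(\cdot/\e)\nabla_i\bar u$, which is $O(\e)$ in $L^2$. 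The main obstacle is the cancellation described above: the boundary fluxes $\e^2(\partial_n v_\e)\delta_{\partial F_\e(D)}$ produced by the resonant mode are of $O(1)$ size and not directly absorbable by the energy norm; their compensation against the bulk mismatch $\int(\E[v]-\chi_\e)(f-\bar u)\phi$ requires the combined use of the leading-order resonant profile, the distributional structure of $\nabla\chi_\e$, and the inclusion corrector $\theta$, a mechanism specific to the double-porosity problem beyond classical homogenization.
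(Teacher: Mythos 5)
Your overall architecture --- a two-scale ansatz that includes $\chi_\e v_\e$, identification of the leading mismatch $\int_D(\E[v]-\chi_\e)(f-\bar u)\phi$, its cancellation through the resonant profile $v(\tfrac\cdot\e)(f-\bar u)$ and the inclusion corrector $\theta$, and a final buckling --- is close in spirit to the paper's proof. But there is a genuine gap in the functional setting: you estimate the residual $I(\phi)$ in the dual of the degenerate energy norm $\|\phi\|_{\mathrm{en}}^2=\|\phi\|_{L^2(D)}^2+\|\nabla\phi\|^2_{L^2(D\setminus F_\e(D))}+\e^2\|\nabla\phi\|^2_{L^2(F_\e(D))}$, which only gives $\|\nabla\phi\|_{L^2(F_\e(D))}\le\e^{-1}\|\phi\|_{\mathrm{en}}$. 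Every one of your integrations by parts produces a bulk term of the form $\e\int_D G(\tfrac\cdot\e)\,h\cdot\nabla\phi$ carrying no factor $(1-\chi_\e)$: the flux-corrector term $\e\int_D\sigma(\tfrac\cdot\e)\nabla^2\bar u\cdot\nabla\phi$, the inclusion-corrector term $\e\int_D\theta(\tfrac\cdot\e)(f-\bar u)\cdot\nabla\phi$, and the term $\e\int_{F_\e(D)}(\nabla v)(\tfrac\cdot\e)(f-\bar u)\cdot\nabla\phi$ arising when you convert the surface integral $\e^2\int_{\partial F_\e(D)}(\partial_\nu v_\e)\phi$ into a bulk one. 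Restricted to $F_\e(D)$, each of these is only bounded by $O(1)\,\|\phi\|_{\mathrm{en}}$ in root-mean-square (and Young's inequality does not help, since the squared prefactor is $O(1)$ in expectation, not $O(\e)$), so the claimed bound $\E[|I(\phi)|]\lesssim\sqrt\e\,\E[\|\phi\|_{\mathrm{en}}^2]^{1/2}$ fails and the buckling cannot close. A symptom of the problem is that your argument never invokes the extension property of Assumption~\ref{ass3}, which is precisely the hypothesis designed to fix this.

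The paper resolves the issue by taking as unknown not $u_\e-w_\e$ but the extension $w_{\eta,\e}:=P_{B,\e}\big((u_\e-\bar u-\e\rho\varphi_i(\tfrac\cdot\e)\nabla_i\bar u)\mathds1_{D\setminus F_\e(D)}\big)|_D$ of the outside error, for which $\|\nabla w_{\eta,\e}\|_{L^2(D)}\lesssim\|\nabla w_{\eta,\e}\|_{L^2(D\setminus F_\e(D))}$ by Assumption~\ref{ass3}; this quantity satisfies a closed equation for the non-degenerate operator $(1-\chi_\e)-\nabla\cdot(1-\chi_\e)\nabla$, and testing with $w_{\eta,\e}$ itself absorbs all the terms above. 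The price is a coupling term $\chi_\e(u_\e-\bar u-v_\e)$ on the right-hand side, which the paper controls by a separate estimate $\|\mathds1_{F_\e(D)}(u_\e-\bar u-v_\e)\|_{\dot H^{-1}(D)}\lesssim\e\|f\|_{L^2(D)}$ obtained from an auxiliary Neumann problem inside the inclusions together with the interior regularity of Lemma~\ref{lem:regp}; this step has no counterpart in your argument and would need to be supplied even after repairing the energy framework. Your treatment of the zeroth-order resonant term (replacing $v_\e$ by its leading profile and using $\nabla\cdot\theta=v-\E[v]$) does match the paper's mechanism, but as written the scheme does not yield the $O(\sqrt\e)$ rate.
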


\begin{rems}$ $
\begin{enumerate}[(a)]
\item The regularity condition~\eqref{eq:hyp bar u} is satisfied if $f$ and the domain $D$ are smooth enough. In particular, in dimension $1 \leq d \leq 3$, it is enough to consider $f\in H^1(D)$ and a domain $D$ of class~$C^1$: indeed, we then find $\nabla \bar u \in H^2(D)$, hence $\nabla \bar u \in L^\infty(D)$ by Sobolev embedding.
\smallskip\item In~\eqref{e:quant2s}, the error bound is $O(\sqrt\e)$ instead of $O(\e)$ due to boundary layers in the bounded domain~$D$. If $D$ is replaced by~$\R^d$ (or if $\bar u$ is compactly supported in $D$), then there is no boundary layer and the error bound can be strengthened to
\begin{equation*} 
\E\Big[\|u_\e-\bar u-\e \varphi_i(\tfrac\cdot\e)\nabla_i\bar u\|_{H^1(\R^d\setminus \e F)}^2\Big]+\E\Big[\|u_\e-\bar u - v(\tfrac\cdot\e)(f-\bar u)\|_{\Ld^2(\R^d)}^2\Big]^\frac12\,\lesssim \, \e\|f\|_{H^1(\R^d)}. 
\end{equation*}
Variants of two-scale expansions based on Floquet--Bloch theory (and thus restricted to the periodic setting on $\R^d$) allow to weaken the regularity condition on $f$ in this estimate; see the recent independent work~\cite[Section~7]{Smyshlyaev}. In the present setting, this would amount to stepping back a little and replacing the homogenized equation for $\bar u$ by the following effective, yet coupled, two-scale system, for $(\bar u_\e,w_\e)\in H^1(\R^d)\times H^1_0(\e F)$,
\begin{equation}\label{e.hom-coupled}
\left\{
\begin{array}{rcll}
\bar u_\e+w_\e-\nabla \cdot \bar a \nabla \bar u_\e &=&f,& \text{in $\R^d$},
\\
\bar u_\e+w_\e-\e^2 \triangle w_\e&=&f,\quad& \text{in $\e F$}.
\end{array}
\right.
\end{equation}
Note that the solution $(\bar u_\e,w_\e)$ is uniformly bounded in $H^1(\R^d)\times \Ld^2(\e F)$. In these terms, the proof of Theorem~\ref{th:quant} simplifies and yields
\begin{equation*} 
\E\Big[\|u_\e-\bar u_\e -\e\varphi_i(\tfrac\cdot\e)\nabla_i\bar u_\e \|_{H^1(\R^d\setminus \e F)}^2\Big]^\frac12+\E\Big[\|u_\e-\bar u_\e -w_\e\|_{\Ld^2(\R^d)}^2\Big]^\frac12\,\lesssim\,\e \|f\|_{\Ld^2(\R^d)},
\end{equation*}
where only the $\Ld^2$ norm of $f$ appears (instead of $H^1$). This extends~\cite[Theorem~7.8]{Smyshlyaev} to the random setting.
Note that the two-scale operator in~\eqref{e.hom-coupled} is the natural candidate to describe the spectrum of $1-\nabla \cdot (1-\chi_\e+\e^2\chi_\e)\nabla$.
\end{enumerate}
\end{rems}

\subsection{Discussion of the assumptions}\label{sec:validity-ass}
We turn to a detailed discussion of the validity of our two main assumptions~\ref{ass1} and~\ref{ass2}.
These have been largely discussed in the literature, in particular in the context of homogenization of the soft-inclusion problem. Here, we review previous results, adapt them to the present setting, and include some extensions and new cases. They illustrate the much wider applicability of Theorem~\ref{th:main} with respect to previous results of the literature.

We start with the extension assumption~\ref{ass1}, for which we mainly build upon previous work of Zhikov~\cite{Zhikov-86,Zhikov-90,Zhikov-90b}
(see also more recently~\cite{CCV-21,Heida-23}).
Several sufficient conditions are listed in the following lemma, for which
proofs and detailed references
are postponed to Appendix~\ref{sec:app}.
As this lemma shows, the validity of~\ref{ass1} depends in a subtle way on the geometric properties of the inclusions.
First, as stated in item~(i), we note that the separation distance between the inclusions needs in general to be large enough depending on their diameters: large empty space is needed around large inclusions. Up to losing some integrability,
this can be relaxed into a moment condition, cf.~(ii).
Measuring the size of the inclusions in terms of their diameters, however, is not optimal: in case of strongly anisotropic inclusions such as cylinders, we show in~(iii) that the suitable notion of size is given by the radii instead of the diameters --- thus strongly weakening the separation condition. We stick to cylindrical inclusions for shortness and leave easy generalizations of this result to the reader.
In~(iv), we state that no separation is actually required at all in the special case of uniformly convex inclusions (provided they are of comparable sizes, say). Finally, in a different perspective, we include as is in~(v) a result due to Zhikov~\cite{Zhikov-90} on percolation clusters in random chess structure.

\begin{lem}[Validity of~\ref{ass1}]\label{lem:ass1}
Let the random inclusion process $F=\cup_nI_n\subset \R^d$ satisfy~\ref{ass0}.

\begin{enumerate}[(i)]
\item \emph{General inclusions with uniform separation:}\\
Assume that there is a constant $C_0$ such that for all $n$ we have
\begin{equation}\label{eq:nun-infty}
\min_{m:m\ne n}\dist(I_n,I_m)\ge \tfrac1{C_0} \diam (I_n),\qquad\text{almost surely}.
\end{equation}
In addition, assume that the rescaled inclusions $I_n'=\diam(I_n)^{-1}I_n$ are uniformly Lipschitz in the following sense:\footnote{This regularity assumption coincides with (a suitable uniform version of) Stein's ``minimal smoothness'' assumption in~\cite[Chapter~VI, Section~3.3]{Stein-70} (see also~\cite[Theorem~3.8]{CCV-21}).} for all $n$ there is almost surely a collection of balls~$\{D_i^n\}_i$ covering $\partial I_n'$ such that
\begin{enumerate}[\quad---]
\item for all $i$, in some orthonormal frame, $D_i^n\cap\partial I_n'$ is the graph of a Lipschitz function with Lipschitz constant bounded by $C_0$;
\item for all $x \in \partial I_n'$ we have $B(x,\frac1{C_0}) \subset D_i^n$ for some $i$;
\item $\sup_i\sharp\{j:D_i^n\cap D_j^n\ne\varnothing\}\le C_0$.
\end{enumerate}\smallskip
Then~\ref{ass1} holds for all~$1 \leq p \le2$.

\medskip
\item \emph{General inclusions with moment condition on separation:}\\
For all $n$, consider the characteristic separation length
\begin{equation}\label{eq:def-nun}
\nu_n \,:=\, \tfrac{\rho_n}{D_n}\wedge1,\qquad\text{in terms of}\quad
\rho_n\,:=\,\min_{m:m\ne n}\dist(I_n,I_m),\quad
D_n\,:=\,\diam(I_n),
\end{equation}
and assume that the following moment bound holds, for some $\alpha\ge1$,
\begin{equation}\label{eq:cond-dist}
\limsup_{R\uparrow\infty}\frac1{|B(0,R)|}\sum_{n:I_n\cap B(0,R)\ne\varnothing} \nu_n^{-\alpha}\,<\,\infty,\qquad\text{almost surely}.
\end{equation}
In addition, assume for convenience the following strengthened form of uniform Lipschitz condition for the rescaled inclusions $I_n'=\diam(I_n)^{-1}I_n$:
there is a constant $C_0$ and for all $n$ there is a Lipschitz homeomorphism $\phi_n':B(0,2)\to\R^d$ (onto its image) such that $\phi_n'(B(0,1))=I_n'$ and $\tfrac1{C_0}\le\|\nabla\phi_n'\|_{L^\infty}\le C_0$.
Then~\ref{ass1} holds for all $1\le p\le\frac{2\alpha}{1+\alpha}$.

\medskip
\item \emph{Anisotropic inclusions with weaker moment condition on separation:}\\
Assume that each inclusion $I_n$ is the isometric image of a cylinder $B'(0,\delta_n)\times(-L_n,L_n)$, with random width $\delta_n$ and random length $L_n$, where $B'(0,\delta_n):=\{x'\in\R^{d-1}:|x'|<\delta_n\}$.
In this anisotropic setting, for all $n$, consider the modified characteristic separation length\footnote{Note that a slight modification of the proof would actually even allow to replace the ratio $\rho_n/\delta_n$ by $\rho_n/(\delta_n\wedge L_n)$, which is interesting in case of flat cylinders with $L_n\ll\delta_n$.}
\[\mu_n\,:=\,\tfrac{\rho_n}{\delta_n}\wedge1,\qquad\text{in terms of $\rho_n:=\min_{m:m\ne n}\dist(I_n,I_m)$,}\]
and, instead of~\eqref{eq:cond-dist}, assume that the following moment bound holds, for some $\beta \ge1$,
\begin{equation}\label{eq:cond-dist cylinders}
\limsup_{R\uparrow\infty}\frac1{|B(0,R)|}\sum_{n:I_n\cap B(0,R)\ne\varnothing} \mu_n^{-\beta}\,<\,\infty,\qquad\text{almost surely}.
\end{equation}
Then~\ref{ass1} holds for all~$1 \le p \le \frac{2 \beta}{1+\beta}$ (without any assumption on the random lengths!).

\medskip
\item \emph{Strictly convex inclusions without separation condition:}\\
Assume that each inclusion $I_n$ is strictly convex and of class~$C^2$, and assume that they satisfy the following uniform condition on the ratio of principal curvatures: there is a constant $C_0$ and for all $n$ there are radii $0<r_1^n<r_2^n<\infty$ such that ${r_2^n}/{r_1^n}\le C_0$ and such that for any boundary point $x \in \partial I_n$ there exist two balls~$B_{1}^n$ and~$B_{2}^n$, with radii~$r_1^n$ and~$r_2^n$ respectively, so that
\[B_1^n\subset I_n\subset B_2^n,\qquad \partial B_1^n\cap\partial I_n=\{x\}=\partial B_2^n\cap\partial I_n.\]
Further assume that no inclusion is surrounded by much smaller ones: for all $n$,
\begin{equation}\label{eq:closesmallincl}
\dist(I_n,I_m)\le\tfrac1{C_0}r^n_1~~~\Longrightarrow~~~ r^m_1\ge\tfrac1{C_0} r^n_1.
\end{equation}
Then~\ref{ass1} holds for all $1 \leq p<2\frac{d+1}{d+3}$ (without any separation condition!).\footnote{In fact, the upper bound $2\frac{d+1}{d+3}$ on the integrability exponent is not optimal: according to~\cite[Remark~3.13]{JKO94}, in dimension $d=3$, the optimal upper bound is $\frac32$ instead of $2\frac{d+1}{d+3}=\frac43<\frac32$.}

\medskip
\item \emph{Subcritical percolation clusters in random chess structure:}\\
Let the plane $\R^2$ be splitted into unit squares painted independently in black or white, with probability $\mu \in (0,1)$ and $1-\mu$, respectively,
and consider clusters of black squares having an edge in common. Assume that the probability $\mu$ is subcritical, that is, $0<\mu<\mu_c \sim 0.41$, so that all black clusters are bounded almost surely. Now define $F=\cup_nI_n$ as the complement of the infinite white cluster.
Then~\ref{ass1} holds for all $1\leq p<2$.
\end{enumerate}
\end{lem}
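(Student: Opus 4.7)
The common template across the five items is to build the extension inclusion-by-inclusion: for each~$I_n$ one constructs a local extension~$E_n$ on a neighborhood of~$I_n$ from the shape of~$I_n$ alone (plus its nearest separation in cases~(ii)–(iii)), then glues these local pieces together via a smooth partition of unity subordinated to a collection of pairwise disjoint shells. Once the patched operator~$P$ is identified as a measurable functional of the random set~$F$, stationarity of~$Pu$ follows from joint stationarity of~$(u,F)$, and the global bound~\eqref{eq:ass1-2} is obtained by summing the local estimates, exploiting the bounded overlap of the shells, and applying Birkhoff's ergodic theorem to reduce the empirical sum over inclusions hitting~$B(0,R)$ to an expectation. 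The deterministic bound~\eqref{eq:ass1-1} is handled identically after multiplication by a cutoff enforcing zero trace on~$\partial 2B$ and restriction of the sum to inclusions that meet~$B$.

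For item~(i), the rescaling $I_n\mapsto I_n'=\diam(I_n)^{-1}I_n$ reduces the uniform Lipschitz assumption to Stein's minimal-smoothness condition, so Stein's extension theorem furnishes each local operator $E_n:H^1(\R^d\setminus I_n')\to H^1(\R^d)$ with norm uniform in~$n$. Rescaling back and using~\eqref{eq:nun-infty} to install a partition of unity of shell thickness $\sim\diam(I_n)$ yields~\ref{ass1} with no loss of integrability, i.e.\ $p=2$. Items~(ii) and~(iii) follow the same scheme, except that the shell thickness must now be taken of order~$\rho_n$ instead of~$\diam(I_n)$ (respectively~$\delta_n$), so that the gradient of the rescaled cutoff is of order~$\nu_n^{-1}$ (respectively~$\mu_n^{-1}$) in units of~$\diam(I_n)^{-1}$. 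The local extension therefore obeys an estimate of the form $\|\nabla E_n u\|_{L^2(I_n)}\le C\nu_n^{-1}\|\nabla u\|_{L^2(V_n)}$, and a careful Hölder trade-off between this loss and the drop in integrability from~$2$ to~$p$ produces a moment sum $\sum_n\nu_n^{-p/(2-p)}$ per unit volume; matching $p/(2-p)\le\alpha$ yields exactly the admissible range $p\le 2\alpha/(1+\alpha)$, and analogously $p\le 2\beta/(1+\beta)$ in~(iii). Crucially, in~(iii) the reflection only needs to cross the transverse direction of the cylinder, so that long cylinders ($L_n\gg\delta_n$) cost nothing.

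Item~(iv) dispenses with any separation by using the two osculating balls $B_1^n\subset I_n\subset B_2^n$ at each boundary point: a reflection through these spheres, as in~\cite[Lemma~8.22]{JKO94}, provides the local extension, whose Jacobian degenerates only near points of small curvature. A change of variables and a count of this Jacobian's integrability show that the reflection is a bounded $W^{1,p}$-to-$W^{1,p}$ operator for every $p<2(d+1)/(d+3)$. Condition~\eqref{eq:closesmallincl} is invoked only to prevent an inclusion from being orbited by much smaller ones, so that the ``outer'' part of the reflection (which spills out of~$I_n$ into a region of size $\sim r_1^n$) does not overlap non-negligibly with neighboring inclusions and the global patching remains valid. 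For item~(v), the Peierls contour argument for subcritical percolation shows that each black cluster has stretched-exponential tail on its diameter, so all positive moments of $\diam(I_n)$ are finite per unit area; since black clusters in the chess structure are automatically separated by a row of white squares, item~(i)'s argument applies cluster-by-cluster, and the unboundedness of~$\diam(I_n)$ only forces the same mild Hölder loss as in~(ii), permitting every $p<2$.

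The main obstacle is to make each local extension~$E_n$ a measurable functional of~$I_n$ (and, in~(ii)–(iii), its nearest neighbor) so that the patched~$P$ is genuinely stationary — this is nontrivial in~(iv), where the reflection depends on a selection of osculating balls along $\partial I_n$, and in~(v), where measurability is with respect to the percolation cluster. The other substantial technical point is uniform control of the constants in the rescaled Stein/Poincaré inequalities through a single shape statistic ($\diam I_n'$, resp.\ $\mu_n$), which is precisely what the uniform Lipschitz/convexity/curvature hypotheses of each item are designed to guarantee.
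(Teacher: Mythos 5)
For items (i)--(iii) your proposal follows essentially the same route as the paper: local Stein extensions on rescaled inclusions, disjoint neighborhoods (of thickness $\sim\diam(I_n)$ in (i), $\sim\rho_n$ in (ii), transverse only in (iii)), and the H\"older trade-off between the $\nu_n^{-1}$ loss and the drop from $L^2$ to $L^p$, with the ergodic theorem converting the empirical sum into an expectation for~\eqref{eq:ass1-2}. One internal inconsistency: you state the local estimate as $\|\nabla E_nu\|_{L^2}\le C\nu_n^{-1}\|\nabla u\|_{L^2}$, but the correct loss (and the one your subsequent exponent arithmetic actually uses) is $\nu_n^{-1}$ on the \emph{squared} norms, i.e.\ $\nu_n^{-1/2}$ on the norms; with a genuine $\nu_n^{-1}$ loss on the norm the moment sum would be $\sum_n\nu_n^{-2p/(2-p)}$ and the admissible range would shrink to $p\le2\alpha/(2+\alpha)$.

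There are two genuine gaps. In~(iv), the substantive difficulty is not the model extension near a tangency (which is indeed the $L^2\to W^{1,p}$, $p<2\tfrac{d+1}{d+3}$, construction of~\cite[Lemma~3.14]{JKO94}, with the degeneracy located in the thin parabolic gap between two \emph{nearly tangent} inclusions, not at ``points of small curvature'' of a single one), but the construction of pairwise disjoint neighborhoods $T_n$ in the total absence of separation: the paper builds convex polytopes bounded by the tangent hyperplanes at the finitely many ``almost contact points'' of each inclusion, extends across each parabolic gap into a reduced inclusion, and only then applies a Stein extension to the reduced inclusions, which \emph{are} uniformly separated. Your proposal asserts that the patching ``remains valid'' by~\eqref{eq:closesmallincl} without providing this decomposition, which is where the work lies. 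In~(v), your argument rests on the claim that black clusters ``are automatically separated by a row of white squares'', which is false: distinct edge-connected clusters may touch at a vertex, so $\dist(I_n,I_m)=0$ is possible and the reduction to~(i)/(ii) (whose moment condition would then read $\nu_n=0$) breaks down. The loss of integrability in~(v) comes precisely from these corner contacts (the paper notes that $F_5$ is not Lipschitz for this reason) and not from the tail of the cluster diameters; the paper handles this item by citing Zhikov's construction rather than by reducing to~(i).
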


Next, we turn to the question of the validity of the elliptic regularity assumption~\ref{ass2}, which takes the form of a localized Calder\'on--Zygmund estimate in the inclusions. 
As it is well known, such estimates require suitable regularity of the inclusions' boundaries, and $C^1$ regularity is critical~\cite{Jerison-Kenig-95}. Sufficient conditions are listed in the following lemma, for which further explanations and detailed references are postponed to Appendix~\ref{sec:app}. 

\begin{lem}[Validity of~\ref{ass2}]\label{lem:ass2}
Let $\{I_n\}_n$ be a collection of disjoint, open, connected, bounded subsets of $\R^d$ with Lipschitz boundary.
\begin{enumerate}[(i)]
\item \emph{Uniformly $C^{1}$ inclusions:}\\
Assume that rescaled inclusions $I_n'=\diam(I_n)^{-1}I_n$ are uniformly of class $C^1$ in the following sense (which is the $C^1$ version of the uniform Lipschitz condition in Lemma~\ref{lem:ass1}(i)): there is a constant $C_0$ and a continuous map $\omega:[0,\infty]\to[0,\infty]$ with $\omega(0)=0$, and for all $n$ there is a collection of balls~$\{D_i^n\}_i$ covering~$\partial I_n'$ such that
\begin{enumerate}[\quad---]
\item for all $i$, in some orthonormal frame, $D_i^n\cap\partial I_n'$ is the graph of a $C^1$ function the gradient of which is bounded pointwise by $C_0$ and admits $\omega$ as a modulus of continuity;
\item for all $x \in \partial I_n'$ we have $B(x,\tfrac1{C_0}) \subset D_i^n$ for some $i$;
\item $\sup_i\sharp\{j:D_i^n\cap D_j^n\ne\varnothing\}\le C_0$.
\end{enumerate}\smallskip
Then~\ref{ass2} holds for all $2\le q<\infty$.
\medskip\item \emph{Uniformly Lipschitz inclusions:}\\
Assume that the rescaled inclusions $I_n'=\diam(I_n)^{-1}I_n$ satisfy the uniform Lipschitz regularity condition in Lemma~\ref{lem:ass1}(i), for some constant $C_0$.
Then there exists $q_0>3$ (or $q_0>4$ if $d=2$), only depending on $d,C_0$, such that~\ref{ass2} holds for all $2\le q\le q_0$.
\medskip\item \emph{Uniformly $C^1$ deformations of convex polygonal domains:}\\
Assume that there is a finite collection of convex polytopes $\{J_i\}_i$ such that rescaled inclusions $I_n'=\diam(I_n)^{-1}I_n$ are uniformly $C^1$ deformations of the $J_i$'s in the following sense:
there is a constant~$C_0$ and a continuous map $\omega:[0,\infty]\to[0,\infty]$ with $\omega(0)=0$, such that for all $n$ there is a $C^1$ diffeomorphism $\psi_n:J_{i}\to I_n'$, for some $i$, with $\frac1{C_0}\le\|\nabla\psi_n\|_{L^\infty}\le C_0$ and with $\omega$ being a modulus of continuity for both $\nabla\psi_n$ and $\nabla\psi_n^{-1}$.
Then~\ref{ass2} holds for all $2\le q<\infty$.
\end{enumerate}
\end{lem}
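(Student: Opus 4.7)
\emph{Plan.} The plan is to reduce the proof, in each of the three items, to a combination of known interior and boundary $L^q$ Calder\'on--Zygmund theory for the operator $u\mapsto u-\Delta u$ with Dirichlet data, together with a covering argument. Fixing a unit ball $B$ and an inclusion $I_n$, we will cover $\overline{I_n\cap B}$ by finitely many balls $\{B(x_i, r_i)\}_i$ of bounded multiplicity, each being either \emph{interior} ($B(x_i, 5r_i)\subset I_n\cap 3B$) or \emph{boundary} (centered on a point of $\partial I_n\cap\overline{I_n\cap B}$, with $B(x_i,5r_i)\subset 3B$ and $\partial I_n\cap B(x_i,5r_i)$ admitting the local parametrization prescribed by the hypothesis of the relevant item). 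When $\diam(I_n)\gtrsim 1$, the rescaled inclusion $I_n':=\diam(I_n)^{-1}I_n$ has uniform regularity at unit scale, so we may take $r_i\sim 1$; when $\diam(I_n)\ll 1$, we take $r_i$'s of order $\diam(I_n)$, inheriting uniform regularity at that scale from the rescaled $I_n'$.

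\emph{Local estimates.} On each interior ball, the classical interior $L^q$ Calder\'on--Zygmund estimate for $u-\Delta u=\Div(g)$ on a ball yields, for any $1<q<\infty$,
\[
\|\nabla u\|_{L^q(B(x_i, r_i))}\,\le\,C\,\big(\|g\|_{L^q(B(x_i, 5r_i))}+\|\nabla u\|_{L^2(B(x_i, 5r_i))}\big).
\]
On each boundary ball, we plan to flatten $\partial I_n\cap B(x_i,5r_i)$ using the local parametrization, transforming the equation into a divergence-form equation with appropriate coefficients on (a piece of) the reference domain: the half-space in cases (i) and (ii), a neighborhood of a vertex or face of a convex polytope in case (iii), with homogeneous Dirichlet data on the flat part. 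We then invoke the corresponding boundary $L^q$ theory: the Agmon--Douglis--Nirenberg $W^{1,q}$ estimates for any $1<q<\infty$ in case (i) (the flattened coefficients being $C^0$); the sharp Jerison--Kenig estimates on the Lipschitz half-space, yielding $q\le q_0$ with $q_0>3$ (or $q_0>4$ when $d=2$) in case (ii); and the $W^{1,q}$ theory for the Dirichlet Laplacian on convex polytopes due to Grisvard and Dauge, valid for all $1<q<\infty$, combined with a standard perturbation argument for the $C^0$ coefficients arising from $\nabla\psi_n$, in case (iii). In all cases, the $L^q$ constants will be uniform in $n$ and in the position of $B$ thanks to the uniformity of the regularity moduli in the hypothesis.

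\emph{Aggregation and main obstacle.} Raising each local estimate to the $q$-th power, summing over the cover, and using bounded multiplicity together with the trivial bound $\sum_i\|\nabla u\|_{L^2(B(x_i,5r_i))}^q\le N\,\|\nabla u\|_{L^2(I_n\cap 2B)}^q$ (with $N$ the number of balls in the cover), will yield the announced local estimate. The key technical point throughout is ensuring \emph{uniformity} of all constants, both over the inclusion label $n$ and over the position of $B$: in cases (i) and (ii) this follows directly from the uniform regularity assumption on the rescaled inclusions, while in case (iii) one must additionally control the $L^q$ theory on the reference polytope under the $C^0$-coefficient perturbation coming from $\nabla\psi_n$, which is handled by a standard perturbation argument using the uniform modulus of continuity of $\nabla\psi_n^{\pm 1}$. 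The most delicate step is the small-inclusion regime $\diam(I_n)\ll 1$, where the covering proceeds at scale $\diam(I_n)$ and it must be checked that the $L^q$ theory produces constants uniform in the scale; this is precisely what the footnote to Assumption~\ref{ass2} flags as the origin of the non-scale-invariance, and is controlled by rescaling to unit scale, where the massive term becomes $\diam(I_n)^2\in(0,1)$ and acts as a compact perturbation of the pure Laplacian, uniformly in this range.
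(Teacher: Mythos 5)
Your overall strategy --- a bounded-multiplicity cover by interior and boundary balls, local Calder\'on--Zygmund estimates on each (Gilbarg--Trudinger in the $C^1$ case, Jerison--Kenig in the Lipschitz case, Grisvard/Maz'ya--Rossmann for convex polytopes), then summation --- is exactly the paper's. The gap is in the small-inclusion regime $\diam(I_n)\ll1$, which you correctly single out as the delicate step but for which your diagnosis of the difficulty, and hence your fix, is off target.

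The local estimate you state,
\[
\|\nabla u\|_{\Ld^q(B(x_i,r_i))}\,\le\,C\big(\|g\|_{\Ld^q(B(x_i,5r_i))}+\|\nabla u\|_{\Ld^2(B(x_i,5r_i))}\big),
\]
cannot hold with a constant uniform in $r_i$ when $q>2$: the two sides scale differently under dilation, and the correct scale-invariant form carries a factor $r_i^{d(\frac1q-\frac12)}$ in front of the $\Ld^2$ term (this is the factor $|D_i^n|^{\frac1q-\frac12}$ appearing in the paper's local estimates). At unit scale this factor is of order one, which is why the case $\diam(I_n)\ge1$ goes through; but if you cover at scale $r_i\sim\diam(I_n)=:D_n\ll1$, it blows up like $D_n^{-d(\frac12-\frac1q)}$, and summation only yields
\[
\|\nabla u\|_{\Ld^q(I_n)}\,\lesssim\,\|g\|_{\Ld^q(I_n)}+D_n^{-d(\frac12-\frac1q)}\|\nabla u\|_{\Ld^2(I_n)},
\]
which is not~\ref{ass2} with a uniform constant. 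The massive term is not the issue at all: a nonnegative zeroth-order coefficient in $(0,1]$ is uniformly harmless for the a priori estimate (and ``compact perturbation'' is in any case the wrong notion here --- compactness is relevant for solvability, not for uniform bounds).

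The missing ingredient is the global Dirichlet condition, which becomes available precisely in this regime since $\diam(I_n)\le1$ forces $I_n\subset2B$: testing the equation with $u\in H^1_0(I_n)$ gives the energy estimate $\|\nabla u\|_{\Ld^2(I_n)}\le\|g\|_{\Ld^2(I_n)}$, and H\"older's inequality on the small set $I_n$ gives $\|g\|_{\Ld^2(I_n)}\le|I_n|^{\frac12-\frac1q}\|g\|_{\Ld^q(I_n)}\lesssim D_n^{d(\frac12-\frac1q)}\|g\|_{\Ld^q(I_n)}$, whose prefactor exactly cancels the blow-up. This is how the paper closes the small-inclusion case (it rescales $I_n$ to unit diameter, runs the covering argument there, and then eliminates the $\Ld^2$ term by energy estimate plus Jensen, so that the resulting bound $\|\nabla u\|_{\Ld^q(I_n)}\lesssim\|g\|_{\Ld^q(I_n)}$ is scale-invariant and transfers back to $I_n$ with no loss). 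Without this step your argument does not close. A minor further point: your dilated balls must stay inside $2B$ rather than $3B$, since the equation and the control on $g$ are only given in $I_n\cap2B$; it suffices to take $5r_i\le1$ (the paper uses diameters $\le\frac23$ and dilation factor $\frac32$).
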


\begin{examples}\label{ex:ass}
The combination of the above two lemmas shows to what extent Theorem~\ref{th:main} extends the previous literature on homogenization of the double-porosity model. In~\cite{MR1993376,MR3902123}, inclusions are indeed assumed to be both uniformly bounded and uniformly separated from one another.
Many new models can now be covered, such as the following examples illustrated in Figure~\ref{fig:Examples1.8}.
\begin{enumerate}[(a)]
\item \emph{Random spherical inclusions with random radii:}\\
Consider a stationary ergodic random inclusion process $F_1=\cup_nI_n$ where the inclusions $I_n$'s are pairwise disjoint and where each $I_n$ is a ball with a random radius denoted by $r_n$. Consider the associated characteristic separation lengths
\[\nu_n\,:=\,\tfrac{\rho_n}{r_n},\qquad\text{in terms of}\quad\rho_n\,:=\,\min_{m:m\ne n}\dist(I_n,I_m),\]
%{\color{blue}{Comment on fait pour se debarrasser du $\min (\nu_n;1)$ ?}}
and assume that they satisfy the following moment bound, for some $\alpha>1$,
\begin{equation}\label{eq:mombound-nun+}
\limsup_{R\uparrow\infty}\frac1{|B_R|}\sum_{n:I_n\cap B_R\ne\varnothing}\nu_n^{-\alpha}<\infty,\qquad\text{almost surely}.
\end{equation}
Qualitatively, this requires having on average more empty space around larger inclusions.
By Lemmas~\ref{lem:ass1}(ii) and~\ref{lem:ass2}(i), we find that~\ref{ass1} and~\ref{ass2} hold for all $p=q'\in(1,\frac{2\alpha}{1+\alpha}]$, so Theorem~\ref{th:main} applies.
For $\alpha=\infty$, the above moment bound~\eqref{eq:mombound-nun+} reduces to the uniform separation condition $\rho_n\ge\frac1{C_0}r_n$, and we then find that~\ref{ass1} holds for~$p=2$ without loss of integrability.

\smallskip\noindent
In the special case when random radii are bounded from above and below, that is, $\inf_nr_n>0$ and $\sup_nr_n<\infty$ almost surely, then we can rather appeal to Lemma~\ref{lem:ass1}(iv): by convexity, no condition is then required on separation and we find that~\ref{ass1} and~\ref{ass2} hold automatically for all $p=q'\in(1,2\frac{d+1}{d+3})$.

\smallskip\item \emph{Random cylindrical inclusions with random lengths:}\\
Consider a stationary ergodic random inclusion process $F_2=\cup_nI_n$ where the inclusions $I_n$'s are pairwise disjoint and where each $I_n$ is the isometric image of a cylinder $B'(0,1)\times(-L_n,L_n)$ with unit width and random length $L_n\ge1$.
In this anisotropic setting, assume that the separation distances between inclusions satisfy the following moment bound, for some $\beta>1$,
\begin{equation}\label{eq:cond-dist cylinders+re}
\limsup_{R\uparrow\infty}\frac1{|B_R|}\sum_{n:I_n\cap B_R\ne\varnothing} \Big(\min_{m:m\ne n}\dist(I_n,I_m)\Big)^{-\beta}\,<\,\infty,\qquad\text{almost surely}.
\end{equation}
%{\color{blue}{Pareil ici, je ne comprends pas comment on se debarrasse du min avec $1$ }}
Note that this condition is independent of the random lengths $\{L_n\}_n$, hence is much weaker than~\eqref{eq:mombound-nun+}.
By Lemmas~\ref{lem:ass1}(ii) and~\ref{lem:ass2}(i), we find that~\ref{ass1} and~\ref{ass2} hold for all $p=q'\in(1,\frac{2\beta}{1+\beta}]$, so Theorem~\ref{th:main} applies.

\smallskip\noindent
Notably, for $\beta=\infty$, the condition~\eqref{eq:cond-dist cylinders+re} reduces to $\inf_{n\ne m}\dist(I_n,I_m)>0$ almost surely, and we then find that~\ref{ass1} holds for $p=2$ without loss of integrability --- which is a first to our knowledge for an inclusion process that does not satisfy the uniform separation condition~\eqref{eq:nun-infty}.

\smallskip\item \emph{Poisson inclusion processes:}\\
Various interesting inclusion processes can be constructed from a Poisson process $\{x_n\}_n$ on~$\R^d$ and can be checked to satisfy our assumptions:
\smallskip\begin{enumerate}[(c.1)]
\item Given an arbitrary stationary ergodic point process $\{x_n\}_n$ on $\R^d$, we can consider the associated inclusion process $F_3:=\cup_nI_n$ given by
\[I_n\,:=\,B(x_n,r_n),\qquad \text{with}\quad r_n:=\tfrac12\min_{m:m\ne n}\dist(x_n,x_m).\]
By Lemmas~\ref{lem:ass1}(iv) and~\ref{lem:ass2}(i),  we find that~\ref{ass1} and~\ref{ass2} hold at least for all $p=q'\in(1,2\frac{d+1}{d+3})$.
Note that by definition we have $\inf_{n\ne m}\dist(I_n,I_m)=0$ almost surely and that the inclusions can be arbitrarily large or small with positive probability if $\{x_n\}_n$ is for instance a Poisson process.

\smallskip
\item Given a Poisson process $\{x_n\}_n$ on $\R^d$ with intensity $\lambda$, consider the regularized clusters of unit balls around points of the process, as given e.g.\@ by
\begin{gather*}
F_4\,:=\,\mathcal O\big(\big\{x\in C+2B^\circ\,:\,\dist\big(x,C+2B^\circ\big)>1\big\}\big),\\
\text{in terms of}\quad C\,:=\,\bigcup_{n, m\atop0<|x_n-x_m|\le4}[x_n,x_m],
\end{gather*}
where $B^\circ:=B(0,1)$ stands for the unit ball at the origin and where for a set $S$ we define~$\mathcal O(S)$ to be the complement of the infinite connected component of the complement of $S$.
This is well-defined and nontrivial almost surely in the subcritical Poisson percolation regime, that is, provided that the intensity of the Poisson process is small enough, $0<\lambda<\lambda_c$.
By definition, the random set $F_4$ is uniformly of class $C^2$ and the distance between connected components is always $\ge2$.
In addition, diameters of connected components are known to have some finite exponential moments (see e.g.~\cite[Theorem~2]{DCRT-20}).
By Lemmas~\ref{lem:ass1}(ii) and~\ref{lem:ass2}(i), we find that~\ref{ass1} and~\ref{ass2} hold for all $p=q'\in(1,2)$.
\end{enumerate}

\smallskip\item \emph{Subcritical percolation clusters:}\\
For the inclusion process $F_5$ in the plane $\R^2$ given by subcritical percolation clusters in random chess structure as described in Lemma~\ref{lem:ass1}(v), further applying Lemma~\ref{lem:ass2}(ii), we find that~\ref{ass1} and~\ref{ass2} hold at least for all $p=q'\in[\frac43,2)$.
Note that in this setting the set $F_5$ is almost surely not Lipschitz due to black squares possibly touching only at a vertex.
This model could be extended to higher dimensions, but we skip the details.
\end{enumerate}
\end{examples}

%%%%%%%%%%%%%%%%%%%%%%%%%%%%%%%%%%%%%%%%%%%%%%%%%%%%%%%%%%%%%%%%%%%%%%%%%%%%%%%%%%%%%%%%%%%%%%%%%%%%%%%

\begin{figure}[htbp]
    \centering
    \begin{minipage}{0.45\textwidth}
        \centering
        \includegraphics[width=\textwidth, trim=0.3cm 0.5cm 0.5cm 1cm, clip]{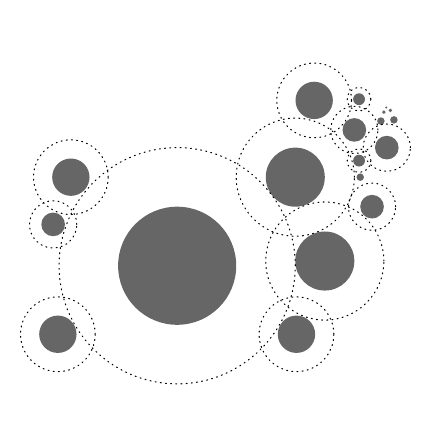}
        \caption*{(a) model $F_1$}
    \end{minipage}
    \hfill
    \begin{minipage}{0.45\textwidth}
        \flushright
        \includegraphics[width=\textwidth, angle=180, trim=0.cm 0cm 0.cm 0cm, clip]{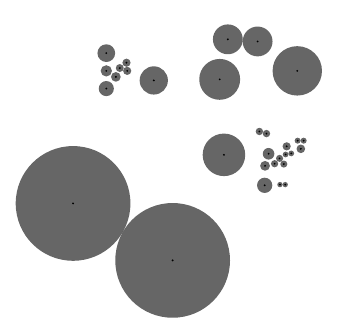}
        \caption*{(c.1) model $F_3$}
    \end{minipage}
    \begin{minipage}{1\textwidth}
        \centering
        \includegraphics[width=\textwidth, trim=0cm 0.2cm 0cm -0.5cm, clip]{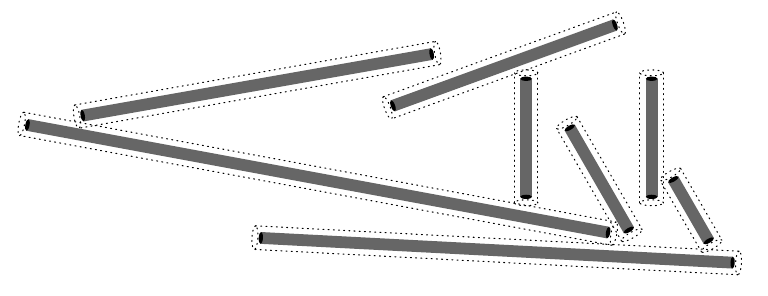}
        \caption*{(b) model $F_2$}
    \end{minipage}
    
    \vspace{1cm}
    \begin{minipage}{0.5\textwidth}
        \centering
        \includegraphics[width=\textwidth, trim=2cm 2cm 2cm 2cm, clip]{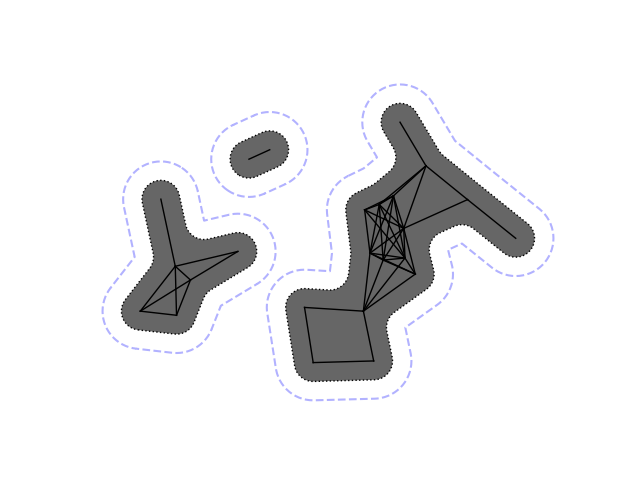}
        \caption*{(c.2) model $F_4$}
    \end{minipage}
    \hfill
    \begin{minipage}{0.45\textwidth}
        \centering
        \includegraphics[width=\textwidth, trim=0cm 0cm 0cm -0.5cm, clip]{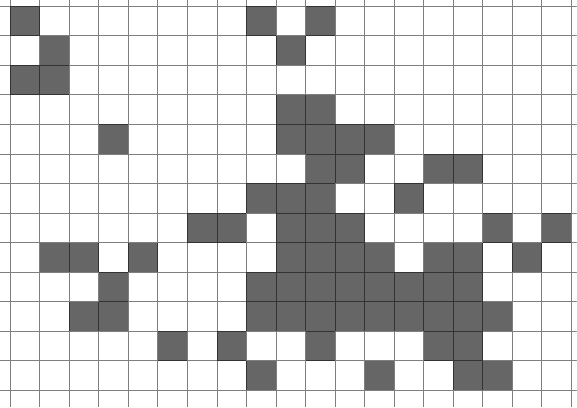}
        \caption*{(d) model $F_5$}
    \end{minipage}
\caption{This illustrates typical realizations for the different models constructed in Examples~\ref{ex:ass}, for which the assumptions of the qualitative homogenization result hold.}
\label{fig:Examples1.8}
\end{figure}

%%%%%%%%%%%%%%%%%%%%%%%%%%%%%%%%%%%%%%%%%%%%%%%%%%%%%%%%%%%%%%%%%%%%%%%%%%%%%%%%%%%%%%%%%%%%%%%%%%%%%%%

\section{Qualitative theory}\label{sec:qual}
This section is devoted to the proof of Theorems~\ref{th:main} and~\ref{th:corr-qual}. We start in Section~\ref{sec:compact} by proving a compactness result capturing the `resonant' behavior inside the inclusions for the solution $u_\e$ of the double-porosity problem~\eqref{eq:double-por-eqn}, in terms of the weak limit of $u_\e$ outside. Next, in Section~\ref{sec:erg-res}, we prove an ergodic theorem for this resonant description inside inclusions. Based on these two main ingredients, the proofs of Theorems~\ref{th:main} and~\ref{th:corr-qual} are postponed to Sections~\ref{sec:thmain} and~\ref{sec:thcorrqual}, respectively.

\subsection{Compactness result}\label{sec:compact}
The main technical ingredient is the following compactness result for the solution~$u_\e$ of the double-porosity problem~\eqref{eq:double-por-eqn}, which follows by combining the extension operator of Assumption~\ref{ass1} together with suitable `resonant' auxiliary problems inside the inclusions.
\begin{lem}\label{prop:sol-inside}
Let the random inclusion process $F=\cup_nI_n\subset \R^d$ satisfy Assumptions~\ref{ass0},
\ref{ass1}, and~\ref{ass2} for some $1<p \le 2$ and $q=p'$.
Given a bounded Lipschitz domain $D\subset\R^d$ and $f\in \Ld^2(D)$, for all $\e>0$,
let $u_\e\in H^1_0(D)$ be the solution of the double-porosity problem~\eqref{eq:double-por-eqn}.
Then, almost surely, there exists $\bar u\in W^{1,p}_0\cap L^2(D)$ such that, along a subsequence, we have
\begin{equation}\label{eq:estim-inside-Lp}
\lim_{\e \downarrow 0}\|u_\e-\bar u\|_{\Ld^p(D\setminus F_\e(D))}\,=\,0,\qquad\lim_{\e \downarrow 0}\|u_\e-\bar u-v_\e\|_{\Ld^p(F_\e(D))}\,=\,0,
\end{equation}
that is, in a more compact form, $\lim_{\e}\|u_\e-\bar u-\chi_\e v_\e\|_{\Ld^p(D)}=0$,
where $v_\e\in W^{1,p}_0(F_\e(D))$ is the solution of the auxiliary equation
\begin{equation}\label{eq:def-veps}
v_\e-\e^2\triangle v_\e=f-\bar u,\qquad\text{in $F_\e(D)$},
\end{equation}
and where we use the same notation for $F_\e(D)$ and $\chi_\e$ as in the statement of Theorem~\ref{th:main}.
\end{lem}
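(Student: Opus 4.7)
The plan is to combine the extension operator of Assumption~\ref{ass1}, which produces $L^p$-compactness for $u_\e$ outside the inclusions, with $L^p$-regularity estimates inside each inclusion, obtained by rescaling and duality from the $L^{p'}$-regularity provided by Assumption~\ref{ass2}.

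First, the standard energy estimate (testing \eqref{eq:double-por-eqn} against $u_\e$ itself) yields
\[
\|u_\e\|_{L^2(D)}^2 + \|\nabla u_\e\|_{L^2(D\setminus F_\e(D))}^2 + \e^2\|\nabla u_\e\|_{L^2(F_\e(D))}^2 \lesssim \|f\|_{L^2(D)}^2.
\]
Fix a ball $B\supset\bar D$ and view $u_\e\in H^1_0(D)$ as an element of $H^1_0(B)$ by zero extension. Set $\tilde u_\e := P_{B,\e} u_\e\in W^{1,p}_0(2B)$; by Assumption~\ref{ass1} this extension satisfies $\tilde u_\e = u_\e$ on $B\setminus \e F$ and is bounded in $W^{1,p}(2B)$. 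Rellich--Kondrachov extracts a (unrelabelled) subsequence along which $\tilde u_\e\to\bar u$ strongly in $L^p(2B)$ and weakly in $W^{1,p}(2B)$, with $\bar u\in W^{1,p}_0(D)$ (the zero trace on $\partial D$ survives because boundary-crossing inclusions occupy a vanishing-measure layer). Since $\tilde u_\e = u_\e$ on $D\setminus\e F$, this immediately gives $\|u_\e-\bar u\|_{L^p(D\setminus\e F)}\to 0$, and a short Hölder argument on the thin layer $\e F\cap D\setminus F_\e(D)$ of boundary-crossing inclusions (using the uniform $L^2$ bound on $u_\e$) upgrades this to $\|u_\e-\bar u\|_{L^p(D\setminus F_\e(D))}\to0$.

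For the ``resonant'' convergence inside, set $z_\e := u_\e - \tilde u_\e - v_\e$ on each $\e I_n\subset D$. Since $u_\e = \tilde u_\e$ on $\partial(\e I_n)\subset B\setminus\e F$ and $v_\e$ vanishes on $\partial F_\e(D)$, $z_\e$ has zero trace on $\partial(\e I_n)$. Using $u_\e - \e^2\Delta u_\e = f$ and $v_\e - \e^2\Delta v_\e = f-\bar u$ in $\e I_n$, one checks in weak form that
\[
\int_{\e I_n} z_\e\phi + \e^2\int_{\e I_n} \nabla z_\e\cdot\nabla\phi = \int_{\e I_n}(\bar u-\tilde u_\e)\phi - \e^2\int_{\e I_n}\nabla\tilde u_\e\cdot\nabla\phi, \qquad \forall\phi\in W^{1,p'}_0(\e I_n).
\]
Splitting $z_\e = z^{(1)}_\e + z^{(2)}_\e$ according to the two sources, rescaling $y=x/\e$ to bring each piece onto $I_n$, and invoking the $L^p$-regularity derived by duality from Assumption~\ref{ass2} (with $q=p'$) --- uniformly in $n$ thanks to the scale-invariant formulation on the rescaled inclusions --- yields
\[
\|z^{(1)}_\e\|_{L^p(F_\e(D))}\lesssim\|\bar u-\tilde u_\e\|_{L^p(F_\e(D))}\to 0,\qquad \|z^{(2)}_\e\|_{L^p(F_\e(D))}\lesssim\e\,\|\nabla\tilde u_\e\|_{L^p(D)}\to 0,
\]
where the extra $\e$-factor in the second bound comes out of the rescaling of the divergence-form source $-\e^2\nabla\tilde u_\e$. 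Combining with $\tilde u_\e\to\bar u$ in $L^p$ then gives $u_\e-\bar u-v_\e\to0$ in $L^p(F_\e(D))$.

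The main obstacle is the treatment of the source $\e^2\Delta\tilde u_\e$, which is only meaningful as an element of $W^{-1,p}$ since $\tilde u_\e\in W^{1,p}$ with possibly $p<2$: standard $L^2$ energy estimates are not available for the rescaled problem, and one must dualize the $L^{p'}$-gradient regularity of Assumption~\ref{ass2} to obtain the required $L^p$ solution estimate on each $I_n$. A secondary technical point is maintaining uniform constants across inclusions of possibly unbounded diameter, which is precisely what the scale-invariant hypotheses in Assumption~\ref{ass2} are designed to guarantee.
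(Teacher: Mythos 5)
Your strategy is the same as the paper's: extend $u_\e$ from $D\setminus F_\e(D)$ via $P_{B,\e}$, use Rellich to get an $L^p$-limit $\bar u$ outside the inclusions, and then observe that $r_\e:=u_\e-\tilde u_\e-v_\e$ lies in $W^{1,p}_0(F_\e(D))$ and solves $r_\e-\e^2\triangle r_\e=\bar u-\tilde u_\e+\e^2\triangle\tilde u_\e$, which after rescaling is controlled in $L^p$ by $\|\tilde u_\e-\bar u\|_{L^p}+\e\|\nabla\tilde u_\e\|_{L^p}$. Your weak formulation on each $\e I_n$, the splitting of the source into a zeroth-order and a divergence-form part, and the $\e$-gain from rescaling the divergence-form term are all exactly as in the paper (the paper extends $u_\e\mathds1_{D\setminus F_\e(D)}$ rather than $u_\e$ to sidestep the boundary-crossing inclusions, but your thin-layer H\"older argument handles the same issue).

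The one point that is not right as stated is your claim that the uniform-in-$n$ $L^p$ solution estimate on $I_n$ follows ``thanks to the scale-invariant formulation'' of Assumption~\ref{ass2}. That assumption is deliberately \emph{not} scale-invariant: it is formulated on unit balls only, reflecting the $O(1)$ massive term. Consequently the duality argument (estimating $\|\nabla z_h\|_{L^{p'}(I_n)}$ for the adjoint problem) is immediate only when $\diam(I_n)\le1$, where $I_n$ fits in a single unit ball and Poincar\'e applies. For inclusions with $\diam(I_n)\ge1$ --- which your setting allows --- one cannot rescale $I_n$ to unit size without destroying the zeroth-order term, and the uniform $L^{p'}$ gradient bound has to be assembled from local estimates on a covering of $I_n$ by unit balls, with the $L^2$ terms on overlapping balls absorbed via a Caccioppoli estimate with exponential weights $e^{-|\cdot-x_B|/M}$ and a bounded-overlap summation. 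This localization is the actual content of the paper's Lemma~\ref{lem:regp} (Substep~2.2) and is the missing ingredient in your proposal; everything else goes through as you describe.
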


Before moving on to the proof of this result, let us first note the following consequence of Assumption~\ref{ass2}.

\begin{lem}\label{lem:regp}
Let the random inclusion process $F=\cup_nI_n\subset\R^d$ satisfy Assumptions~\ref{ass0} and~\ref{ass2} with $q=p'$ for some $1<p\le 2$. Then, for all $n$ and all $f_1\in \Ld^p(I_n)$, $f_2\in \Ld^p(I_n)^d$, there is a unique weak solution $u\in W^{1,p}_0(I_n)$ of
\begin{equation}\label{e.regp0}
u-\triangle u=f_1+\Div(f_2),\qquad\text{in $I_n$},
\end{equation}
and it satisfies
\begin{equation}\label{e.regp}
\|u\|_{\Ld^p(I_n)}\,\lesssim\,\|f_1\|_{\Ld^p(I_n)}+\|f_2\|_{\Ld^p(I_n)},
\end{equation}
where the multiplicative constant does not depend on $n$.
\end{lem}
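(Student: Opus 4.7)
The plan is to prove the estimate~\eqref{e.regp} by duality, reducing it to a $W^{1,p'}$-regularity bound for the self-adjoint dual problem. Fix $\psi \in \Ld^{p'}(I_n)$ with unit norm and let $w \in H^1_0(I_n)$ be the Lax--Milgram solution of $w - \triangle w = \psi$ in $I_n$ (which exists since $I_n$ is bounded Lipschitz, so $\Ld^{p'}(I_n) \hookrightarrow H^{-1}(I_n)$ for each fixed $n$). Testing the equation for $u$ against $w$ yields the representation
\begin{equation*}
\int_{I_n} u\psi \,=\, \int_{I_n} f_1 w - \int_{I_n} f_2 \cdot \nabla w,
\end{equation*}
so by $(\Ld^p)^* = \Ld^{p'}$ it suffices to prove $\|w\|_{\Ld^{p'}(I_n)} + \|\nabla w\|_{\Ld^{p'}(I_n)} \lesssim \|\psi\|_{\Ld^{p'}(I_n)}$ with a constant independent of $n$. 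Existence and uniqueness of $u \in W^{1,p}_0(I_n)$ will then follow by a routine approximation (Lax--Milgram for smooth data, passage to the weak limit using the dual bound, and distributional identification of the limiting equation).

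The bound $\|w\|_{\Ld^{p'}(I_n)} \le \|\psi\|_{\Ld^{p'}(I_n)}$ is soft: test the dual equation against (a standard truncation of) $|w|^{p'-2}w$; the resulting quadratic terms are nonnegative, so H\"older on the right-hand side gives the claim. For the gradient bound, I would invoke~\ref{ass2} with $q = p'$. Fix a bounded-overlap covering $\{B_j\}_j$ of $I_n$ by unit balls (the scale dictated by~\ref{ass2}). On each $2B_j$ I recast the source in divergence form via the Newtonian potential: letting $\phi_j = G \ast (\psi \mathbf 1_{2B_j})$ so that $-\triangle\phi_j = \psi$ on $2B_j$, standard Calder\'on--Zygmund bounds for $\nabla^2\phi_j$ combined with Poincar\'e--Wirtinger on $2B_j$ (allowing to subtract from $\nabla\phi_j$ a constant vector, which does not affect its divergence) yield $\|g_j\|_{\Ld^{p'}(2B_j)} \lesssim \|\psi\|_{\Ld^{p'}(2B_j)}$, where $g_j := -\nabla\phi_j + c_j$. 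Since $w - \triangle w = \Div(g_j)$ on $2B_j$ and $w \in H^1_0(I_n)$, Assumption~\ref{ass2} gives
\begin{equation*}
\|\nabla w\|_{\Ld^{p'}(B_j \cap I_n)} \,\lesssim\, \|\psi\|_{\Ld^{p'}(2B_j)} + \|\nabla w\|_{\Ld^2(2B_j \cap I_n)}.
\end{equation*}

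To close the estimate, the local $\Ld^2$-norm of $\nabla w$ must be controlled by $\Ld^{p'}$-data on a slightly larger ball. A Caccioppoli-type computation does this: testing the dual equation against $w\eta_j^2$, with $\eta_j$ a smooth cutoff supported in $3B_j$ equal to $1$ on $2B_j$ (admissible since $w = 0$ on $\partial I_n$), and using Young's inequality, yields
\begin{equation*}
\|\nabla w\|_{\Ld^2(2B_j \cap I_n)}^2 \,\lesssim\, \|w\|_{\Ld^2(3B_j \cap I_n)}^2 + \|\psi\|_{\Ld^{p'}(3B_j)} \|w\|_{\Ld^p(3B_j \cap I_n)};
\end{equation*}
since $|3B_j| \simeq 1$ and $p \le 2 \le p'$, H\"older bounds both the $\Ld^2$- and $\Ld^p$-norms on $3B_j \cap I_n$ by the corresponding $\Ld^{p'}$-norms. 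Raising to the $p'$-th power, summing over $j$, and exploiting bounded overlap (the crucial point to avoid an $n$-dependent constant), we arrive at $\|\nabla w\|_{\Ld^{p'}(I_n)}^{p'} \lesssim \|\psi\|_{\Ld^{p'}(I_n)}^{p'} + \|w\|_{\Ld^{p'}(I_n)}^{p'} \lesssim \|\psi\|_{\Ld^{p'}(I_n)}^{p'}$, using the $\Ld^{p'}$-bound on $w$. The main obstacle is precisely this uniformity in $n$: the inclusions~$I_n$ can have arbitrarily large diameter, so a naive single-ball argument would produce an $n$-dependent constant. This is circumvented by the scale-$O(1)$ mass term in the equation, which makes the problem localizable at unit scale and allows the bounded-overlap summation at the very scale at which~\ref{ass2} is stated.
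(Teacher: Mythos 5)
Your proposal is correct and follows essentially the same route as the paper: duality, a bounded-overlap covering of $I_n$ by unit balls, recasting the dual data in divergence form via the Newtonian potential and Calder\'on--Zygmund so that Assumption~\ref{ass2} applies, and a Caccioppoli-type estimate to handle the local $\Ld^2$ energy term, with the zeroth-order term providing the unit scale that makes all constants independent of $n$. The only (harmless) deviations are that you also treat the $f_1$-contribution by duality (the paper instead tests directly with a truncation of $|u|^{p-2}u$), you avoid the paper's case distinction on $\diam(I_n)$, and your Caccioppoli closure absorbs the local norm of $w$ through the soft bound $\|w\|_{\Ld^{p'}}\le\|\psi\|_{\Ld^{p'}}$ rather than through the paper's exponentially weighted estimate --- all of which work.
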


\begin{proof}[Proof of Lemma~\ref{lem:regp}]
We focus on the proof of the estimate~\eqref{e.regp} in case $f_1\in L^2(I_n)$, $f_2\in L^2(I_n)^d$, while the existence part of the statement and the corresponding general estimate follow by an approximation argument.
By linearity, it is enough to consider separately the cases $f_2=0$ and  $f_1=0$.
We split the proof into two steps accordingly.

\medskip
\noindent
\step1 Case $f_2=0$.\\
Let $f_1\in L^2(I_n)$. For any $M,\epsilon>0$, testing equation~\eqref{e.regp0} with 
\[
u_{\e,M} \,:=\, \Big((|u|\wedge M)^2+\e^2\Big)^{\frac{p-2}{2}}\Big((u\wedge M)\vee (-M)\Big) ~\in~ H^1_0(I_n),
\]
we find 
\begin{multline*}
    \int_{I_n} \Big(\big((|u|\wedge M)^2+\e^2\big)^{\frac{p-2}{2}}+(p-2) (|u|\wedge M)^2\big((|u|\wedge M)^2+\e^2\big)^{\frac{p-4}{2}} \Big)|\nabla u|^2 \mathds1_{|u|<M}
    \\+ \int_{I_n}\big((|u|\wedge M)^2+\e^2\big)^{\frac{p-2}{2}}(|u|\wedge M)|u|
    \,
    =\,\int_{I_n}  f_1  u_{\e,M}.
\end{multline*}
Taking the limits $\e \downarrow 0$ then $M \uparrow\infty$, this entails by H\"older's inequality 
$$
(p-1)\int_{I_n}  |\nabla u |^2 |u|^{p-2}  +  \int_{I_n} |u|^p   = \int_{I_n} f_1 u|u|^{p-2}   \leq \| f_1  \|_{\Ld^p(I_n)} \|{u}\|_{\Ld^p(I_n)}^{p-1},
$$
and therefore 
\[\|{u} \|_{\Ld^p(I_n)} \leq\|{f_1 }\|_{\Ld^p(I_n)},\]
that is, \eqref{e.regp}.

\medskip\noindent
\step2 Case $f_1=0$.\\
We further distinguish between two cases, depending on the size of the diameter of $I_n$.

\medskip\noindent
\substep{2.1} Case $\diam(I_n)\le 1$.\\
Given $h\in C^\infty_c(I_n)^d$, let us consider the unique solution $z_h\in H^1_0(I_n)$ of
\begin{equation}\label{e.regp66}
z_h-\triangle z_h=\Div(h),\qquad\text{in $I_n$}.
\end{equation}
As $\diam(I_n)\le 1$, we can choose a unit ball $B$ such that $I_n\subset B$.
By Assumption~\ref{ass2} with $q=p'\ge2$, we then get
\[\|\nabla z_h\|_{\Ld^{p'}(I_n)}\,\lesssim\,\|h\|_{\Ld^{p'}(I_n)}+\|\nabla z_h\|_{\Ld^2(I_n)}.\]
By the energy estimate for $z_h$ and Jensen's inequality, with $\diam(I_n)\le1$, we can deduce
\begin{equation}\label{eq:estim-zh-lala}
\|\nabla z_h\|_{\Ld^{p'}(I_n)}\,\lesssim\,\|h\|_{\Ld^{p'}(I_n)}+\|h\|_{\Ld^2(I_n)}\,\lesssim\,\|h\|_{\Ld^{p'}(I_n)}.
\end{equation}
Now for the solution $u$ of~\eqref{e.regp0} with $f_1=0$, we have the duality identity
\[\int_{I_n}h\cdot\nabla u\,\stackrel{\eqref{e.regp66}}=\,
-\int_{I_n}uz_h-\int_{I_n}\nabla u\cdot\nabla z_h\,\stackrel{\eqref{e.regp0}}=\,
\int_{I_n}f_2\cdot\nabla z_h,\]
which entails
\begin{equation*}
\|\nabla u\|_{\Ld^p(I_n)}
\,=\,\sup\Big\{\int_{I_n}h\cdot\nabla u\,:\,\|h\|_{\Ld^{p'}(I_n)}=1\Big\}
\,\le\,\|f_2\|_{\Ld^p(I_n)}\sup\Big\{\|\nabla z_h\|_{\Ld^{p'}(I_n)}\,:\,\|h\|_{\Ld^{p'}(I_n)}=1\Big\},
\end{equation*}
and thus, by~\eqref{eq:estim-zh-lala},
\begin{equation*}
\|\nabla u\|_{\Ld^p(I_n)}
\,\lesssim\,\|f_2\|_{\Ld^p(I_n)}.
\end{equation*}
%{\color{red}This is also true without the condition on $\diam(I_n)$, this is just CZ -- but we had decided to simplify the assumption, one can always go back.}
By Poincar\'e's inequality with $\diam(I_n)\le 1$, we may then conclude
\[
\|u\|_{\Ld^{p}(I_n)} \,\lesssim\,\|\nabla u\|_{\Ld^{p}(I_n)}\,\lesssim\,\|f_2\|_{\Ld^p(I_n)},
\]
that is, \eqref{e.regp}.

\medskip\noindent
\substep{2.2}  Case $\diam(I_n)\ge 1$.\\
In this case, we need to further rely on the zeroth-order term in equation~\eqref{e.regp0}, which sets a scale in the problem.
Arguing again by duality, we first claim that~\eqref{e.regp} is a consequence of the  following: For all $f \in \Ld^{p'}(I_n)$ with $2 \leq p' < \infty$, the solution $v_f \in H^1_0(I_n)$ of
\begin{equation}\label{e.regp4}
v_f - \triangle v_f = f, \qquad \text{in $I_n$},
\end{equation}
satisfies
\begin{equation}\label{e.regp2}
\| \nabla v_f \|_{\Ld^{p'}(I_n)} \,\lesssim\, \| f \|_{\Ld^{p'}(I_n)}.
\end{equation}
This implication indeed follows from the duality identity
\[\int_{I_n}fu\,\stackrel{\eqref{e.regp4}}=\,\int_{I_n} uv_f+\nabla u \cdot \nabla v_f \,\stackrel{\eqref{e.regp0}}=\,-\int_{I_n}f_2\cdot\nabla v_f,\]
which yields
\[\|u\|_{\Ld^p(I_n)}\,=\,\sup\Big\{\int_{I_n}fu\,:\,\|f\|_{\Ld^{p'}(I_n)}=1\Big\}\,\le\,\|f_2\|_{\Ld^p(I_n)}\sup\Big\{\|\nabla v_f\|_{\Ld^{p'}(I_n)}\,:\,\|f\|_{\Ld^{p'}(I_n)}=1\Big\}.\]

We now turn to the proof of \eqref{e.regp2}, for which we proceed locally.
Since $\Ld^{p'}(I_n)\subset \Ld^2(I_n)$, for any unit ball $B$ with $I_n \cap B \ne \varnothing$, we may consider the solution $\phi_f \in H^1_0(2B)$  of
\begin{equation}\label{e.regp3}
\triangle \phi_f= \mathds{1}_{I_n \cap 2B} f,\qquad\text{in $\R^d$}.
\end{equation}
In these terms, we find $v_f-\triangle v_f = \Div(\nabla \phi_f)$ in $I_n\cap 2B$, and Assumption~\ref{ass2} for $q=p'$ entails 
\begin{equation*}
\| \nabla v_f \|_{\Ld^{p'}(I_n \cap B)}  \,\lesssim\,  \| \nabla \phi_f\|_{\Ld^{p'}(I_n \cap 2B)} + \| \nabla v_f \|_{\Ld^2(I_n \cap 2B)} .
\end{equation*}
By Poincar\'e's inequality in $2B$ for $\nabla \phi_f$ with $\int_{2B}\nabla \phi_f=0$, followed 
by Calder\'on-Zygmund estimates in $2B$,
we get
\[
\|\nabla \phi_f\|_{\Ld^{p'}(2B)}\,\lesssim\,
%\|\nabla \phi_f\|_{\Ld^{2}(2B)}+
\|\nabla^2 \phi_f\|_{\Ld^{p'}(2B)} \,\lesssim\, %\|f\|_{\Ld^{2}(I_n \cap 2B)}+
\|f\|_{\Ld^{p'}(I_n \cap 2B)} ,
%\,\lesssim\,\|f\|_{\Ld^{p'}(I_n \cap 2B)},
\]
hence
\begin{equation}\label{e.regp5}
\| \nabla v_f \|_{\Ld^{p'}(I_n \cap B)} \lesssim  \| f \|_{\Ld^{p'}(I_n \cap 2B)} + \| \nabla v_f \|_{\Ld^2(I_n \cap 2B)}.
\end{equation}
It remains to control the second right-hand side term, and we claim that for some $M\lesssim 1$ large enough (not depending on $I_n$) we have the Caccioppoli-type estimate
\begin{equation}\label{e.regp6}
 \| \nabla v_f \|_{\Ld^2(I_n \cap 2B)} \lesssim \Big\| e^{-\frac{|\cdot-x_B|}{M}}f \Big\|_{\Ld^{p'}(I_n)},
\end{equation}
where $x_B$ stands for the center of $B$. Before concluding the argument, let us prove this estimate.
Setting $\eta:=e^{-\frac{|\cdot- x_B|}{M}}$, and testing~\eqref{e.regp4} with  $\eta^2 v_f \in H^1_0(I_n)$, we find
\[
\int_{I_n} \eta^2 v_f^2 +\int_{I_n}\eta^2 |\nabla v_f|^2 \lesssim \int_{I_n} \eta^2 |v_f|| f|+\int_{I_n} \eta |v_f| |\nabla \eta | |\nabla v_f| ,
\]
which entails, for $M\lesssim1$ large enough, by Young's inequality and by the property $|\nabla \eta| \le \frac 1M \eta$ for the exponential,
\[
\int_{I_n}\eta^2 v_f^2 +\int_{I_n}\eta^2 |\nabla v_f|^2 \lesssim \int_{I_n}\eta^2 f^2.
\]
Since $\eta \gtrsim 1$ on $I_n \cap 2B$, the left-hand side controls in particular $\int_{I_n \cap 2B} |\nabla v_f|^2$. Using H\"older's inequality to control the right-hand side, we then get
\[\int_{I_n\cap2B}|\nabla v_f|^2\, \lesssim\, \int_{I_n}\eta^2 f^2
\lesssim\, \Big(\int_{I_n}\eta^2\Big)^{1-\frac{2}{p'}}\Big(\int_{I_n} \eta^2|f|^{p'}\Big)^\frac2{p'},\]
which proves~\eqref{e.regp6}.

With~\eqref{e.regp5} and~\eqref{e.regp6} at hand, we are now in position to conclude. For that purpose, let us cover~$I_n$ with a finite union of unit balls $B_i := B(x_i,1)$ such that
$$
\sup_i \sharp \left\{ j \, : \, B_i \cap B_j \neq \varnothing \right\} \,\lesssim\,1.
$$
Summing the contributions of the norm of $v_f$ in each $B_i$, and combining~\eqref{e.regp5} and~\eqref{e.regp6}, we find
\begin{equation*}
\| \nabla v_f \|^{p'}_{\Ld^{p'}(I_n)}
\,\lesssim\,\sum_{i}   \Big(  \| f \|^{p'}_{\Ld^{p'}(I_n \cap 2B_i)} +  \left\| e^{-\frac{|\cdot-x_i|}{M}}  f \right\|^{p'}_{\Ld^{p'}(I_n)} \Big)  
\,\lesssim\, \|f\|^{p'}_{\Ld^{p'}(I_n)},
\end{equation*}
where we used that $\sum_i (e^{-\frac{\lvert x_i - \cdot \rvert}{M}}+\mathds1_{I_n \cap 2B_i}) \lesssim 1$. This proves~\eqref{e.regp2}, hence~\eqref{e.regp}.
\end{proof}

With the above elliptic regularity result at hand, we are now in position to prove the key compactness result of Lemma~\ref{prop:sol-inside}.

\begin{proof}[Proof of Lemma~\ref{prop:sol-inside}]
Given a ball $B$ that contains the domain $D$, and extending $u_\e$ by $0$ on $B\setminus D$, the extension property of Assumption~\ref{ass1} allows to consider\footnote{Since $F_\e(D)\subset (\e F)\cap D$ does not necessarily coincide with $(\e F)\cap D$ due to inclusions possibly intersecting the boundary~$\partial D$, we have to replace the extension by the identity in $((\e F)\cap D) \setminus F_\e(D)$, which does not change the bounds.}
\[\hat u_\e\,:=\,P_{B,\e} (u_\e \mathds 1_{D \setminus F_\e(D)})|_D\,\in \,W^{1,p}_0(D),\]
which satisfies $\hat u_\e|_{D\setminus F_\e(D)}=u_\e|_{D\setminus F_\e(D)}$ and
\begin{equation}\label{eq:estim-hatueps-ext}
\| \nabla\hat u_\e\|_{L^{p}(D)} \lesssim \|\nabla u_\e\|_{L^2(D \setminus F_\e(D))}.
\end{equation}
By the a priori estimate for $u_\e$ and by Poincar\'e's inequality, this entails that $\hat u_\e$ is bounded in $W^{1,p}_0(D)$, uniformly on the probability space. Almost surely, by Rellich's theorem, there exists $\bar u\in W^{1,p}_0(D)$ such that $\hat u_\e\to\bar u$ in $\Ld^p(D)$ along a subsequence (not relabeled). Hence, by the extension property, along this subsequence,
\begin{equation}\label{eq:plop1}
\|u_\e-\bar u\|_{\Ld^p(D\setminus F_\e(D))}\,\le\,\|\hat u_\e-\bar u\|_{\Ld^p(D)}\,\xrightarrow{\e\downarrow0}\,0.
\end{equation}
It remains to estimate $u_\e-\bar u-v_\e\in W^{1,p}(F_\e(D))$, where $v_\e$ is the solution of the auxiliary problem~\eqref{eq:def-veps}. By the triangle inequality, 
\begin{equation}\label{eq:plop2}
\|u_\e-\bar u-v_\e\|_{\Ld^p(F_\e(D))} \le \|u_\e-\hat u_\e-v_\e\|_{\Ld^p(F_\e(D))}+\|\hat u_\e-\bar u\|_{\Ld^p(D)}.
\end{equation}
Since $r_\e:=u_\e-\hat u_\e-v_\e$ belongs to $W^{1,p}_0(F_\e(D))$ and satisfies 
\begin{equation*}
r_\e-\e^2\triangle r_\e =\bar u-\hat u_\e+\e^2\triangle \hat u_\e,\qquad\text{in $F_\e(D)$},
\end{equation*}
Lemma~\ref{lem:regp} together with a rescaling argument yields
\[
\|r_\e\|_{\Ld^p(F_\e(D))} \,\lesssim\, \|\hat u_\e-\bar u\|_{\Ld^p(D)}
+\e \|\nabla \hat u_\e\|_{\Ld^p(D)}. 
\]
Combined with~\eqref{eq:estim-hatueps-ext}, \eqref{eq:plop1}, and~\eqref{eq:plop2}, this concludes the proof of the claimed convergences~\eqref{eq:estim-inside-Lp}.

It remains to check that almost surely the extracted limit $\bar u\in W^{1,p}_0(D)$ actually belongs to $L^2(D)$ (note that this does not follow from the Sobolev embedding for small $p$). For $\psi\in C^\infty_c(D)$, recalling that $u_\e=\hat u_\e$ in $D\setminus F_\e(D)$ and using that $\mathds1_{F_\e(D)}=\mathds1_{\e F}$ in the support of $\psi$ for $\e$ small enough, we can estimate
\begin{eqnarray*}
\bigg|\int_D\psi\Big(\mathds1_{D\setminus F_\e(D)}u_\e-(1-\E[\mathds1_F])\bar u\Big)\bigg|
&\le&\bigg|\int_D\mathds1_{D\setminus F_\e(D)}\psi(\hat u_\e-\bar u)\bigg|+\bigg|\int_D(\mathds1_{F_\e(D)}-\E[\mathds1_F])\psi\bar u\bigg|\\
&\le&\|\psi\|_{L^{p'}(D)}\|\hat u_\e-\bar u\|_{L^p(D)}+\bigg|\int_D(\mathds1_F(\tfrac\cdot\e)-\E[\mathds1_F])\psi\bar u\bigg|.
\end{eqnarray*}
By definition of $\bar u$, the first right-hand side term tends to $0$ as $\e\downarrow0$ along the chosen subsequence, while the ergodic theorem ensures that for a typical realization the second right-hand side term also tends to $0$. We conclude that $\mathds1_{D\setminus F_\e(D)}u_\e\to(1-\E[\mathds1_F])\bar u$ in the distributional sense on $D$. As the energy estimate for $u_\e$ ensures that $\|u_\e\|_{L^2(D)}\lesssim1$, and as $\E[\mathds1_F]<1$ by Assumption~\ref{ass0}, this entails that~$\bar u$ belongs to~$L^2(D)$.
\end{proof}

\subsection{Ergodic theorem for resonant equations}\label{sec:erg-res}
We show how the ergodic theorem can be applied to compute the limit of the solution $v_\e$ of the auxiliary problem~\eqref{eq:def-veps}, which is viewed as a locally stationary random field. This would be a straightforward consequence of the ergodic theorem together with Poincar\'e's inequality on each inclusion, but some careful approximation argument is needed to avoid additional requirements on the geometry and on the diameters of the particles.

\begin{lem}\label{lem:control Poincare constants}
With the same assumptions and notation as in Lemma~\ref{prop:sol-inside}, the auxiliary solution $v_\e$ satisfies almost surely
\begin{equation}\label{eq:conv-veps-lem}
v_\e - v(\tfrac\cdot\e) (f-\bar u) \to 0, \quad \text{strongly in $\Ld^2(D)$},
\end{equation}
where $v\in H^1_0(F)$ is the unique almost sure weak solution of~\eqref{e.999}, which is a stationary random field. In particular, almost surely,
\[v_\e \cvf \E[v] (f-\bar u), \quad \text{weakly in $\Ld^2(D)$},\]
\end{lem}
\begin{proof}
We focus on the proof of~\eqref{eq:conv-veps-lem}, while the other part of the statement is an immediate consequence of the ergodic theorem for the stationary random field $v$.
We proceed in three steps.

\medskip
\step1 Regularization of the source term.\\
For $\delta >0$, let $\eta_\delta:=\delta^{-d}\eta(\tfrac\cdot\delta)$ be the rescaling of some mollifier $\eta\in C^\infty_c(\R^d)$ with $\eta\ge0$ and $\int_{\R^d}\eta=1$,
and consider the almost sure solution $v_\e^\delta \in H^1_0(F_\e(D))$ of
\[v_\e^\delta - \e^2 \Delta v_\e^\delta = \eta_\delta\ast(f-\bar u ),\qquad\text{in $F_\e(D)$}.\]
The energy estimate for the difference $v_\e-v_\e^\delta$ yields almost surely
\[\|v_\e-v_\e^\delta\|_{L^2(D)}\,\le\,\|(f-\bar u)-\eta_\delta\ast(f-\bar u )\|_{L^2(D)}.\]
Further using the fact that $0 \leq v \leq 1$ almost everywhere in $F$, we deduce for all $\delta >0$,
\begin{multline*}
\limsup_{\e\downarrow0} \| v_\e - v(\tfrac\cdot\e)(f-\bar u) \|_{L^2(D)}\\
\,\le\, 2\| (f - \bar u) - \eta_\delta\ast(f-\bar u) \|_{\Ld^2(D)} + \limsup_{\e\downarrow0} \big\| v_\e^\delta - v(\tfrac\cdot\e)\big(\eta_\delta\ast(f-\bar u)\big) \big\|_{\Ld^2(D)}.
\end{multline*}
As $f\in L^2(D)$ by assumption and as $\bar u\in L^2(D)$ by Lemma~\ref{prop:sol-inside}, the first right-hand side term tends to~$0$ as $\delta\downarrow0$. Hence, it remains to prove that the second limit also vanishes for any $\delta>0$.
In particular, this means that we can assume without loss of generality $f - \bar u \in W^{1,\infty}(D)$.

\medskip
\step2 Reduction to uniformly bounded diameters.\\
By Step~1, we can assume $f - \bar u \in W^{1,\infty}(D)$.
For fixed $L>0$, we introduce the decimated inclusion process
\[F^L \,:=\, \bigcup_{n: \diam (I_n) \leq L}I_n\,\subset\, F,\]
which remains a stationary ergodic random inclusion process like~$F$, and we also define its rescaling restricted to~$D$,
\[
F^L_\e(D) \,:=\, \bigcup_{n:\diam(I_n)\le L\atop \e I_n \subset D}\e I_n\,=\, F_\e(D)\cap(\e F^L).
\]
In these terms, consider the unique almost sure solutions $v^L \in H^1_0(F^L)$ and $v_\e^L \in H^1_0(F_\e^L(D))$ (both extended to zero as $H^1_0(D)$ functions) of
\[v^L - \Delta v^L = 1,\quad\text{in $F^L$},\qquad \text{and}\qquad v_\e^L - \e^2 \Delta v_\e^L = f-\bar u,\quad\text{in $F_\e^L(D)$}.\]
By the ergodic theorem together with the facts that $v_\e=v_\e^L$ in $F^L_\e(D)$ and that $|v_\e|\le\|f-\bar u\|_{L^\infty(D)}$ almost everywhere in $F_\e(D)$ by the maximum principle, we find almost surely
\begin{eqnarray*}
\limsup_{\e\downarrow0}\|v_\e-v_\e^L\|_{L^2(D)}
&=&\limsup_{\e\downarrow0}\bigg(\sum_{\e I_n\subset F_\e(D)\setminus F^L_\e(D)}\|v_\e\|_{L^2(\e I_n)}^2\bigg)^\frac12\\
&\le&\|f-\bar u\|_{L^\infty(D)}\limsup_{\e\downarrow0}\bigg(\sum_{\e I_n\subset F_\e(D)\setminus F^L_\e(D)}|\e I_n|\bigg)^\frac12\\
&\le&\|f-\bar u\|_{L^\infty(D)}\limsup_{\e\downarrow0}\bigg(\int_D\mathds1_{F\setminus F^L}(\tfrac\cdot\e)\bigg)^\frac12\\
&=&|D|^\frac12\|f-\bar u\|_{L^\infty(D)}\E[\mathds1_{F\setminus F^L}]^\frac12.
\end{eqnarray*}
Arguing similarly for $v(\tfrac\cdot\e)-v^L(\tfrac\cdot\e)$, now using the facts that $v=v^L$ in $F^L$ and that $|v|\le1$ almost everywhere in $F$, we are led to the following, for all $L >0$, almost surely,
\begin{multline}\label{eq:reduc-veps-L}
\limsup_{\e\downarrow0} \| v_\e - v(\tfrac\cdot\e)(f-\bar u) \|_{L^2(D)}\\
\,\le\, 2 |D|^{1/2} \| f - \bar u \|_{\Ld^\infty(D)}  \E[\mathds{1}_{F \setminus F^L}]^\frac12 +\limsup_{\e\downarrow0} \| v_\e^L - v^L(\tfrac\cdot\e)(f-\bar u) \|_{\Ld^2(D)}.
\end{multline}
Now note that the first right-hand side term tends to $0$ as $L\uparrow\infty$: indeed, by definition of $F^L$ and by dominated convergence, only using that $\diam(I_n)<\infty$ almost surely for all $n$ (see Assumption~\ref{ass0}), we find
\[\E [\mathds{1}_{F \setminus F^L}]\,=\,\E\bigg[\sum_n\mathds1_{I_n}\mathds1_{\diam(I_n)\ge L}\bigg] \to 0,\qquad\text{as $L \uparrow \infty$}.\]
Hence, it only remains to show that the second limit in~\eqref{eq:reduc-veps-L} vanishes for any $L>0$. In particular, this means that we can further assume without loss of generality
\begin{equation}\label{eq:sup-rad}
r_0\,:=\,\sup_n \diam(I_n) < \infty, \quad \text{almost surely}.
\end{equation}

\medskip
\step3 Conclusion.\\
By Step~1 we can assume $f-\bar u\in W^{1,\infty}(D)$ and by Step~2 we can assume that the inclusions are uniformly bounded in the sense of~\eqref{eq:sup-rad}.
For fixed $\delta>0$, we can cover the set $D_{\delta}:= \{ x \in D:\dist(x,\partial D)>\delta \}$ with a finite collection of $N_\delta$ balls $B(x_i,\frac12\delta)$ of radius $\frac12\delta$ centered at some points $x_i \in D_{\delta}$, for $1\le i\le N_\delta$, with $N_\delta$ only depending on $d,D,\delta$, in such a way that
\[\sup_{1\le i\le N_\delta} \sharp \{ j :B(x_i,\delta) \cap B(x_j,\delta) \neq \varnothing \}\,\le\,M\]
is bounded by a finite number $M$ only depending on $d$ (and not on $D,\delta$).
%Note that  $Q_\delta(x_i) \Subset B(x_i,\delta)$ for all $i$.
%Using the fact that $r:=\sup_n \diam(I_n) < \infty $ almost surely, 
By the covering property, for each inclusion $\e I_n \subset D_\delta$, there exists $1\le i\le N_\delta$ such that $\e I_n \cap B(x_i,\frac12\delta) \neq \varnothing$, and thus, using the assumed uniform boundedness of the inclusions, cf.~\eqref{eq:sup-rad}, we have for $0<\e<\delta/(2r_0)$,
\[
\e I_n \,\subset\, B(x_i,\tfrac12\delta) + B(0,r_0\e) \,\subset\, B(x_i,\delta)\, \Subset\, D.
\]
Hence, with the notation $F_\e(B(x_i,\delta)) := \bigcup_{n :\e I_n \subset B(x_i,\delta)} \e I_n $, we infer that 
\[
D \cap \e F \subset \bigg( \bigcup_{i=1}^{N_\delta} F_\e(B(x_i,\delta)) \bigg) \cup (D \setminus D_{2\delta}),
\]
and we can then bound
\begin{multline*}
\| v_\e - v(\tfrac\cdot\e)(f-\bar u)\|_{L^2(D)}^2
\, \lesssim\,|D \setminus D_{2\delta}|\| f - \bar u \|_{L^\infty(D)}^2\\
+ \sum_{i=1}^{N_\delta} \int_{F_\e(B(x_i,\delta))} \bigg( \Big|f - \bar u - \fint_{B(x_i,\delta)}(f-\bar u) \Big|^2 + \Big| v_\e - v(\tfrac\cdot\e) \fint_{B(x_i,\delta)} (f-\bar u) \Big|^2 \bigg).
\end{multline*}
Using an energy estimate for $v_\e - v(\tfrac\cdot\e) \fint_{B(x_i,\delta)} (f-\bar u)$ on each inclusion $\e I_n\subset B(x_i,\delta)$, using again the fact that $|v|\le1$ almost everywhere, and using Poincar\'e's inequality in $B_\delta(x_i,\delta)$, we are led to
\begin{eqnarray*}
\limsup_{\e\downarrow0}\| v_\e - v(\tfrac\cdot\e)(f-\bar u)\|_{L^2(D)}^2
&\lesssim&|D \setminus D_{\delta}|\|f-\bar u \|_{L^\infty(D)}^2 +\sum_{i=1}^{N_\delta} \int_{B(x_i,\delta)} \Big|f - \bar u - \fint_{B(x_i,\delta)}(f-\bar u) \Big|^2\\
& \lesssim& |D \setminus D_{\delta}|\|f-\bar u \|_{L^\infty(D)}^2 +M\delta^2|D| \|\nabla (f-\bar u)\|^2_{L^\infty(D)}.
\end{eqnarray*}
As the right-hand side tends to $0$ as $\delta\downarrow0$, the conclusion~\eqref{eq:conv-veps-lem} follows.
\end{proof}

\subsection{Proof of Theorem~\ref{th:main}}\label{sec:thmain}
Combining the extension property of Assumption~\ref{ass1} together with the compactness result of Lemma~\ref{prop:sol-inside}, we can control the solution inside inclusions by its value outside, and we may then proceed to the proof of Theorem~\ref{th:main} with Tartar's method of oscillating test functions. In order to accommodate the mere $\Ld^p$ control inside inclusions given by Lemma~\ref{prop:sol-inside}, we use approximate correctors in Tartar's argument, as well as a truncation argument. The use of approximate correctors raises some interesting subtleties, and we have to unravel some hidden monotonicity to conclude. To prove that $\bar u$ is the solution of the homogenized problem, we also rely on Lemma~\ref{lem:control Poincare constants}.

Let us first recall that the energy estimates for the solution $u_\e$ of the double-porosity problem~\eqref{eq:double-por-eqn} yield the following uniform a priori bounds,
\[\|u_\e\|_{\Ld^2(D)} \lesssim1,\qquad\|(1-\chi_\e+\e \chi_\e)\nabla u_\e\|_{\Ld^2(D)} \lesssim 1.\]
Appealing to Lemma~\ref{prop:sol-inside}, for a given typical realization, there exists $\bar u\in W^{1,p}_0(D)$ such that, along a subsequence (not relabeled), we have
\begin{equation}\label{eq:compact-baru}
\|u_\e-\bar u\|_{\Ld^p(D\setminus F_\e(D))}\to0,\qquad\|u_\e-\bar u-v_\e\|_{\Ld^p(F_\e(D))}\to0,
\end{equation}
where $v_\e$ is defined in Lemma~\ref{prop:sol-inside}. Note that at this stage we do not know yet that $\bar u$ satisfies an equation, nor even that $ \bar u \in H^1_0(D)$ since $1<p \le2$. This will only be deduced at the end of the proof.
Another preliminary comment: when using the corrector $\varphi$ defined in Lemma~\ref{lem:corr0} to prove homogenization by the classical method of oscillating test functions, we face delicate issues related to the lack of integrability of correctors; to circumvent this issue, it is convenient to replace correctors by $\e$-dependent approximations that are adapted to the structure of the double-porosity problem.
We split the proof into four main steps, first defining suitable approximate correctors and establishing their convergence properties, before turning to the proof of homogenization by means of oscillating test functions combined with the a priori resonant description~\eqref{eq:compact-baru} inside inclusions.

\medskip
\step1 Approximate stationary correctors.\\
For $\e>0$ and $1\le i\le d$, we define $\varphi_{\e i}\in H^1_\loc(\R^d;\Ld^2(\Omega))$ as the unique stationary weak solution of 
\begin{equation}\label{eq:approx-cor-eps}
\e^2 \varphi_{\e i}-\nabla \cdot (\mathds1_{\R^d\setminus F}+\e^2 \mathds1_F) (e_i+\nabla \varphi_{\e i})=0,\qquad\text{in $\R^d$},
\end{equation}
and we then set $\varphi_\e:=\{\varphi_{\e i}\}_{1\le i\le d}$.
We emphasize that the existence of a stationary solution $\varphi_{\e i}$ to the above equation is ensured by uniqueness thanks to the massive term.
We show that this approximate corrector~$\varphi_\e$ satisfies the following properties,
\begin{equation}\label{eq:lim-masscorr}
\lim_{\e\downarrow0}\Big(\e^2\E[|\varphi_\e|^2]+\E[\mathds1_{\R^d \setminus F} |\nabla \varphi_{\e}-\nabla \varphi|^2]+\e^2\E[\mathds1_{F}|\nabla\varphi_\e|^2]\Big)\,=\,0.
\end{equation}
We split the proof into two further substeps.

\medskip
\substep{1.1} Weak compactness: proof of
\begin{equation}\label{eq:weak-conv-phieps-phi}
\mathds1_{\R^d\setminus F}\nabla\varphi_{\e}\cvf\mathds1_{\R^d\setminus F}\nabla\varphi,\qquad\text{weakly in $\Ld^2(\Omega)$.}
\end{equation}
Let $1\le i\le d$ be fixed. First note the following a priori estimate for $\varphi_{\e i}$,
\begin{equation}\label{eq:apriori-phieps}
\e^2\E[|\varphi_{\e i}|^2]+\E[\mathds1_{\R^d\setminus F}|\nabla\varphi_{\e i}|^2]+\e^2\E[\mathds1_F|\nabla\varphi_{\e i}|^2]\,\lesssim\,1.
\end{equation}
By Assumption~\ref{ass1},
as $\mathds1_{\R^d\setminus F}\nabla\varphi_{\e i}$ is bounded in $\Ld^2(\Omega)$, we can construct a stationary field $\hat\varphi_{\e i}:=P\varphi_{\e i}\in W^{1,p}_\loc(\R^d;\Ld^p(\Omega))$ such that $\hat \varphi_{\e i}=\varphi_{\e i}$ in $\R^d\setminus F$ and such that $\nabla\hat\varphi_{\e i}$ is bounded in $\Ld^p(\Omega)$.
Hence, by weak compactness, there is a stationary potential random field $\Phi_i\in\Ld^p(\Omega)$ with $\E[\Phi_i]=0$ such that $\nabla\hat\varphi_{\e i}\cvf\Phi_i$ in $\Ld^p(\Omega)$. By the definition of the extension, this implies $\mathds1_{\R^d\setminus F}\nabla\varphi_{\e i}=\mathds1_{\R^d\setminus F}\nabla\hat\varphi_{\e i}\cvf\mathds1_{\R^d\setminus F}\Phi_i$ in $\Ld^p(\Omega)$. By the boundedness of $\mathds1_{\R^d\setminus F}\nabla\varphi_{\e i}$ in $\Ld^2(\Omega)$, this weak convergence must actually hold in~$\Ld^2(\Omega)$.
Testing the corrector equation~\eqref{eq:approx-cor-eps} for $\varphi_{\e i}$ with some stationary field $\psi \in \Ld^2(\Omega;H^1_\loc(\R^d))$, passing to the limit $\e \downarrow 0$, and using the a priori estimates~\eqref{eq:apriori-phieps}, we get
\[
\expec{\nabla \psi \cdot (\Phi_{i}+e_i) \mathds1_{\R^d \setminus F}}=0,
\]
and therefore, by density, for all stationary potential random fields $\Psi\in \Ld^2(\Omega)$ with $\E[\Psi]=0$,
\[
\expec{\Psi \cdot (\Phi_{i}+e_i) \mathds1_{\R^d \setminus F}}=0.
\]
As this is the weak formulation of equation~\eqref{eq:corr} in the probability space, we conclude by uniqueness that $\mathds1_{\R^d \setminus F}\Phi_i =\mathds1_{\R^d \setminus F}\nabla \varphi_i $, thus concluding the claimed weak convergence~\eqref{eq:weak-conv-phieps-phi}.

\medskip
\substep{1.2} Proof of~\eqref{eq:lim-masscorr}.\\
We appeal to an energy argument. From the equations for $\varphi_{\e i}$ and $\varphi_i$, we find respectively
\begin{eqnarray*}
\e^2\E[|\varphi_{\e i}|^2]+\e^2\E[\mathds1_F|\nabla\varphi_{\e i}|^2]+\E[\mathds1_{\R^d\setminus F}|\nabla\varphi_{\e i}|^2]&=&-e_i\cdot\E[\mathds1_{\R^d\setminus F}\nabla\varphi_{\e i}]-\e^2e_i\cdot\E[\mathds1_F\nabla\varphi_{\e i}],\\
\E[\mathds1_{\R^d\setminus F}|\nabla\varphi_i|^2]&=&-e_i\cdot\E[\mathds1_{\R^d\setminus F}\nabla\varphi_i],
\end{eqnarray*}
and thus,
\begin{multline*}
\e^2\E[|\varphi_{\e i}|^2]+\e^2\E[\mathds1_F|\nabla\varphi_{\e i}|^2]+\E[\mathds1_{\R^d\setminus F}|\nabla\varphi_{\e i}-\nabla\varphi_i|^2]\\
\,=\,-\E[\mathds1_{\R^d\setminus F}(2\nabla\varphi_i+e_i)\cdot(\nabla\varphi_{\e i}-\nabla\varphi_i)]-\e^2e_i\cdot\E[\mathds1_F\nabla\varphi_{\e i}].
\end{multline*}
From the weak convergence established in Step~1.1, we can pass to the limit in the right-hand side and the conclusion~\eqref{eq:lim-masscorr} follows.

\medskip
\step2 Approximate Dirichlet correctors.\\
In order to obtain an almost sure homogenization result, the almost sure sublinearity of correctors is well-known to be a key step. When using approximate correctors of Step~1, this would require to prove the strong convergence $\e\varphi_\e(\tfrac\cdot\e)\to0$ in $L^2_\loc(\R^d)$ almost surely as $\e\downarrow0$, and in order to remove the approximation we would further need $\nabla\varphi_\e(\frac\cdot\e)-\nabla\varphi(\frac\cdot\e)\to0$ in $L^2_\loc(\R^d)$ almost surely.
As shown in Step~1, these two convergences are known to hold in $\Ld^2(\Omega)$, hence in $L^2_\loc(\R^d)$ in probability. In order to obtain almost sure convergences, we would typically need to combine this with an application of the ergodic theorem for large-scale averages. However, this would require to consider a diagonal limit combining both large-scale averaging and the small-mass limit, which is a priori out of reach since we are deprived of any quantitative statement. To get around this problem, we shall rather make use of the approximate corrector with {\it Dirichlet} boundary conditions on $D/\e$.

More precisely, for any $\e>0$ and $1\le i\le d$, we define $\varphi_{\e i}^\circ\in H^1_0(\frac1\e D;L^2(\Omega))$ as the unique almost sure weak solution of the following approximate corrector equation with Dirichlet boundary conditions,\footnote{Although considering an equation in $D/\e$, note that we take coefficient $\mathds1_{\R^d\setminus F}+\e^2\mathds1_F$ instead of $1-\chi_\e+\e^2\chi_\e$, which will be convenient in Step~2.1 below.}
\begin{equation}\label{eq:approx-cor-eps0}
\e^2 \varphi_{\e i}^\circ-\nabla \cdot (\mathds1_{\R^d\setminus F}+\e^2 \mathds1_F) (e_i+\nabla \varphi_{\e i}^\circ)=0,\qquad\text{in $\tfrac1\e D$},
\end{equation}
and we then set $\varphi_\e^\circ:=\{\varphi_{\e i}^\circ\}_{1\le i\le d}$.
In these terms, we show the following almost sure version of the convergence properties of Step~1: for any ball $B\subset D$ we have almost surely,
\begin{equation}\label{e.000}
\lim_{\e \downarrow 0}
\Big(\int_D\e^2|\varphi_\e^\circ(\tfrac\cdot\e)|^2+\int_D \mathds1_{\R^d \setminus \e F} |\nabla \varphi_\e^\circ(\tfrac \cdot \e)-\nabla \varphi(\tfrac \cdot \e)|^2+\int_D\e^2\mathds1_{\e F}|\nabla\varphi_\e^\circ(\tfrac\cdot\e)|^2\Big)\,=\,0.
\end{equation}
Without loss of generality, let us focus on correctors in the direction $e_1$.

\medskip
\substep{2.1} Application of the subadditive ergodic theorem.\\
Consider the random set function $\calF$ given for all bounded open sets $O\subset\R^d$ by
\begin{eqnarray*}
\calF(O)\,:=\,\inf_{\psi \in H^1_0(O)} \int_O \Big(\tfrac{\gamma^2}{d_O^2} \psi^2+ (\mathds1_{\R^d\setminus F}+\tfrac{\gamma^2}{d_O^2}\mathds1_F) |e_1+\nabla \psi|^2\Big),
\end{eqnarray*}
where $d_O:=\diam (O)$ and $\gamma>0$ will be chosen later. This is a rather unusual form for an integral functional since the integrand depends itself on the set.   
Since $F$ is stationary, $\calF$  is also stationary.
So defined, the set function $\calF$ is subadditive in the sense that for all disjoint bounded open sets $O_1,\dots,O_N$ we have
\[
\calF(\cup_{i=1}^N O_i) \le \sum_{i=1}^N \calF(O_i),
\]
which indeed follows from the gluing properties of Dirichlet conditions and from $d_{\cup_{i=1}^N O_i} \ge \max_{1\le i \le N} d_{O_i}$.
In addition, note that
\[\calF(O) \le |O| (1 \vee \tfrac{\gamma^2}{d_O^2}).\]
In this setting, the subadditive ergodic theorem entails that there exists some $\bar \calF\ge 0$ such that almost surely we have for all bounded open sets $O\subset\R^d$,
\begin{equation}\label{e.suberg}
\lim_{R\uparrow \infty} |RO|^{-1} \calF(RO)
 \,=\, \lim_{R \uparrow \infty} \expec{|RO|^{-1} \calF(RO)} \,=\,\bar \calF.
\end{equation}

\medskip
\substep{2.2} Convergence of energies: proof that almost surely, 
\begin{equation}\label{eq:conv-cor-red30}
\lim_{\e\downarrow0}\fint_{D}\Big(\e^2\varphi_{\e1}^\circ(\tfrac\cdot\e)^2
+(\mathds1_{\R^d\setminus\e F}+\e^2\mathds1_{\e F})|e_1+\nabla\varphi_{\e 1}^\circ(\tfrac\cdot\e)|^2\Big)
\,=\,\E\Big[\mathds1_{\R^d\setminus F}|e_1+\nabla\varphi_1|^2\Big].
\end{equation}
By~\eqref{e.suberg} with $R=\frac1\e$, $O=D$, and $\gamma=\diam(D)$, as the Dirichlet corrector $\varphi_{\e1}^\circ$ is the minimizer for $\calF(D/\e)$, it suffices to show that the limit in~\eqref{e.suberg} coincides with
\begin{equation}\label{ea.equal}
\bar \calF\,=\,\E\Big[\mathds1_{\R^d\setminus F}|\nabla\varphi_1+e_1|^2\Big].
\end{equation}
We start with the proof of the upper bound.
The definition of $\calF$ reads
\[\calF(D/\e)\,=\,\inf_{\psi\in H^1_0(D/\e)}G_\e(\psi),\qquad G_\e(\psi)\,:=\,\int_{D/\e}\Big(\e^2\psi^2+(\mathds1_{\R^d\setminus F}+\e^2\mathds1_F)|e_1+\nabla\psi|^2\Big).\]
For all $\kappa>0$, choose a smooth cutoff function~$\zeta_{\kappa}$ supported in $D$ such that $\zeta_{\kappa}=1$ in $D_\kappa:=\{x\in D:\dist(x,\partial D)>\kappa\}$ and such that $0\le \zeta_{\kappa}\le1$ and $|\nabla \zeta_{\kappa}|\lesssim 1/\kappa$.
In these terms, by minimality of $\varphi_{\e1}^\circ$, we can bound
\begin{equation}\label{eq:upper-FD}
\calF(D/\e)\,=\,G_\e(\varphi_{\e1}^\circ)\,\le\, G_\e(\zeta_{\kappa}(\e\cdot)\varphi_{\e 1}),
\end{equation}
where we recall that $\varphi_{\e1}$ stands for the stationary corrector of Step~1.
Expanding the expression in the right-hand side, we find
\begin{multline*}
G_\e(\zeta_{\kappa}(\e\cdot)\varphi_{\e1})\,=\, \int_{D/\e}\Big( \e^2\zeta_{\kappa}(\e\cdot)^2 \varphi_{\e1}^2 +(\mathds1_{\R^d\setminus F}+\e^2\mathds1_F)|e_1+\nabla (\zeta_{\kappa}(\e\cdot) \varphi_{\e1})|^2\Big)\\
\,\le\,\int_{D/\e} \Big(\e^2\varphi_{\e1}^2 + (\mathds1_{\R^d\setminus F}+\e^2\mathds1_F)|e_1+\nabla \varphi_{\e1}|^2\Big)\\
+\int_{D/\e} (\mathds1_{\R^d\setminus F}+\e^2\mathds1_F) \Big(2e_1+(1+\zeta_{\kappa}(\e\cdot))\nabla \varphi_{\e1}+\e\varphi_{\e1} (\nabla \zeta_{\kappa})(\e\cdot)\Big)\cdot \Big((\zeta_{\kappa}(\e\cdot)-1)\nabla \varphi_{\e1}+\e\varphi_{\e1} (\nabla \zeta_{\kappa})(\e\cdot)\Big).
\end{multline*}
Taking the expectation, passing to the limit $\e\downarrow0$, using the stationarity of $\varphi_{\e1}$, and appealing to the result~\eqref{eq:lim-masscorr} of Step~1, we get 
\[
\limsup_{\e \downarrow 0} |D/\e|^{-1} \E[G_\e(\zeta_{\kappa}(\e\cdot)\varphi_{\e1})]\,\le\,\E\Big[\mathds1_{\R^d\setminus F}|e_1+\nabla\varphi_1|^2\Big]
+|D\setminus D_\kappa|\,\E\Big[1+\mathds1_{\R^d\setminus F}|\nabla\varphi_1|^2\Big].\]
Recalling~\eqref{eq:upper-FD}, this provides an upper bound on $\limsup_\e|D/ \e|^{-1}\E[\calF(D/\e)]$, while the latter is equal to~$\bar \calF$ by~\eqref{e.suberg}. Taking $\kappa$ arbitrarily small, we are thus led to
\begin{equation}\label{ea.ub}
\bar \calF \,\le\, \E\Big[\mathds1_{\R^d\setminus F}|e_1+\nabla\varphi_1|^2\Big].
\end{equation}
We turn to the proof of the corresponding lower bound.
Note that
\[
 G_\e(\psi)\,\ge \, G_0 (\psi)\,=\,\int_{D/\e} \mathds1_{\R^d\setminus F}|e_1+\nabla\psi|^2,
\]
and recall that homogenization for the soft-inclusion problem precisely entails
\[
\lim_{\e \downarrow 0} \E\bigg[\inf_{\psi \in H^1_0(D/\e)} |D/\e|^{-1} G_0 (\psi)\bigg]\,=\,\E\Big[\mathds1_{\R^d\setminus F}|e_1+\nabla\varphi_1|^2\Big].
\]
This yields the lower bound
\begin{equation}\label{ea.lb}
\bar \calF \,\ge\, \E\Big[\mathds1_{\R^d\setminus F}|e_1+\nabla\varphi_1|^2\Big].
\end{equation}
Combined with~\eqref{ea.ub}, this proves the claim~\eqref{ea.equal}, hence~\eqref{eq:conv-cor-red30}.

\medskip
\substep{2.3} Proof of~\eqref{e.000}.\\
Reorganizing of the square, we can write
\begin{multline*}
\fint_{D}\Big(\e^2\varphi_{\e 1}^\circ(\tfrac\cdot\e)^2+\mathds1_{\R^d\setminus \e F}|\nabla(\varphi_{\e 1}^\circ-\varphi_1)(\tfrac\cdot\e)|^2+\e^2\mathds1_{\e F}|e_1+\nabla\varphi_{\e 1}^\circ(\tfrac\cdot\e)|^2\Big)\\
\,=\,\fint_{D}\Big(\e^2\varphi_{\e 1}^\circ(\tfrac\cdot\e)^2
+(\mathds1_{\R^d\setminus\e F}+\e^2\mathds1_{\e F})|e_1+\nabla\varphi_{\e 1}^\circ(\tfrac\cdot\e)|^2\Big)
+\fint_{D}\mathds1_{\R^d\setminus\e F}|e_1+\nabla\varphi_1(\tfrac\cdot\e)|^2\\
-2\fint_{D}\mathds1_{\R^d\setminus\e F}(e_1+\nabla\varphi_{\e 1}^\circ(\tfrac\cdot\e))\cdot(e_1+\nabla\varphi_1(\tfrac\cdot\e)),
\end{multline*}
and thus, integrating by parts in the last term and using the corrector equation for $\varphi_1$,
\begin{multline*}
\fint_{D}\Big(\e^2\varphi_{\e 1}^\circ(\tfrac\cdot\e)^2+\mathds1_{\R^d\setminus \e F}|\nabla(\varphi_{\e 1}^\circ-\varphi_1)(\tfrac\cdot\e)|^2+\e^2\mathds1_{\e F}|e_1+\nabla\varphi_{\e1}^\circ(\tfrac\cdot\e)|^2\Big)\\
\,=\,\fint_{D}\Big(\e^2\varphi_{\e1}^\circ(\tfrac\cdot\e)^2
+(\mathds1_{\R^d\setminus\e F}+\e^2\mathds1_{\e F})|e_1+\nabla\varphi_{\e1}^\circ(\tfrac\cdot\e)|^2\Big)
+\fint_{D}\mathds1_{\R^d\setminus\e F}|e_1+\nabla\varphi_1(\tfrac\cdot\e)|^2\\
-2\fint_{D}\mathds1_{\R^d\setminus\e F}e_1\cdot(e_1+\nabla\varphi_1(\tfrac\cdot\e)).
\end{multline*}
Appealing to the result~\eqref{eq:conv-cor-red30} of Step~2.2 for the limit of the first right-hand side term, using the ergodic theorem to further pass to the limit in the last two terms, and using the corrector equation for~$\varphi_1$ in the probability space, we obtain almost surely
\begin{multline*}
\lim_{\e\downarrow0}\fint_{D}\Big(\e^2\varphi^\circ_{\e1}(\tfrac\cdot\e)^2+\mathds1_{\R^d\setminus \e F}|\nabla(\varphi^\circ_{\e1}-\varphi_1)(\tfrac\cdot\e)|^2+\e^2\mathds1_{\e F}|e_1+\nabla\varphi^\circ_{\e1}(\tfrac\cdot\e)|^2\Big)\\
\,=\,2\E\Big[\mathds1_{\R^d\setminus F}|e_1+\nabla\varphi_1|^2\Big]
-2\E\Big[\mathds1_{\R^d\setminus F}e_1\cdot(e_1+\nabla\varphi_1)\Big]
\,=\,0,
\end{multline*}
that is,~\eqref{e.000}.

\medskip
\step3 Oscillating test functions. \\
Given $\psi\in C^\infty_c(D)$, testing the equation for $u_\e$ with $\psi+\e\varphi_{\e i}^\circ(\tfrac\cdot\e)\nabla_i\psi$, we obtain
\begin{multline*}
\int_D u_\e(\psi+\e\varphi_{\e i}^\circ(\tfrac\cdot\e)\nabla_i\psi)+\int_D(\nabla_i\psi)(e_i+\nabla\varphi_{\e i}^\circ(\tfrac\cdot\e))\cdot(1-\chi_\e+\e^2\chi_\e)\nabla u_\e\\
+\int_D\e\varphi_{\e i}^\circ(\tfrac\cdot\e)(\nabla \nabla_i\psi)\cdot(1-\chi_\e+\e^2\chi_\e)\nabla u_\e = \int_D f(\psi+\e\varphi_{\e i}^\circ(\tfrac\cdot\e)\nabla_i\psi).
\end{multline*}
Integrating by parts in the second term, using the approximate corrector equation~\eqref{eq:approx-cor-eps0}, and noting that $\chi_\e=\mathds1_{\e F}$ on the support of $\psi$ for $\e$ small enough, this entails
\begin{multline*}
\int_D u_\e\psi
-\int_D u_\e (1-\chi_\e+\e^2 \chi_\e)(e_i+\nabla\varphi_{\e i}^\circ(\tfrac\cdot\e))\cdot(\nabla\nabla_i\psi)\\
+\int_D\e\varphi_{\e i}^\circ(\tfrac\cdot\e)(\nabla \nabla_i\psi)\cdot(1-\chi_\e+\e^2\chi_\e)\nabla u_\e = \int_D f(\psi+\e\varphi_{\e i}^\circ(\tfrac\cdot\e)\nabla_i\psi).
\end{multline*}
By the Cauchy--Schwarz inequality and by the convergence properties~\eqref{e.000} of the approximate corrector, this yields in the limit $\e\downarrow0$, almost surely,
\begin{equation}\label{eq:conv-tartar}
\lim_{\e\downarrow0}\Big(\int_D u_\e\psi
-\int_D u_\e (1-\chi_\e)(e_i+\nabla\varphi_{i}(\tfrac\cdot\e))\cdot(\nabla\nabla_i\psi)\Big)
 = \int_D f \psi.
\end{equation}
It remains to pass to the limit in both left-hand side terms.

\medskip
\step4 Conclusion. \\
By~\eqref{eq:compact-baru} in form of $\|u_\e - \bar u-\chi_\e v_\e\|_{L^p(D)} \to 0$ and by the ergodic theorem of Lemma~\ref{lem:control Poincare constants} for $v_\e$, we find for the first summand in~\eqref{eq:conv-tartar},
\begin{equation*}
\lim_{\e\downarrow0}\int_D u_\e\psi 
\,=\,\E[v]\int_D(f-\bar u)\psi+\int_D\bar u\psi.
\end{equation*}
Let us now turn to the second summand in~\eqref{eq:conv-tartar}.
We appeal to a truncation argument and set $u_{\e,N}:=(u_\e\vee(-N))\wedge N$ for $N\ge1$.
The strong convergence~\eqref{eq:compact-baru} in $\Ld^p(D)$ entails that, along a subsequence (not relabeled), there exists a Lebesgue-negligible set $Z \subset D$ such that
$$
| u_\e (x) - \bar u (x) -\chi_\e(x) v_\e(x)| \xrightarrow{\e \downarrow0}0, \qquad \text{for all } x \in D\setminus Z.
$$
Egorov's theorem entails in turn that, for fixed $N\geq 1$, there exists $Z \subset Z_N \subset D$ such that $\left\lvert Z_N \right\rvert \leq 2^{-N}$ and there exists $\e(N)>0$ such that 
$$
| u_\e (x) - \bar u (x) -\chi_\e(x) v_\e(x)| \,\leq\, \tfrac N2, \qquad \text{for all $x \in D \setminus Z_N$ and all $0<\e<\e(N)$.}
$$
Hence, by the triangle inequality, for all $x \in D \setminus Z_N $ and all $ 0<\e<\e(N)$,
\begin{equation}\label{eq:aux1}
( 1 - \chi_\e(x) )\mathds{1}_{|u_\e(x)| \geq N}\,\leq \,\mathds{1}_{|\bar u(x)| \geq \frac{N}{2}}.
\end{equation}
In particular, by the Cauchy--Schwarz inequality, by~\eqref{eq:aux1}, and by the a priori estimate for $u_\e$ in~$L^2(D)$,
\begin{multline*}
\bigg|\int_D u_\e  (1-\chi_\e)(e_i+\nabla\varphi_i(\tfrac\cdot\e))\cdot(\nabla\nabla_i\psi)-\int_D u_{\e,N}  (1-\chi_\e)(e_i+\nabla\varphi_i(\tfrac\cdot\e))\cdot(\nabla\nabla_i\psi)\bigg|\\
\,\lesssim\,\Big(\int_D
|u_\e|^2\Big)^\frac12\Big(\int_D(1 - \chi_\e)\mathds{1}_{| u_\e| \geq N}|e_i+\nabla\varphi_i(\tfrac\cdot\e)|^2\Big)^\frac12
\,\lesssim \,\Big(\int_D \big( \mathds{1}_{Z_N} + \mathds{1}_{|\bar u| \geq \frac N2} \big)  |e_i+\nabla\varphi_i(\tfrac\cdot\e)|^2\Big)^\frac12.
\end{multline*}
Now using the ergodic theorem to pass to the limit $\e \downarrow 0$ and then using the monotone convergence theorem to pass to the limit $N\uparrow \infty$, we find
\begin{multline*}
\limsup_{N\uparrow\infty}\limsup_{\e\downarrow0}\bigg|\int_D u_\e  (1-\chi_\e)(e_i+\nabla\varphi_i(\tfrac\cdot\e))\cdot(\nabla\nabla_i\psi)-\int_D u_{\e,N}  (1-\chi_\e)(e_i+\nabla\varphi_i(\tfrac\cdot\e))\cdot(\nabla\nabla_i\psi)\bigg|\\
\,\lesssim \, \E\big[ |e_i + \nabla \varphi_i |^2\big]^\frac12  \lim_{N\uparrow\infty}\Big(2^{-N} + \int_D \mathds{1}_{|\bar u| \geq \frac N2} \Big)^\frac12 \, = \,0.
\end{multline*}
Note that for fixed $N$ it follows from~\eqref{eq:compact-baru} that $(1-\chi_\e)(u_{\e,N}-\bar u_N)\to0$ in $\Ld^2(D)$ as $\e\downarrow0$, where we have similarly defined $\bar u_N:=(\bar u\vee(-N))\wedge N$. Further using the ergodic theorem and the definition of~$\bar a$, cf.~\eqref{eq:def-bara-re}, we may then infer
\begin{equation*}
\lim_{N\uparrow\infty}\limsup_{\e\downarrow0}\bigg|\int_D u_\e(1-\chi_\e)(e_i+\nabla\varphi_i(\tfrac\cdot\e))\cdot(\nabla\nabla_i\psi)
-\int_D \bar u_{N}e_i\cdot\bar a (\nabla\nabla_i\psi)\bigg|
\,=\,0.
\end{equation*}
Passing to the limit $N\uparrow\infty$ in $\bar u_N$ and integrating by parts, we conclude
\begin{equation*}
\lim_{\e\downarrow0}\int_D u_\e(1-\chi_\e)(e_i+\nabla\varphi_i(\tfrac\cdot\e))\cdot(\nabla\nabla_i\psi)
\,=\,-\int_D \nabla \bar u \cdot\bar a \nabla\psi.
\end{equation*}
Inserting these different limits into~\eqref{eq:conv-tartar}, we conclude
\begin{equation*}
(1-\E[v])\int_D\bar u\psi
+\int_D \nabla \bar u \cdot\bar a \nabla\psi = (1-\E[v])\int_D f \psi,
\end{equation*}
meaning that $\bar u$ satisfies the claimed homogenized problem~\eqref{eq:homog-prob}.
By~\cite[Lemma~8.8]{JKO94}, the extension property of Assumption~\ref{ass1} together with the assumption $\E[\mathds1_F]<1$ implies that $\bar a$ is positive definite, so that $\bar u$ is uniquely defined as the solution of the homogenized problem and belongs to~$H^1_0(D)$. This allows to get rid of the extractions, which concludes the proof.
\qed

\subsection{Proof of Theorem~\ref{th:corr-qual}}\label{sec:thcorrqual}
We consider a modulated energy between $u_\e$ and its two-scale expansion,
where we use a fixed cut-off function $\rho\in C^\infty_c(D)$ to truncate the corrector in a neighborhood of the boundary of $D$,
\[\int_D(u_\e-\bar u_\e-v_\e)^2+\int_D(1-\chi_\e+\e^2\chi_\e)\big|\nabla(u_\e-\bar u_\e-v_\e-\e\varphi^\circ_{\e i}(\tfrac\cdot\e)\rho\nabla_i\bar u_\e)\big|^2,\]
where we recall that $\varphi^\circ_\e$ is the approximate Dirichlet corrector defined in Step~2 of the proof of Theorem~\ref{th:main}
and that $\bar u_\e$ is the approximation of $\bar u$ chosen in the statement.
Expanding the squares and recalling that $v_\e\in H^1_0(F_\e(D))$, we find
\begin{multline*}
\int_D(u_\e-\bar u_\e-v_\e)^2+\int_D(1-\chi_\e+\e^2\chi_\e)\big|\nabla(u_\e-\bar u_\e-v_\e-\e\varphi_{\e i}^\circ(\tfrac\cdot\e)\rho\nabla_i\bar u_\e)\big|^2\\
\,=\,\int_D\Big(u_\e^2
+(1-\chi_\e+\e^2\chi_\e)|\nabla u_\e|^2\Big)
+\int_D\Big(v_\e^2
+\e^2\chi_\e|\nabla v_\e|^2\Big)
+\int_D\bar u_\e(\bar u_\e+2v_\e)-2\int_D u_\e(\bar u_\e+v_\e)\\
+\int_D(1-\chi_\e+\e^2\chi_\e)\Big|\nabla\bar u_\e+(\nabla\varphi_{\e i}^\circ)(\tfrac\cdot\e)\rho\nabla_i\bar u_\e+\e\varphi_{\e i}^\circ(\tfrac\cdot\e)\nabla(\rho\nabla_i\bar u_\e)\Big|^2\\
-2\int_D(1-\chi_\e+\e^2\chi_\e)\e\varphi_{\e i}^\circ(\tfrac\cdot\e)\nabla(\rho\nabla_i\bar u_\e)\cdot\nabla u_\e
+2\int_D\e^2\chi_\e\e\varphi_{\e i}^\circ(\tfrac\cdot\e)\nabla(\rho\nabla_i\bar u_\e)\cdot\nabla v_\e\\
+2\int_D\e^2\chi_\e\nabla v_\e\cdot\big(\nabla\bar u_\e+(\nabla\varphi_{\e i}^\circ)(\tfrac\cdot\e)\rho\nabla_i\bar u_\e \big)
-2\int_D(1-\chi_\e+\e^2\chi_\e)\nabla u_\e\cdot\big(\nabla\bar u_\e+(\nabla\varphi_{\e i}^\circ)(\tfrac\cdot\e)\rho\nabla_i\bar u_\e\big)\\
-2\int_D(1-\chi_\e+\e^2\chi_\e)\nabla u_\e\cdot\nabla v_\e.
\end{multline*}
Using the equations for $u_\e$ and $v_\e$ to rewrite both the first two right-hand side terms and the last two, and reorganizing the terms, we find
\begin{multline*}
\int_D(u_\e-\bar u_\e-v_\e)^2+\int_D(1-\chi_\e+\e^2\chi_\e)\big|\nabla(u_\e-\bar u_\e-v_\e-\e\varphi_{\e i}^\circ(\tfrac\cdot\e)\rho\nabla_i\bar u_\e)\big|^2\\
\,=\,
\int_Df(u_\e-\bar u_\e-v_\e)-\int_D\bar u_\e(f-\bar u_\e-v_\e)\\
+\int_D(1-\chi_\e+\e^2\chi_\e)\Big|\nabla\bar u_\e+(\nabla\varphi_{\e i}^\circ)(\tfrac\cdot\e)\rho\nabla_i\bar u_\e+\e\varphi_{\e i}^\circ(\tfrac\cdot\e)\nabla(\rho\nabla_i\bar u_\e)\Big|^2\\
+2\int_D\e^2\chi_\e\e\varphi_{\e i}^\circ(\tfrac\cdot\e)\nabla(\rho\nabla_i\bar u_\e)\cdot\nabla v_\e
+2\int_D\e^2\chi_\e\nabla v_\e\cdot\big(\nabla\bar u_\e+(\nabla\varphi_{\e i}^\circ)(\tfrac\cdot\e)\rho\nabla_i\bar u_\e\big)\\
-2\int_D(f-u_\e) \e \varphi_{\e i}^\circ(\tfrac\cdot\e)\rho\nabla_i\bar u_\e.
\end{multline*}
Now we aim to pass to the limit in the right-hand side.
As $\rho$ is compactly supported in~$D$, we note that $\chi_\e=\mathds1_{\e F}$ in the support of $\rho$ for $\e$ small enough.
Using the almost sure weak convergence $u_\e\cvf\bar u+\E[v](f-\bar u)$ in $L^2(D)$ as obtained in Theorem~\ref{th:main}, recalling that the ergodic theorem for $v_\e$ yields $v_\e\cvf\E[v](f-\bar u)$ in $\Ld^2(D)$ almost surely, using the energy estimates for~$u_\e$ and~$v_\e$ (and in particular $\e^2 \int_D \chi_\e |\nabla v_\e|^2 \lesssim 1$),
using the result~\eqref{e.000} of Step~2 of the proof of Theorem~\ref{th:main}, and recalling that the approximation $\bar u_\e$ of $\bar u$ is chosen such that $\bar u_\e \to \bar u$ in $\Ld^2(D)$ and
$\sup_\e (\|\nabla \bar u_\e\|_{\Ld^\infty(D)}+\e \|\nabla^2 \bar u_\e\|_{\Ld^\infty(D)})<\infty$, we find almost surely
\begin{multline}\label{eq:lim-modenergy}
\lim_{\e\downarrow0}\bigg(\int_D(u_\e-\bar u_\e-v_\e)^2+\int_D(1-\chi_\e+\e^2\chi_\e)\big|\nabla(u_\e-\bar u_\e-v_\e-\e\varphi_{\e i}^\circ(\tfrac\cdot\e)\rho\nabla_i\bar u_\e)\big|^2\\
-\int_D\mathds1_{\R^d\setminus\e F}\big|\nabla\bar u_\e+(\nabla\varphi_{i})(\tfrac\cdot\e)\rho\nabla_i\bar u_\e\big|^2\bigg)
\,=\,
-(1-\E[v])\int_D\bar u(f-\bar u).
\end{multline}
In order to pass to the limit in the last left-hand side term,
we decompose
\[\nabla\bar u_\e+(\nabla\varphi_{i})(\tfrac\cdot\e)\rho\nabla_i\bar u_\e\,=\,(e_i+\nabla\varphi_{i})(\tfrac\cdot\e)\rho\nabla_i\bar u_\e+(1-\rho)\nabla\bar u_\e,\]
we use that $\nabla \bar u_\e \to \nabla \bar u$ in $\Ld^2(D)$, appeal to the ergodic theorem, and we recognize the definition of $\bar a$, cf.~\eqref{eq:def-bara-re}, to the effect that almost surely
\begin{multline*}
\lim_{\e\downarrow0}\int_D\mathds1_{\R^d\setminus\e F}\big|\nabla\bar u_\e+(\nabla\varphi_{i})(\tfrac\cdot\e)\rho\nabla_i\bar u_\e\big|^2\\
\,=\,
\int_D\nabla\bar u\cdot\bar a\nabla\bar u
-\int_D(1-\rho)^2\nabla\bar u\cdot\bar a\nabla\bar u
+\E[\mathds1_{\R^d\setminus F}]\int_D(1-\rho)^2|\nabla\bar u|^2.
\end{multline*}
Inserting this into~\eqref{eq:lim-modenergy} and noting that the energy identity for the equation~\eqref{eq:homog-prob} for $\bar u$ reads
\[\int_D\nabla\bar u\cdot\bar a\nabla\bar u\,=\,(1-\E[v])\int_D\bar u(f-\bar u),\]
we deduce almost surely
\begin{multline*}
\lim_{\e\downarrow0}\bigg(\int_D(u_\e-\bar u_\e-v_\e)^2+\int_D(1-\chi_\e+\e^2\chi_\e)\big|\nabla(u_\e-\bar u_\e-v_\e-\e\varphi_{\e i}^\circ(\tfrac\cdot\e)\rho\nabla_i\bar u_\e)\big|^2\bigg)\\
\,=\,
-\int_D(1-\rho)^2\nabla\bar u\cdot\bar a\nabla\bar u
+\E[\mathds1_{\R^d\setminus F}]\int_D(1-\rho)^2|\nabla\bar u|^2\,\lesssim\,\int_D(1-\rho)^2|\nabla\bar u|^2.
\end{multline*}
Let us now remove the cut-off function $\rho$ in the left-hand side. Using the triangle inequality and arguing similarly as above to estimate the error, we are led to
\begin{equation*}
\lim_{\e\downarrow0}\bigg(\int_D(u_\e-\bar u_\e-v_\e)^2+\int_D(1-\chi_\e+\e^2\chi_\e)\big|\nabla(u_\e-\bar u_\e-v_\e-\e\varphi_{\e i}^\circ(\tfrac\cdot\e)\nabla_i\bar u_\e)\big|^2\bigg)\\
\,\lesssim\,\int_D(1-\rho)^2|\nabla\bar u|^2.
\end{equation*}
By the arbitrariness of $\rho$, the right-hand side can be made arbitrarily small, and we conclude almost surely
\begin{equation*}
\lim_{\e\downarrow0}\bigg(\int_D(u_\e-\bar u_\e-v_\e)^2+\int_D(1-\chi_\e+\e^2\chi_\e)\big|\nabla(u_\e-\bar u_\e-v_\e-\e\varphi_{\e i}^\circ(\tfrac\cdot\e)\nabla_i\bar u_\e)\big|^2\bigg)
\,=\,0.
\end{equation*}
As $v_\e$ vanishes outside $F_\e(D)$, this yields in particular
\begin{equation*}
\lim_{\e\downarrow0}\bigg(\int_D(u_\e-\bar u_\e-v_\e)^2+\int_D(1-\chi_\e)\big|\nabla(u_\e-\bar u_\e-\e\varphi_{\e i}^\circ(\tfrac\cdot\e)\nabla_i\bar u_\e)\big|^2\bigg)
\,=\,0.
\end{equation*}
Expanding the gradient in the second integral, using~\eqref{e.000} as above to neglect the terms involving $\e\varphi_{\e i}^\circ$ and to replace $\nabla\varphi_{\e i}^\circ$ by $\nabla\varphi_i$, we conclude almost surely
\begin{equation}\label{eq:res-conv-reg}
u_\e-\bar u_\e-v_\e\,\to\,0,\qquad(1-\chi_\e)\big(\nabla u_\e-(e_i+\nabla\varphi_i)(\tfrac\cdot\e)\nabla_i\bar u_\e\big)\,\to\,0,\qquad\text{in $L^2(D)$}.
\end{equation}
Combined with the result~\eqref{eq:conv-veps-lem} of Lemma~\ref{lem:control Poincare constants}
and with the convergence $\bar u_\e \to \bar u$ in $\Ld^2(D)$, the first convergence actually becomes
\begin{equation}\label{eq:res-conv-reg-re}
u_\e-\bar u-v(\tfrac\cdot\e)(f-\bar u)\,\to\,0,\qquad\text{in $L^2(D)$}.
\end{equation}
\qed

\section{Proof of quantitative error estimates}\label{sec:quant}
This section is devoted to the proof of Theorem~\ref{th:quant}, which we split into two main steps.

\medskip
\step1 Estimates inside inclusions: we prove the following post-processing of Lemma~\ref{prop:sol-inside},
\begin{eqnarray}
\|u_\e-\bar u-v_\e\|_{\Ld^2(F_\e(D))}&\lesssim&\|u_\e-\bar u\|_{\Ld^2(D\setminus F_\e(D))}+\e\|f\|_{\Ld^2(D)},\label{eq:L2est-err-ins}\\
\|\mathds1_{F_\e(D)}(u_\e-\bar u-v_\e)\|_{\dot H^{-1}(D)}&\lesssim&\e\|f\|_{\Ld^2(D)}, \label{eq:H1est-err-ins}
\end{eqnarray}
where we recall that $v_\e$ stands for the solution of the auxiliary problem~\eqref{eq:def-veps}.

We start with the proof of the $\Ld^2$ estimate~\eqref{eq:L2est-err-ins}.
By Assumption~\ref{ass3}, the extension condition~\ref{ass1} holds with $p=2$: given a ball~$B$ that contains the domain~$D$, extending $u_\e-\bar u$ by $0$ on $B\setminus D$, we may thus define
\[w_\e\,:=\,P_{B,\e} ((u_\e-\bar u) \mathds 1_{D \setminus F_\e(D)})|_D~\in~ H^{1}_0(D),\]
which satisfies $w_\e=u_\e-\bar u$ in $D\setminus F_\e(D)$ and
\[\|w_\e\|_{L^{2}(D)} \lesssim \|u_\e-\bar u\|_{L^2(D \setminus F_\e(D))},\qquad
\| \nabla w_\e\|_{L^{2}(D)} \lesssim \|\nabla (u_\e-\bar u)\|_{L^2(D \setminus F_\e(D))}.\]
In terms of the solution $v_\e$ of~\eqref{eq:def-veps}, note that we have
\begin{equation}\label{eq:ueps-veps}
u_\e-\bar u-v_\e-\e^2\triangle (u_\e-v_\e)=0,\qquad\text{in $F_\e(D)$}.
\end{equation}
Now subtracting $w_\e$, we find that $u_\e-\bar u-w_\e-v_\e$ belongs to $H^1_0(F_\e(D))$ and satisfies
\begin{equation*}
(u_\e-\bar u-w_\e-v_\e)-\e^2\triangle(u_\e-\bar u-w_\e-v_\e)=-w_\e+\e^2\triangle(\bar u+w_\e),\qquad\text{in $F_\e(D)$}.
\end{equation*}
By Lemma~\ref{lem:regp} with $p=2$, we deduce after rescaling
\begin{equation*}
\|u_\e-\bar u-w_\e-v_\e\|_{\Ld^2(F_\e(D))}
\,\lesssim\,\|w_\e\|_{\Ld^2(F_\e(D))}+\e\|\nabla w_\e\|_{\Ld^2(F_\e(D))}+\e\|\nabla\bar u\|_{\Ld^2(F_\e(D))}.
\end{equation*}
Hence, by the triangle inequality, by the properties of $w_\e$, and by the a priori estimates for $u_\e$ and $\bar u$, we get
\begin{eqnarray*}
\|u_\e-\bar u-v_\e\|_{\Ld^2(F_\e(D))}
&\le&\|w_\e\|_{\Ld^2(F_\e(D))}+\e\|\nabla w_\e\|_{\Ld^2(F_\e(D))}+\e\|\nabla\bar u\|_{\Ld^2(F_\e(D))}\\
&\lesssim& \|u_\e-\bar u\|_{\Ld^2(D\setminus F_\e(D))}+\e\|\nabla u_\e\|_{\Ld^2(D\setminus F_\e(D))} +\e\|\nabla \bar u\|_{\Ld^2(D)}\\
&\lesssim&\|u_\e-\bar u\|_{\Ld^2(D\setminus F_\e(D))}+\e\|f\|_{\Ld^2(D)},
\end{eqnarray*}
that is, \eqref{eq:L2est-err-ins}.

We turn to the proof of the $H^{-1}$ estimate~\eqref{eq:H1est-err-ins}.
Consider the solution $r_\e\in H^1(F_\e(D))$ of the auxiliary problem
\begin{equation}\label{e.aux}
\left\{\begin{array}{ll}
-\triangle r_\e=0,&\text{in $F_\e(D)$},\\
\partial_\nu r_\e=\partial_\nu(u_\e-v_\e),&\text{on $\partial F_\e(D)$},\\
\fint_{\e I_n}r_\e=0,&\forall n.
\end{array}\right.
\end{equation}
Using~\eqref{eq:ueps-veps}, the energy identity for~\eqref{e.aux} yields
\begin{eqnarray*}
\e^2\int_{F_\e(D)}|\nabla r_\e|^2
&=&\e^2\int_{\partial F_\e(D)}r_\e\partial_\nu(u_\e-v_\e)\\
&=&\int_{F_\e(D)}r_\e (u_\e-\bar u-v_\e)+\e^2\int_{F_\e(D)}\nabla r_\e\cdot \nabla(u_\e-v_\e).
\end{eqnarray*}
By the Poincar\'e--Wirtinger inequality for $r_\e$ in each inclusion $\e I_n\subset F_\e(D)$, recalling $\sup_n\calP_2(I_n)<\infty$ by Assumption~\ref{ass3}, we deduce
\begin{equation*}
\e\|\nabla r_\e\|_{\Ld^2(F_\e(D))}
\,\lesssim\,\|u_\e-\bar u-v_\e\|_{\Ld^2(F_\e(D))}+\e\|\nabla(u_\e-v_\e)\|_{\Ld^2(F_\e(D))}.
\end{equation*}
Given $h\in C^\infty_c(D)$,
using the equation~\eqref{e.aux} for $r_\e$ together with~\eqref{eq:ueps-veps}, we get
\begin{eqnarray*}
\int_{F_\e(D)}h(u_\e-\bar u-v_\e)
&=&\e^2\int_{\partial F_\e(D)}h\partial_\nu(u_\e-v_\e)-\e^2\int_{F_\e(D)}\nabla h\cdot \nabla(u_\e-v_\e)\\
&=&\e^2\int_{F_\e(D)}\nabla h\cdot \nabla r_\e-\e^2\int_{F_\e(D)}\nabla h\cdot \nabla(u_\e-v_\e),
\end{eqnarray*}
and thus, by the above a priori bound on $\nabla r_\e$, combined with the a priori estimates on $u_\e$ and $v_\e$,
\begin{eqnarray*}
\Big|\int_{F_\e(D)}h(u_\e-\bar u-v_\e)\Big|
&\lesssim&\|\nabla h\|_{\Ld^2(F_\e(D))}\Big(\e\|u_\e-\bar u-v_\e\|_{\Ld^2(F_\e(D))}+\e^2\|\nabla(u_\e-v_\e)\|_{\Ld^2(F_\e(D))}\Big)\\
&\lesssim&\|\nabla h\|_{\Ld^2(F_\e(D))}\Big(\e\|u_\e-\bar u-v_\e\|_{\Ld^2(F_\e(D))}+\e\|f\|_{\Ld^2(D)}\Big).
\end{eqnarray*}
Combined with the $\Ld^2$ estimate~\eqref{eq:L2est-err-ins} on $u_\e-\bar u-v_\e$, this proves~\eqref{eq:H1est-err-ins}.

\medskip
\step2 Conclusion: two-scale expansion error.\\
Given a cutoff parameter $\eta \in[\e,1]$ to be chosen later on, let $\rho:= \rho_{\eta}\in C^\infty_c(D)$ be a boundary cutoff with $\rho\equiv 1$ in $D_\eta := \{ x \in D \, : \, \dist(x,\partial D)>\eta \}$ and $|\nabla\rho|\lesssim\eta^{-1}$. 
Using the extension $P_{B,\e}$ on a ball~$B$ that contains the domain $D$ and extending $u_\e$ and $\bar u$ by $0$ on~$B\setminus D$, we can consider the modified two-scale expansion error
\[w_{\eta,\e}\,:=\, P_{B,\e}\Big(\big(u_\e-\bar u-\e\rho\varphi_i(\tfrac\cdot\e) \nabla_i\bar u\big)\mathds1_{D\setminus F_\e(D)}\Big)\Big|_D \quad\in~ H^1_0(D),\]
where we recall that $\varphi_i$ is the corrector for the soft-inclusion problem, cf.~\eqref{eq:corr}, and where $\bar u$ is the solution of the homogenized problem~\eqref{eq:homog-prob}. 
First, using the double-porosity equation~\eqref{eq:double-por-00}, we find that~$w_{\eta,\e}$ satisfies 
\begin{multline*}
(1-\chi_\e)w_{\eta,\e}-\nabla\cdot (1-\chi_\e)\nabla w_{\eta,\e} 
\,=\, f  - \chi_\e u_\e + \e^2 \nabla \cdot \chi_\e \nabla u_\e\\
- (1-\chi_\e)(\bar u + \e \rho \varphi_i(\tfrac\cdot\e) \nabla_i\bar u)  +\nabla \cdot (1-\chi_\e ) \nabla( \bar u + \e \rho \varphi_i(\tfrac\cdot\e) \nabla_i \bar u).
\end{multline*}
Next, noting that the corrector equation and the definition of the flux corrector in Lemma~\ref{lem:corr} yield
\begin{eqnarray*}
\nabla\cdot \Big( (1 - \chi_\e)( e_i+\nabla \varphi_i (\tfrac\cdot\e) )  \nabla_i \bar u\Big)
-\nabla\cdot\bar a \nabla\bar u
&=&\Big( (1 - \chi_\e)(e_i+ \nabla \varphi_i (\tfrac\cdot\e) ) - \bar a e_i \Big) \cdot \nabla \nabla_i \bar u\\
&=&(\nabla_k\sigma_{ijk})(\tfrac{\cdot}{\e}) \nabla^2_{ij} \bar u\\
&=&-\nabla \cdot \big(\e \sigma_i (\tfrac{\cdot}{\e}) \nabla \nabla_i \bar u \big)
\end{eqnarray*}
and further using the homogenized equation~\eqref{eq:homog-prob} for $\bar u$, we infer 
\begin{multline*}
\nabla \cdot (1-\chi_\e ) \nabla( \bar u + \e \rho \varphi_i(\tfrac\cdot\e) \nabla_i \bar u)
\,=\,-(1-\E[v])(f-\bar u)
-\nabla \cdot \Big((1-\chi_\e)(1-\rho) \nabla \varphi_i(\tfrac\cdot\e) \nabla_i \bar u\Big)\\
+ \e \nabla \cdot \Big((1-\chi_\e)\varphi_i(\tfrac\cdot\e)\nabla (\rho\nabla_i \bar u) -\sigma_i(\tfrac\cdot\e)\nabla \nabla_i \bar u \Big).
\end{multline*}
Inserting this into the above, and smuggling in $v_\e$, we deduce that $w_{\eta,\e}$ satisfies
\begin{multline*}
(1-\chi_\e)w_{\eta,\e}-\nabla\cdot(1-\chi_\e)\nabla w_{\eta,\e}
\,=\,\big(\E[v](f-\bar u) - v_\e\big)
-\chi_\e (u_\e-\bar u - v_\e)\\
 -\nabla \cdot \Big((1-\chi_\e)(1-\rho) \nabla \varphi_i(\tfrac\cdot\e) \nabla_i \bar u\Big)
+\e \nabla \cdot \Big(
(1-\chi_\e)\varphi_i(\tfrac\cdot\e)\nabla(\rho\nabla_i\bar u)
-\sigma_i(\tfrac\cdot\e)\nabla \nabla_i \bar u
\Big)\\
-\e (1-\chi_\e) \rho \varphi_i(\tfrac\cdot\e) \nabla_i\bar u + \e^2  \nabla \cdot \chi_\e \nabla u_\e.
\end{multline*}
We now appeal to the energy estimate for this equation.
In order to absorb the right-hand term involving the flux corrector $\sigma$ (which appears without a factor $1-\chi_\e$), we use Young's inequality together with the fact that
\begin{equation}\label{eq:good w}
\| \nabla w_{\eta,\e} \|_{\Ld^2(D)} \,\lesssim\, \| \nabla w_{\eta,\e} \|_{\Ld^2(D \setminus F_\e(D))},
\end{equation}
and we are then led to
\begin{multline}\label{eq:energy-wdiff}
\int_D(1-\chi_\e)(|w_{\eta,\e}|^2+|\nabla w_{\eta,\e}|^2)
\,\lesssim\,
\Big|\int_D \big(\E[v](f-\bar u) - v_\e\big) \,w_{\eta,\e}\Big|
+\Big|\int_{F_\e(D)} (u_\e-\bar u - v_\e)w_{\eta,\e}\Big|\\
+\int_{D\setminus D_\eta} \Big( |\nabla\varphi(\tfrac\cdot\e)|^2 + (\tfrac{\e}{\eta})^2 |\varphi(\tfrac\cdot\e)|^2 \Big)| \nabla \bar u|^2 
+\e^2 \int_D \big(|\varphi(\tfrac\cdot\e)|^2+|\sigma(\tfrac\cdot\e)|^2\big) |\nabla^2\bar u|^2
+\e^2 \int_D  \lvert f \rvert^2,
\end{multline}
where we also used the energy estimates on $u_\e$ for the last right-hand term. Note that the third right-hand term corresponds to the boundary layer, for which one will only get a bound $O(\e)$ after optimizing the choice of $\eta$. It remains to estimate the first two right-hand side terms.

We start by examining the first right-hand side term in~\eqref{eq:energy-wdiff}.
On the one hand, in terms of the inclusion corrector in Lemma~\ref{lem:corr}, we can write
\[v(\tfrac\cdot\e) (f-\bar u) - \E\left[v\right] (f-\bar u) = \nabla \cdot \big( \e \theta(\tfrac\cdot\e) (f-\bar u) \big) - \e \theta(\tfrac\cdot\e) \cdot \nabla (f-\bar u).\]
On the other hand, by definition of $v$, for all $n$, we find that $\bar v_{\e,n} :=v(\tfrac\cdot\e) \fint_{\e I_n} (f - \bar u) \in H^1_0(\e I_n)$ satisfies
\[
v_\e - \bar v_{\e,n} - \e^2 \triangle (v_\e - \bar v_{\e,n}) = f-\bar u - \fint_{\e I_n} (f-\bar u), \qquad \text{in $\e I_n$},
\]
hence, by the Poincar\'e--Wirtinger inequality, recalling $\sup_n\calP_2(I_n)<\infty$ by Assumption~\ref{ass3},
\[\| v_\e - \bar v_{\e,n} \|_{L^2(\e I_n)} \,\lesssim\, \e \| \nabla (f-\bar u)\|_{L^2(\e I_n)}.\]
Combining these two observations together with the Cauchy--Schwarz inequality, we deduce
\begin{multline*}
\int_D \big(\E[v](f-\bar u) - v_\e\big)\,w_{\eta,\e}
\,=\,\e \int_D \Big( (f-\bar u) \,\theta(\tfrac\cdot\e) \cdot \nabla w_{\eta,\e} + w_{\eta,\e} \theta(\tfrac\cdot\e) \cdot \nabla(f-\bar u) \Big) \\
+ \sum_n \int_{\e I_n} \bigg( (\bar v_{\e,n} -v_\e ) w_{\eta,\e} +v(\tfrac\cdot\e) w_{\eta,\e} \Big( (f-\bar u) - \fint_{\e I_n} (f-\bar u) \Big) \bigg)\\
\, \lesssim \,  \e \int_D |\theta(\tfrac\cdot\e)|\big( |f-\bar u|+ |\nabla (f - \bar u) | \big) \big( | \nabla w_{\eta,\e} | + | w_{\eta,\e} | \big)
+\e\sum_n \| \nabla (f - \bar u) \|_{L^2(\e I_n)} \| w_{\eta,\e}\|_{L^2(\e I_n)}
\end{multline*}
where we also used the fact that $0 \le v \le 1$ almost surely. Recalling~\eqref{eq:good w} and further using the Poincar\'e inequality for $w_{\eta,\e}$ in $D$, we conclude
\begin{multline}
\Big|\int_D \big(\E[v](f-\bar u) - v_\e\big)\,w_{\eta,\e}\Big|\\
\, \lesssim \, 
\e \Big(\|\theta(\tfrac\cdot\e)(f-\bar u)\|_{L^2(D)}+\|\theta(\tfrac\cdot\e)\nabla (f - \bar u)\|_{L^2(D)}+\| \nabla(f-\bar u)\|_{L^2(D)} \Big)\|\nabla w_{\eta,\e} \|_{L^2(D\setminus F_\e(D))}.\label{eq:control T2}
\end{multline}
It remains to examine the second right-hand side term in~\eqref{eq:energy-wdiff}.
Using the result~\eqref{eq:H1est-err-ins} of Step~1, together with~\eqref{eq:good w} once again, we find
\begin{equation}\label{eq:control T3}
\Big|\int_{F_\e(D)} (u_\e - \bar u - v_\e) w_{\eta,\e}\Big|
\,\lesssim \,  \e \|f\|_{\Ld^2(D)} \| \nabla w_{\eta,\e}\|_{\Ld^2(D)}
\,\lesssim \,\e \|f\|_{\Ld^2(D)} \| \nabla w_{\eta,\e}\|_{\Ld^2(D\setminus F_\e(D))} .
\end{equation}
Finally, inserting~\eqref{eq:control T2} and~\eqref{eq:control T3} into~\eqref{eq:energy-wdiff}, and appealing to Young's inequality, we are led to
\begin{multline*}
\int_D(1-\chi_\e)(|w_{\eta,\e}|^2+|\nabla w_{\eta,\e}|^2)
\,\lesssim\,\int_{D\setminus D_\eta} \Big( | \nabla \varphi(\tfrac\cdot\e)|^2 + (\tfrac{\e}{\eta})^2 |\varphi(\tfrac\cdot\e) |^2 \Big)  |\nabla \bar u|^2 \\
+ \e^2 \int_D \Big( 1 + | \theta(\tfrac\cdot\e) |^2 + | \varphi(\tfrac\cdot\e) |^2 + | \sigma(\tfrac\cdot\e)|^2 \Big)\Big( | f |^2+ | \bar u |^2 + | \nabla f |^2 + | \nabla \bar u |^2 + | \nabla^2 \bar u |^2 \Big).
\end{multline*}
Taking the expectation, recalling assumption~\eqref{eq:hyp bar u} and the bounds on correctors~\eqref{eq:moment bounds}, and using the energy estimate for $\bar u$, we infer
\begin{equation*}
\E \bigg[\int_D(1-\chi_\e)(|w_{\eta,\e}|^2+|\nabla w_{\eta,\e}|^2 )\bigg]
\, \lesssim \,
\e^2 \Big(\| f \|^2_{H^1(D)}+\|\nabla^2\bar u\|_{L^2(D)}^2\Big)+(1+\tfrac{\e}{\eta})^2|D \setminus D_\eta|\|\nabla\bar u\|_{L^\infty(D)}^2.
\end{equation*}
Noting that $|D\setminus D_\eta|\lesssim\eta$, the choice $\eta=\e$ allows to bound the right-hand side by $O(\e)$. Recalling the definition of $w_{\eta,\e}$, this means
\[\E \Big[\| u_\e - \bar u - \e \rho \varphi_i(\tfrac\cdot\e)\nabla_i \bar u \|^2_{H^1(D\setminus F_\e(D))}\Big]
\,\ \lesssim \,\ 
\e \Big(\| f \|^2_{H^1(D)}+\|\nabla^2\bar u\|_{L^2(D)}^2+\|\nabla\bar u\|_{L^\infty(D)}^2\Big).\]
Finally, noting similarly that
\[\E\Big[\|\e (1-\rho) \varphi_i(\tfrac\cdot\e)\nabla_i \bar u\|_{H^1(D\setminus F_\e(D)}^2\Big]\,\lesssim\,\e,\]
and further recalling the result~\eqref{eq:L2est-err-ins} of Step~1, the conclusion follows.
\qed

\appendix
\section{Discussion of the assumptions}\label{sec:app}
This appendix is devoted to the proofs of Lemmas~\ref{lem:ass1} and~\ref{lem:ass2} on the validity of our two main assumptions~\ref{ass1} and~\ref{ass2}.

\subsection{Proof of Lemma~\ref{lem:ass1}}\label{sec:app1}
We split the proof into five subsections, and consider items~(i)--(v) separately. In each case, the proof proceeds by first constructing the extension operator locally on suitable neighborhoods of the inclusions.

\subsubsection{Proof of~(i)}
This is a consequence of the classical extension procedure of~\cite[Chapter~VI]{Stein-70}. As the formulation of~\ref{ass1} is not quite standard, we include details for the reader's convenience.
For all~$n$, consider the neighborhood of inclusion $I_n$ given by
\begin{equation}\label{eq:def-Tn0}
T_n\,:=\,\Big\{x\in I_n+\tfrac1{2C_0}\diam(I_n)B^\circ\,:\,\dist\big(x,\partial(I_n+\tfrac1{2C_0}\diam(I_n)B^\circ)\big)>\tfrac1{4C_0}\diam(I_n)\Big\},
\end{equation}
where we recall the short-hand notation $B^\circ=B(0,1)$ for the unit ball at the origin.
By definition,~$T_n$ has a $C^2$ boundary and satisfies
\[I_n~\subset~T_n~\subset~I_n+\tfrac1{2C_0}\diam(I_n)B^\circ,\]
hence, by assumption,
\begin{equation}\label{eq:disj-Tn0}
T_n\cap T_m=\varnothing,\qquad\text{for all $n\ne m$.}
\end{equation}
From here, we split the proof into three steps: First, we appeal to a result of Stein~\cite{Stein-70} for the construction of an extension $ H^1(T_n\setminus I_n)\to H^1(T_n)$ around each inclusion. Next, the two parts of Assumption~\ref{ass1} are deduced by applying this extension locally around each inclusion.

\medskip
\step1 Construction of local extension operators $ H^1(T_n\setminus I_n)\to H^1(T_n)$.\\
Recall that for all $n$ the rescaled inclusion $I_n':=\diam(I_n)^{-1}I_n$ is assumed to satisfy the uniform Lipschitz condition in the statement, and note that $\diam(I_n')=1$.
Similarly, the rescaled neighborhood $T_n':=\diam(I_n)^{-1}T_n$ and the `annulus' $T_n'\setminus I_n'$ both satisfy the same condition (up to possibly increasing the constant~$C_0$).
Then appealing to~\cite[Chapter~VI, Theorem~5]{Stein-70}, there is a linear extension operator $P_n':H^1(T_n'\setminus I_n')\to H^1(T_n')$ such that $P_n'u=u$ in~$T_n'\setminus I_n'$ and
\[\|P_n'u\|_{H^1(T_n')}\,\lesssim\,\|u\|_{H^1(T_n'\setminus I_n')},\]
for some multiplicative constant only depending on $d,C_0$ (see indeed~\cite[Theorem~3.8]{CCV-21} for the dependence of the constant). 
Now defining
\[P_n''u:=P_n'(u-\textstyle\fint_{T_n'\setminus I_n'}u)+\textstyle\fint_{T_n'\setminus I_n'}u\quad\in~H^1(T_n'),\]
we find $P_n''u=u$ in $T_n'\setminus I_n'$ and, using the Poincar\'e--Wirtinger inequality in $T_n'\setminus I_n'$,
\[\|\nabla P_n''u\|_{L^2(T_n')}\,\lesssim\,\|u-\textstyle \fint_{T_n'\setminus I_n'}u\|_{H^1(T_n'\setminus I_n')}\,\lesssim\,\|\nabla u\|_{L^2(T_n'\setminus I_n')},\]
where the multiplicative constants still only depend on $d,C_0$.
Next, by homogeneity, we can rescale this estimate by $\diam(I_n)$: for $u\in H^1(T_n\setminus I_n)$, we define
\[P_nu\,:=\,P_n''\big(u(\diam(I_n)\cdot)\big)(\diam(I_n)^{-1}\cdot)~\in ~H^1(T_n),\]
which then satisfies $P_nu=u$ in $T_n\setminus I_n$ and
\begin{equation}\label{eq:prop-Pnloc}
\|\nabla P_nu\|_{L^2(T_n)}\,\lesssim\,\|\nabla u\|_{L^2(T_n\setminus I_n)},
\end{equation}
where the multiplicative constant only depends on $d,C_0$.

\medskip
\step2 Conclusion --- part~1.\\
We show the validity of the first part~\eqref{eq:ass1-1} of~\ref{ass1} with $p=2$ (hence, with any $1\le p\le2$).
Let $B\subset\R^d$ be a ball and let $0<\e<\diam(B)$ be fixed. In view of the assumptions on the inclusions $\{I_n\}_n$, we can construct modified inclusions $\{\tilde I_n\}_n$ that satisfy the same assumptions (up to possibly increasing $C_0$), such that $(\bigcup_n\tilde I_n)\cap\frac1\e B=(\bigcup_n I_n)\cap\frac1\e B$ and such that the corresponding neighborhoods $\{\tilde T_n\}_n$ constructed in~\eqref{eq:def-Tn0} are all included in $\frac2\e B$.
By Step~1, for all $n$, we can construct an extension operator $\tilde P_n:H^1(\tilde T_n\setminus \tilde I_n)\to H^1(\tilde T_n)$ such that $\tilde P_nu=u$ in $\tilde T_n\setminus \tilde I_n$ and
\[\|\nabla\tilde P_n u\|_{L^2(\tilde T_n)}\,\lesssim\,\|\nabla u\|_{L^2(\tilde T_n\setminus\tilde I_n)}.\]
Given $u\in H^1_0(B)$, extending it by $0$ on $2B\setminus B$, and recalling that the neighborhoods $\{\tilde T_n\}_n$ are pairwise disjoint, cf.~\eqref{eq:disj-Tn0}, we may then define
\[P_{B,\e} u\,:=\,u\mathds1_{B\setminus \e\cup_n\tilde T_n}+\sum_n\tilde P_n(u(\e\cdot))(\tfrac1\e\cdot)\mathds1_{\e\tilde T_n}~\in~H^1_0(2B).\]
By definition, it satisfies $P_{B,\e} u=u$ in $B\setminus\e(\cup_n \tilde I_n)=B\setminus\e (\cup_nI_n)$ and, by homogeneity,
\[\|\nabla P_{B,\e} u\|_{L^2(2B)}\,\lesssim\,\|\nabla u\|_{L^2(B\setminus\e(\cup_nI_n))},\]
where the multiplicative constant only depends on $d,C_0$.

\medskip
\step3 Conclusion --- part~2.\\
We turn to the validity of the second part~\eqref{eq:ass1-2} of~\ref{ass1} with $p=2$.
In terms of the local extension operators $\{P_n\}_n$ constructed in Step~1, we define $P:H^1_\loc(\R^d)\to H^{1}_\loc(\R^d)$ as follows,
\[Pu\,:=\,u\mathds1_{\R^d\setminus \cup_nT_n}+\sum_nP_n(u|_{T_n\setminus I_n})\mathds1_{T_n}~\in~H^1_\loc(\R^d).\]
By definition, $Pu=u$ in $\R^d\setminus\cup_nI_n$.
Up to using an arbitrary criterion to ensure uniqueness of local extensions (e.g.\@ using a minimality argument), we find that $Pu$ is a stationary random field whenever the pair $(u,\{I_n\}_n)$ is jointly stationary.
It remains to check that it satisfies the desired estimate~\eqref{eq:ass1-2} with~$p=2$.
For that purpose, in the spirit of~\cite[Lemma~2.5]{DG-23}, as a consequence of the ergodic theorem together with a simple approximation argument, we first note that expectations can be expanded as follows: if~$\zeta$ is a nonnegative random field such that the pair $(\zeta,\{I_n\}_n)$ is jointly stationary, then
\begin{equation}\label{eq:expand-expect}
\E[\zeta\mathds1_{\cup_nT_n}]\,=\,\E\bigg[\sum_n\frac{\mathds1_{0\in I_n}}{|I_n|}\int_{T_n}\zeta\bigg].
\end{equation}
In particular, given $u\in L^2(\Omega;H^1_\loc(\R^d))$ such that $(u,\{I_n\}_n)$ is jointly stationary, we can decompose
\[\E[|\nabla Pu|^2]\,=\,\E[|\nabla Pu|^2\mathds1_{\R^d\setminus\cup_nT_n}]+\E\bigg[\sum_n\frac{\mathds1_{0\in I_n}}{|I_n|}\int_{T_n}|\nabla Pu|^2\bigg].\]
By definition of $Pu$ together with the properties of the local extensions $\{P_n\}_n$, cf.~\eqref{eq:prop-Pnloc}, we get
\[\E[|\nabla Pu|^2]\,\lesssim\,\E[|\nabla u|^2\mathds1_{\R^d\setminus\cup_nT_n}]+\E\bigg[\sum_n\frac{\mathds1_{0\in I_n}}{|I_n|}\int_{T_n}|\nabla u|^2\bigg].\]
Appealing again to~\eqref{eq:expand-expect}, this means
\[\E[|\nabla Pu|^2]\,\lesssim\,\E[|\nabla u|^2],\]
which is the desired estimate~\eqref{eq:ass1-2} with $p=2$.\qed

\subsubsection{Proof of~(ii)}
This is the generalization of a result due to Zhikov~\cite[Lemma 8]{Zhikov-86} for spherical structures (see also~\cite[Section~8.4]{JKO94}). We briefly sketch the needed adaptations. The main question is to determine how the norm of the local extension operators constructed in the proof of~(i) depends on particle separation, that is, determine the best scaling in~\eqref{eq:prop-Pnloc} with respect to the separation distance.
For abbreviation, let $B_r := B(0,r)$ stand here for the ball of radius $r>0$ centered at~$0$. First,  as in the proof of~(i), we consider a Stein linear extension operator $P:H^1(B_2\setminus B_1) \to H^1(B_2)$ such that
$Pw=w$ on~$B_2\setminus B_1$ and
\[\|\nabla Pw\|_{L^2(B_2)} \lesssim \| \nabla w \|_{\Ld^2(B_2 \setminus B_1)}.\]
For later purposes, note that we can also ensure the $L^2$-control
\[\|Pw\|_{L^2(B_2)} \lesssim \|w \|_{\Ld^2(B_2 \setminus B_1)}.\]
Let $\{\phi_n'\}_n$ be the sequence of homeomorphisms $\phi_n':B_2\to\R^d$ defined in the statement, and consider the rescaled maps $\phi_n:=\diam(I_n)\phi_n'$ such that $\phi_n(B_1)=\diam(I_n)\,\phi_n'(B_1)=I_n$.
Also consider the Lipschitz homeomorphisms $\theta_n:\R^d\to\R^d$ given by
\[\theta_n(y)\,:=\,\left\{\begin{array}{lll}
y&:&|y|<1,\\
\frac{y}{|y|} \big( 1 +\frac{1}{C_0^2}\nu_n(|y| - 1) \big)&:&|y|>1,
\end{array}\right.\]
where we recall that the constant $C_0$ is such that $C_0^{-1}\le\|\nabla\phi_n'\|_{L^\infty}\le C_0$ for all $n$.
In these terms, we consider the neighborhood of inclusion $I_n$ given by
\[T_n\,:=\,\phi_n(B_{1+\frac{\nu_n}{C_0^2}})\,=\,(\phi_n\circ\theta_n)(B_2)~~\supset~~I_n=\phi_n(B_1)=(\phi_n\circ\theta_n)(B_1).\]
and we define, for $u\in H^1(T_n\setminus I_n)$,
\[P_nu\,:=\,(P(u\circ\phi_n\circ\theta_n|_{B_2\setminus B_1}))\circ(\phi_n\circ\theta_n)^{-1}\quad\in~H^1(T_n).\]
By definition, this satisfies $P_nu=u$ in $T_n\setminus I_n$ and
\begin{equation}\label{eq:extension-munu}
\int_{T_n}|P_nu|^2\,\lesssim\,\nu_n^{-1}\int_{T_n\setminus I_n}|u|^2,\qquad
\int_{T_n}|\nabla P_nu|^2\,\lesssim\,\nu_n^{-1}\int_{T_n\setminus I_n}|\nabla u|^2,
\end{equation}
where the multiplicative constant only depends on $d,C_0$.
In addition, note that
\[T_n\,=\,\phi_n(B_{1+\frac{\nu_n}{C_0^2}})~\subset~ I_n+\|\nabla\phi_n\|_{L^\infty}\tfrac{\nu_n}{C_0^2}B_1~\subset~ I_n+\tfrac12\rho_nB_1,\]
where we have used $\nu_n=\rho_n/D_n$ and $C_0^2\ge\|\nabla\phi_n'\|_{L^\infty}\|\nabla(\phi_n')^{-1}\|_{L^\infty}=\|\nabla\phi_n\|_{L^\infty}\|\nabla(\phi_n)^{-1}\|_{L^\infty}$  in the last inclusion, and observed that $D_n=\diam(I_n)\ge2\|\nabla(\phi_n)^{-1}\|_{L^\infty}^{-1}$.
By definition of the $\rho_n$'s, this entails
\[T_n\cap T_m=\varnothing\quad\text{for all $n\ne m$}.\]
Therefore, we can apply this extension procedure locally around each inclusion and, combining this with the moment condition on the $\nu_n$'s as in~\cite[Lemma 8]{Zhikov-86}, the conclusion follows similarly as in the proof of item~(i); we leave the remaining details to the reader.\qed

\subsubsection{Proof of~(iii)}
This result for anisotropic inclusions is completely new to our knowledge.
As before, we start by constructing local extension operators.
For fixed $n$, in a suitable orthonormal frame, we can assume $I_n= B'_{\delta_n} \times (-L_n,L_n)$ for some $\delta_n,L_n>0$, and we then consider the following neighborhood of $I_n$,
\[T_n\,:=\,T_n' \times (-L_n-\tfrac12\delta_n\mu_n,L_n+\tfrac12\delta_n\mu_n),\qquad T_n'\,:=\,B'_{\delta_n(1+\frac12\mu_n)},\]
where we recall the notation $\mu_n=1\wedge\frac{\rho_n}{\delta_n}$ and $\rho_n=\min_{m:m\ne n}\dist(I_n,I_m)$. Note that by definition we have
\[T_n\cap T_m=\varnothing\quad\text{for all $n\ne m$}.\]
We shall construct an extension operator $P_n:H^1(T_n\setminus I_n)\to H^1(T_n)$ such that $P_nu=u$ in $T_n\setminus I_n$ and
\begin{equation}\label{eq:extension-cyl-todo}
\|\nabla P_nu\|_{L^2(T_n)}\,\lesssim\,\mu_n^{-1}\|\nabla u\|_{L^2(T_n\setminus I_n)},
\end{equation}
thus improving on the construction of the previous section (replacing indeed the factor $\nu_n^{-1}$ by $\mu_n^{-1}$).
Next, applying this extension locally around each inclusion and combining with the moment condition on the $\mu_n$'s as in~\cite[Lemma 8]{Zhikov-86}, the conclusion follows similarly as in the proof of item~(i); we leave the details to the reader.

\begin{figure}[hbtp]
\begin{tikzpicture}[line cap=round,line join=round,>=triangle 45,x=0.3cm,y=0.3cm]
\clip(-8,-8) rectangle (19,9.5);
\draw [rotate around={0.:(0.,6.)},line width=0.5pt,dotted] (0.,6.) ellipse (2. and 0.5);
\draw [rotate around={0.:(0.,-6.)},line width=0.5pt,dotted] (0.,-6.) ellipse (2. and 0.5);
\draw [line width=0.5pt,dotted] (-2.,6.)-- (-2.,-6.);
\draw [line width=0.5pt,dotted] (2.,6.)-- (2.,-6.);
\draw [rotate around={0.:(0.,6.5)},line width=0.5pt] (0.,6.5) ellipse (2.449489742783178 and 0.6123724356957945);
\draw [rotate around={0.:(0.,-6.5)},line width=0.5pt] (0.,-6.5) ellipse (2.449489742783178 and 0.6123724356957945);
\draw [rotate around={0.:(0.,4.5)},line width=0.5pt] (0.,4.5) ellipse (2.449489742783178 and 0.6123724356957945);
\draw [rotate around={0.:(0.,3.)},line width=0.5pt] (0.,3.) ellipse (2.449489742783178 and 0.6123724356957945);
\draw [rotate around={0.:(0.,-4.5)},line width=0.5pt] (0.,-4.5) ellipse (2.449489742783178 and 0.6123724356957945);
\draw [rotate around={0.:(0.,-3.)},line width=0.5pt] (0.,-3.) ellipse (2.449489742783178 and 0.6123724356957945);
\draw [line width=0.5pt] (-2.445264123169735,6.4640457165808565)-- (-2.4493272378405795,-6.492946268840901);
\draw [line width=0.5pt] (2.44458629397462,6.461271900292365)-- (2.4444358369569223,-6.53931717102672);
\draw [line width=0.5pt] (17,6.5)-- (17,6.);
\draw [line width=0.5pt] (17.,6.)-- (17.1,6.1);
\draw [line width=0.5pt] (17.,6.)-- (16.9,6.1);
\draw [line width=0.5pt] (17.,6.5)-- (16.9,6.4);
\draw [line width=0.5pt] (17.,6.5)-- (17.1,6.4);
\draw (17,7.3) node[anchor=north west] {$\frac{\mu_n}2$};
\draw [line width=0.5pt] (15.9,7.15)-- (16.4,7.15);
\draw [line width=0.5pt] (16.4,7.15)-- (16.3,7.25);
\draw [line width=0.5pt] (16.4,7.15)-- (16.3,7.05);
\draw [line width=0.5pt] (15.9,7.15)-- (16,7.25);
\draw [line width=0.5pt] (15.9,7.15)-- (16,7.05);
\draw (15.3,9.3) node[anchor=north west] {$\frac{\mu_n}2$};
\draw [line width=0.5pt] (-5.,6.)-- (-5.,-6.);
\draw [line width=0.5pt] (-5.,-6.)-- (-4.8,-5.8);
\draw [line width=0.5pt] (-5.,-6.)-- (-5.2,-5.8);
\draw [line width=0.5pt] (-5.,6.)-- (-5.2,5.8);
\draw [line width=0.5pt] (-5.,6.)-- (-4.8,5.8);
\draw (-7.5,1.) node[anchor=north west] {$L$};
\draw [line width=0.5pt] (-3.5,6.)-- (-3.5,3.);
\draw [line width=0.5pt] (-3.5,3.)-- (-3.3,3.2);
\draw [line width=0.5pt] (-3.5,3.)-- (-3.7,3.2);
\draw [line width=0.5pt] (-3.5,6.)-- (-3.3,5.8);
\draw [line width=0.5pt] (-3.5,6.)-- (-3.7,5.8);
\draw (-5,5.5) node[anchor=north west] {$1$};
\draw (2.4,-6.2) node[anchor=north west] {$T_n$};
\draw (-1,0.7461704453720828) node[anchor=north west] {$I_n$};
\draw [rotate around={0.:(7.,4.5)},line width=0.5pt] (7.,4.5) ellipse (2.449489742783178 and 0.6123724356957945);
\draw [rotate around={0.:(7.,-4.5)},line width=0.5pt] (7.,-4.5) ellipse (2.449489742783178 and 0.6123724356957945);
\draw [line width=0.5pt] (4.560837878595681,4.443826081726938)-- (4.55073429124636,-4.5082820888198825);
\draw [line width=0.5pt] (9.421926770979649,4.408392578080399)-- (9.432416497150065,-4.572175993408565);
\draw [rotate around={0.:(7.,4.5)},dotted,line width=0.5pt] (7.,4.5) ellipse (2. and 0.5);
\draw [rotate around={0.:(7.,-4.5)},dotted,line width=0.5pt] (7.,-4.5) ellipse (2. and 0.5);
\draw [line width=0.5pt,dotted] (5.008842105594642,4.45303575858484)-- (5.000347390839616,-4.509318807186508);
\draw [line width=0.5pt,dotted] (8.98451028911472,4.437891771681497)-- (8.975190294642031,-4.5785108670630725);

\draw[line width=0.5pt,dashed] (2.4444358369569223,4.5)-- (4.55073429124636,4.5);
\draw[line width=0.5pt,dashed] (2.4444358369569223,-4.5)-- (4.55073429124636,-4.5);
\draw[line width=0.5pt,dashed] (2.4444358369569223,3)-- (4.55073429124636,3);
\draw[line width=0.5pt,dashed] (9.421926770979649,3)-- (11.550572523443755,3);
\draw[line width=0.5pt,dashed] (2.4444358369569223,6.5)-- (11.550572523443755,6.5);

\draw (6.,1.2241967693328335) node[anchor=north west] {};
\draw (9.2,-3.7) node[anchor=north west] {$T_{n,\circ}$};

\draw [rotate around={0.:(14.,3.)},line width=0.5pt] (14.,3.) ellipse (2.449489742783178 and 0.6123724356957945);
\draw [rotate around={0.:(14.,3.)},line width=0.5pt,dotted] (14.,3.) ellipse (2. and 0.5);
\draw [rotate around={0.:(14.,6.)},line width=0.5pt,dotted] (14.,6.) ellipse (2. and 0.5);
\draw [rotate around={0.:(14.,6.5)},line width=0.5pt] (14.,6.5) ellipse (2.449489742783178 and 0.6123724356957945);
\draw [line width=0.5pt,dotted] (12.,6.)-- (12.,3.);
\draw [line width=0.5pt,dotted] (16.,6.)-- (16.,3.);
\draw [line width=0.5pt] (16.445023427922244,6.4630368114611345)-- (16.444193010609403,2.959750367324543);
\draw [line width=0.5pt] (11.550572523443755,6.49563367222873)-- (11.55053360067129,3.002673464040804);
\draw (13.2,5.5) node[anchor=north west] {};
\draw (16.3,3.5) node[anchor=north west] {$T_{n,+}$};
\end{tikzpicture}
\caption{}
\label{fig:cylinders}
\end{figure}

We now focus on the construction of the local extension operator $P_n$ around $I_n$.
If $\delta_n\ge L_n$, then $\diam(I_n)\simeq \delta_n$ and thus $\mu_n\simeq\nu_n$, so that the construction of the desired extension operator already follows from~\eqref{eq:extension-munu} in the proof of~(ii). Henceforth, we may thus assume
\[\delta_n\le L_n.\]
In addition, by homogeneity, up to a dilation, we can assume without loss of generality
\[\delta_n=1.\]
Let us decompose the neighborhood $T_n$ of $I_n$ into three overlapping pieces, $T_n= T_{n,+}\cup  T_{n,-}\cup T_{n,\circ}$,
\begin{equation*}
T_{n,+}\,:=\,T_n'\times(L_n-1,L_n),\quad T_{n,-}\,:=\,T_n'\times(-L_n,-L_n+1),\quad T_{n,\circ}\,:=\,T_n'\times(-L_n+\tfrac12,L_n-\tfrac12),
\end{equation*}
where we recall $T_n'=B'_{1+\mu_n/2}$; see Figure~\ref{fig:cylinders}.
We start by constructing an extension operator on the middle piece in the decomposition and argue by dimension reduction.
By the construction~\eqref{eq:extension-munu} in the proof of~(ii) (now applied to $B_1'\subset T_n'\subset\R^{d-1}$), we get a linear extension operator $P_{n,\circ}':H^1(T_n'\setminus B_1')\to H^1(T_n')$ such that $P'_{n,\circ} u=u$ in~$T_n'\setminus B_1'$ and
\[
\int_{T_n'}|P_{n,\circ}'u|^2\,\lesssim\,\mu_n^{-1}\int_{T_n'\setminus B_1'}|u|^2,\qquad
\int_{T_n'}|\nabla P_{n,\circ}'u|^2\,\lesssim\,\mu_n^{-1}\int_{T_n'\setminus B_1'}|\nabla u|^2.\]
For $u\in H^1(T_{n,\circ}\setminus I_n)$, we then define
\[(P_{n,\circ}u)(x',x_d)\,:=\,P_{n,\circ}'(u(\cdot,x_d))(x')~\in~H^1(T_{n,\circ}),\]
By the properties of $P_{n,\circ}'$ (including the $L^2$-control), it satisfies $P_{n,\circ}u=u$ in $T_{n,\circ}\setminus I_n$ and
\[\int_{T_{n,\circ}}|\nabla P_{n,\circ} u|^2\,\lesssim\,\mu_n^{-1}\int_{T_{n,\circ}\setminus I_n}|\nabla u|^2.\]
Next, we turn to extension operators around both ends of the cylinder.
By a straightforward adaptation of the same construction in the proof of~(ii), we can find extension operators $P_{n,\pm}:H^1(T_{n,\pm}\setminus I_n)\to H^1(T_{n,\pm})$ such that $P_{n,\pm}u=u$ in $T_{n,\pm}\setminus I_n$ and
\[\int_{T_{n,\pm}}|\nabla P_{n,\pm}u|^2\,\lesssim\,\mu_n^{-1}\int_{T_{n,\pm}\setminus I_n}|\nabla u|^2;\]
we skip the details for shortness.
It remains to glue together these three extensions. For that purpose, we choose a partition of unity $\chi_{n,\pm},\chi_{n,\circ} \in C^\infty_b(T_n;[0,1])$ such that $\chi_{n,+} + \chi_{n,-} + \chi_{n,\circ}=1$ in $T_n$, $\chi_{n,\pm}=0$ outside $T_{n,\pm}$, $\chi_{n,\circ}=0$ outside $T_{n,\circ}$, and $\| \nabla \chi_{n,\pm}\|_{L^\infty},\| \nabla \chi_{n,\circ}\|_{L^\infty}\lesssim1$ (see Figure~\ref{fig:cylinders}). We then consider the extension operator $P_n : H^1(T_n \setminus I_n) \to H^1(T_n)$ given by
\[P_nu \,:=\, \chi_{n,+} P_{n,+}(u|_{T_{n,+}\setminus I_n}) + \chi_{n,-} P_{n,-}(u|_{T_{n,-}\setminus I_n}) +\chi_{n,\circ} P_{n,\circ}(u|_{T_{n,\circ}\setminus I_n}),\]
which satisfies by construction $P_nu = u$ in $T_n \setminus I_n$.
In addition, by the properties of the partition of unity, we can estimate
\begin{multline*}
\int_{T_n}|\nabla P_nu|^2
\,\lesssim\, \int_{T_{n,+}} |\nabla P_{n,+}u|^2+\int_{T_{n,-}} |\nabla P_{n,-}u|^2+\int_{T_{n,\circ}} |\nabla P_{n,\circ}u|^2\\
+\int_{T_n'\times(L-1,L-\frac12)} |P_{n,+} u - P_{n,\circ} u|^2
+ \int_{T_n'\times(-L+\frac12,-L+1)} |P_{n,-} u - P_{n,\circ} u|^2.
\end{multline*}
Recalling that $P_{n,+} u = u = P_{n,\circ} u $ on $(\partial B'_1) \times (L-1,L-\frac12)$, and similarly $P_{n,-} u =u= P_{n,\circ} u$ on $(\partial B'_1)\times(-L+\tfrac12, - L+1)$, Poincar\'e's inequality allows to control the last two terms by gradients. Together with the properties of $P_{n,\pm}$ and $P_{n,\circ}$, this leads us to
\begin{equation*}
\int_{T_n}|\nabla P_nu|^2
\,\lesssim\, \int_{T_{n,+}} |\nabla P_{n,+}u|^2+\int_{T_{n,-}} |\nabla P_{n,-}u|^2+\int_{T_{n,\circ}} |\nabla P_{n,\circ}u|^2
\,\lesssim\,\mu_n^{-1}\int_{T_n\setminus I_n} |\nabla u|^2,
\end{equation*}
that is, \eqref{eq:extension-cyl-todo}.\qed

\subsubsection{Proof of~(iv)}
This is the generalization of a result due to Zhikov~\cite{Zhikov-90b} for dense cubic packings of unit balls (see also~\cite[Lemma 3.14]{JKO94}).
Some work is needed to adapt it the more general setting of~(iv).
We split the proof into four steps.

%%%%%%%%%%%%%%%%%%%%%%%
%%%%%%%%%%%%%%%%%%%%%%%
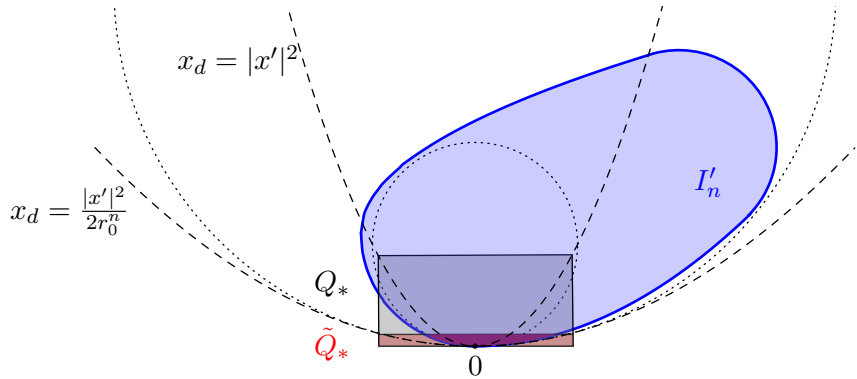
\begin{figure}[hbtp]
\begin{tikzpicture}[line cap=round,line join=round,>=triangle 45,x=2.5cm,y=2.5cm]
\clip(-2.5,-0.3) rectangle (2.5,1.8);
\fill[line width=1.pt,color=red,fill=red,fill opacity=0.4] (-0.5062638905320481,0.) -- (-0.5066168304292887,0.06207276319529555) -- (0.5159344902053536,0.06758346820240614) -- (0.5166962905850788,0.) -- cycle;
\draw [line width=0.5pt,dotted] (0.,0.54) circle (0.54);

\draw[name path=G,line width=1.pt,color=blue,smooth,samples=100,domain=-0.6:0.94] plot(\x,{((((\x)+0.6)/1.7)^(0.5)+0.6)});
\draw[name path=H,line width=1.pt,color=blue,smooth,samples=100,domain=-0.6:-0.] plot(\x,{(0.6-(0.36-(\x)^(2))^(0.5))});
\draw[name path=A,line width=1.pt,color=blue,smooth,samples=100,domain=-0.24:0.] plot(\x,{(0.6-(0.36-(\x)^(2))^(0.5))});
\draw [name path=E,line width=0.5pt] (-0.5089917843483605,0.4797641417298664)-- (0.5112405029524542,0.4840126886279227);
\draw [name path=F,line width=0.5pt] (0.5166962905850788,0.)-- (-0.5062638905320481,0.);
\draw [line width=0.5pt] (-0.5066168304292887,0.06207276319529555)-- (-0.24,0.06207276319529555);
\draw [name path=B,line width=0.5pt] (-0.24,0.06207276319529555)-- (0.,0.06207276319529555);
\draw[name path=c,line width=1pt,color=blue,smooth,samples=100,domain=0.:1.45] plot(\x,{(\x)^(2)/3});
\draw[name path=C,line width=1pt,color=blue,smooth,samples=100,domain=0:0.43] plot(\x,{(\x)^(2)/3});
\draw [name path=D,line width=0.5pt] (0,0.06207276319529555)-- (0.43,0.06207276319529555);
\draw [line width=0.5pt] (0.43,0.06207276319529555)-- (0.5159344902053536,0.06207276319529555);
\draw [name path=I,line width=1.pt,color=blue] (0.9391,1.552) arc (105:-45:0.515);

\tikzfillbetween[of=A and B]{purple, opacity=1};
\tikzfillbetween[of=C and D]{purple, opacity=2};
\tikzfillbetween[of=E and F]{gray, opacity=0.4};

\tikzfillbetween[of=G and H]{blue, opacity=0.2};
\tikzfillbetween[of=c and I]{blue, opacity=0.2};

\draw [line width=0.5pt,dotted] (0.,1.9) circle (1.9);
\draw[line width=0.5pt,dashed,smooth,samples=100,domain=-2:2.] plot(\x,{(\x)^(2)/(2*1.9)});
\draw[line width=0.5pt,dashed,smooth,samples=100,domain=-1.2:1.2] plot(\x,{(\x)^(2.0)/0.54});

\draw (-1.62,1.65) node[anchor=north west] {$x_d=|x'|^2$};
\draw (-2.5,0.9) node[anchor=north west] {$x_d=\frac{\lvert x' \rvert^2}{2r_0^n}$};

\draw [line width=0.5pt] (0.5112405029524542,0.4840126886279227)-- (0.5166962905850788,0.);

\draw [line width=0.5pt] (-0.5062638905320481,0.)-- (-0.5089917843483605,0.4797641417298664);

\draw (-0.09,0) node[anchor=north west] {$0$};
\draw (1.1,1.) node[anchor=north west] {\textcolor{blue}{$I_n'$}};
\draw (-0.9,0.45) node[anchor=north west] {$Q_*$};
\draw (-0.9,0.15) node[anchor=north west] {\textcolor{red}{$\tilde Q_*$}};

\fill (0.,0) circle (1pt);
\end{tikzpicture}
\caption{This illustrates the construction of local neighborhoods at a boundary point, say $x_*=0\in\partial I_n'$.}
\label{fig:contact}
\end{figure}
%%%%%%%%%%%%%%%%%%%%%%%
%%%%%%%%%%%%%%%%%%%%%%%

\medskip
\step1 Construction of polygonal neighborhoods.\\
For all $n$, consider the rescaled inclusion $I_n':=(r_1^n)^{-1}I_n$ and set $r_0^n:={r_2^n}/{r_1^n}\in[1,C_0]$.
As displayed in Figure~\ref{fig:contact}, the geometric assumptions on the inclusions imply that
for all $n$ and any boundary point $x_* \in \partial I_n$, there is an orthonormal system of coordinates $(x',x_d)\in\R^{d-1}\times\R$ and a strictly convex $C^2$ function
\[\phi_n :  B'(0,\tfrac12):= \{ x' \in \R^{d-1}:|x'|< \tfrac12 \} \to \R\]
such that
\begin{gather}
r_0^n - \sqrt{(r_0^n)^2 - \lvert x' \rvert^2} \,\le\, \phi_n(x') \,\le\, 1 - \sqrt{1-\lvert x' \rvert^2},\qquad \text{for all } x' \in B'(0,\tfrac12),\nonumber\\[2mm]
I_n' \cap \big(x_*+B'(0,\tfrac12)\times(-\tfrac12,\tfrac12)\big) \,=\, x_* + \big\{ (x',x_d)  \, : \, \phi_n(x') < x_d < \tfrac12, \, \lvert x'\rvert < \tfrac12\big\},\label{eq:phi_n}\\[2mm]
\partial I_n' \cap  \big(x_*+B'(0,\tfrac12)\times(-\tfrac12,\tfrac12)\big) \,=\, x_* + \big\{ (x', \phi_n(x')) \, : \,  \lvert x'\rvert < \tfrac12\big\}.\nonumber
\end{gather}
Note that $\phi_n(0)=0$, $\nabla \phi_n(0) = 0$, and 
\begin{equation}\label{eq:quadratic bounds phi_n}
\tfrac{1}{2 r_0^n} \lvert x' \rvert^2 \leq \phi_n(x') \leq \lvert x'\rvert^2,\qquad\text{for all $x' \in B'(0,\tfrac12)$}.
\end{equation}
In addition, note that assumption~\eqref{eq:closesmallincl} entails
\[\sup_n \sharp \big\{ m \, : \, \dist ( I_n,I_m) \leq \tfrac1{C_0}r_1^n \big\} \,\lesssim_{C_0}\,1.\]
Considering the set $S_{n}'\subset \partial I_n'$ of ``almost contact points'' of the inclusion $I'_n$,
which we define as
\[S_n':=(r_1^n)^{-1}S_n,\qquad S_{n} \,:=\, \big\{ x \in \partial I_n \, : \, \exists m \neq n , \, \dist(I_n,I_m)=\dist(x,I_m) \leq \tfrac1{C_0}r_1^n \big\},\]
we have
\[\sup_n \sharp S_{n}' \,\lesssim_{C_0}\,1,
\qquad\inf_n \inf_{x,y\in S_n'\atop x\ne y}|x-y|\,\gtrsim_{C_0}\,1.\]
For all $x_*\in S_{n}'$, consider the open half-plane $\Pi^{+}_n(x_*) \subset \R^d$ tangent to~$I_n'$ at $x_*$ and containing $I_n'$. By construction and convexity, the set
\[T_n'\,:=\, \bigcap_{x_* \in S'_{n}} \Pi^+_n(x_*)\]
is a convex polytope with $\sharp S'_n$ facets such that
\[I_n'\subset T_n'\qquad\text{and}\qquad\partial T_n' \cap \partial I_n' = S'_{n}.\]
Note that these neighborhoods $\{T_n'\}_n$ have a priori no reason to be pairwise disjoint and may have sharp angles (thus threatening their uniform Lipschitz regularity).
However, adding a finite number of additional boundary points to the set $S_{n}'$ (only depending on $d,C_0$) allows us to define polytopes $T_n$ with the following properties:
\begin{enumerate}[---]
\item $|T_n|\lesssim_{C_0} |I_n|$;
\item for all $n\ne m$ we have $T_n\cap T_m=\varnothing$;
\smallskip\item for all $n$, angles between neighboring facets of $T_n$ are $\ge\frac\pi2$ (say);
\smallskip\item $r_*:=\frac15\inf_n\inf_{x,y\in S_n',x\ne y}|x-y|$ satisfies\footnote{For the upper bound, it is enough to add enough points to $S_n'$.} $1 \lesssim_{C_0} r_* \le 1$.
\end{enumerate}
In terms of the length $r_*$ defined in this last item, let us now introduce suitable neighborhoods of almost contact points: for all $x_*\in S'_n$, we set
\begin{eqnarray*}
\tilde Q_*(x_*)\,:=\,x_*+B'(0,r_*)\times(0,\tfrac{r_*^2}{2r_0^n})
~~\subset~~
Q_*(x_*)\,:=\,x_*+B'(0,r_*)\times(0,r_*^2),
\end{eqnarray*}
and we then define the reduced inclusions
\[\tilde I_n'\,:=\, I_n'\setminus\bigcup_{x_*\in S'_{n}}\tilde Q_*(x_*).\]
For all $x_*,y_*\in S'_{n}$ with $x_*\ne y_*$, the condition $|x_*-y_*|\ge5r_*$ entails $\dist(Q_*(x_*),Q_*(y_*))\ge r_*$ (recall that $r_*\le 1$).
By construction, reduced inclusions $\{\tilde I_n'\}_n$ are thus uniformly separated
and satisfy the different assumptions of item~(i) for some constant only depending on $d,C_0$.

\medskip
\step2 Local extensions around rescaled inclusions.\\
In order to construct a linear extension operator $H^1(T_n'\setminus I_n')\to W^{1,1}(T_n')$ around each rescaled inclusion,
we start by constructing corresponding extension operators $H^1(Q_*(x_*)\setminus I_n')\to W^{1,1}(\tilde Q_*(x_*))$ around each almost contact point $x_*\in S'_{n}$.
Let $x_*\in S'_{n}$ be fixed, and
recall~\eqref{eq:phi_n} and the orthonormal coordinates $(x',x_d)\in\R^{d-1}\times\R$ associated with $\phi_n$.
Up to a translation, we can assume $x_* = 0$ and then set $\tilde Q_*:= \tilde Q_*(0)$, $Q_*:=Q_*(0)$.
In this setting, consider the $C^2$ diffeomorphism
$\psi_n : B'(0,r_*)\times\R \to \R^{d}$ (onto its image) given by
\[\psi_n(x',x_d) := \Big( \tfrac{1}{r_*} x' , \tfrac{2r_0^n}{r_*^2} \big( x_d - \phi_n(x') + \tfrac{1}{2 r_0^n}|x'|^2\big) \Big).\]
In terms of the `model' sets
\begin{equation*}
K~:=~\{(y',y_d):|y'|^2<y_d<1\}\quad\subset\quad \{(y',y_d):|y'|<1,\,0<y_d<1\}~=:~Q,
\end{equation*}
the properties in~\eqref{eq:phi_n} and~\eqref{eq:quadratic bounds phi_n}, together with the definition of $\tilde Q_*,Q_*$, entail
\begin{equation*}
\psi_n(I_n'\cap \tilde Q_*)\,\subset\,K\,\subset\,\psi_n(I_n'\cap Q_*),
\qquad Q\setminus K\,\subset\,\psi_n(Q_*\setminus I_n').
\end{equation*}
By a trivial generalization of~\cite[Lemma 3.14]{JKO94} to the vectorial setting $d \geq 2$, one can construct a linear extension operator $P_0:H^1(Q\setminus K)\to W^{1,1}(Q)$ such that $P_0u=u$ in $Q\setminus K$ and for all $1 \leq p < 2\tfrac{d+1}{d+3}$,
\[\| \nabla P_0 u\|_{L^p(K)} \,\lesssim_p\, \| \nabla u \|_{L^2(Q \setminus K)}.\]
Hence, for $u \in H^1(Q_*\setminus I_n')$, we have $u\circ\psi_n^{-1}|_{Q\setminus K}\in H^1(Q\setminus K)$ and we can define
\[P_{n,0}u\,:=\,P_0(u\circ\psi_n^{-1}|_{Q\setminus K})\circ\psi_n|_{I_n'\cap \tilde Q_*}\,\in\, W^{1,1}(I_n'\cap \tilde Q_*).\]
As by definition $P_{n,0}u=u$ on $\tilde Q_*\cap\partial I_n'$, we can extend $P_{n,0}u$ by $u$ on $\tilde Q_*\setminus I_n'$, thus defining an element~$P_{n,0}u\in W^{1,1}(\tilde Q_*)$.
This defines a linear extension operator $P_{n,0}:H^1(Q_*\setminus I_n')\to W^{1,1}(\tilde Q_*)$ such that $P_{n,0}u=u$ in $\tilde Q_*\setminus I_n'$ and for all $1\le p<2\frac{d+1}{d+3}$,
\[\|\nabla P_{n,0} w \|_{\Ld^p(I_n'\cap \tilde Q_*)}\,\lesssim_{C_0}\,\|\nabla P_0(w\circ\psi_n^{-1}|_{Q\setminus K})\|_{\Ld^p(K)}\,\lesssim_p\, \|w\circ\psi_n^{-1}\|_{\Ld^2(Q\setminus K)}
\,\lesssim_{C_0}\, \|w\|_{\Ld^2(Q_*\setminus I_n')},\]
where we note that the Lipschitz norms of $\psi_n$ and $\psi_n^{-1}$ are bounded only depending on $d,C_0$.
Combining this construction around every almost contact point $x_*\in S'_{n}$, we are led to a linear extension operator $P_n':H^1(T_n'\setminus I_n')\to W^{1,1}(T_n'\setminus\tilde I_n')$ such that $P_n'u=u$ in $T_n'\setminus I_n'$ and for all $1\le p<2\frac{d+1}{d+3}$,
\begin{equation}\label{eq:prop-Pn}
\|\nabla P_n'u\|_{L^p(T_n'\setminus\tilde I_n')}\,\lesssim_{C_0,p}\,\|\nabla u\|_{L^2(T_n'\setminus I_n')}.
\end{equation}
Next, given that the reduced inclusions $\{\tilde I_n'\}_n$ satisfy the uniform separation and regularity requirements of item~(i) for some constant only depending only on $d,C_0$, we can apply the result of Step~1 of the proof of item~(i), which actually also holds on $L^p$ instead of $L^2$ for any $1\le p\le2$ (see~\cite[Chapter~VI, Theorem~5]{Stein-70}): there exists a linear extension operator $P_n'':W^{1,1}(T_n'\setminus\tilde I_n')\to W^{1,1}(T_n')$ such that $P_n''u=u$ in $T_n'\setminus\tilde I_n'$ and for all $1\le p\le2$,
\[\|\nabla P_n''u\|_{L^p(T_n')}\,\lesssim_{C_0}\,\|\nabla u\|_{L^p(T_n'\setminus\tilde I_n')}.\]
Hence, combining this with~\eqref{eq:prop-Pn}, the composition $P_n:=P_n''\circ P_n'$ defines a linear extension operator $H^1(T_n'\setminus I_n')\to W^{1,1}(T_n')$ such that $P_nu=u$ in $T_n'\setminus I_n'$ and, for all $1\le p<2\frac{d+1}{d+3}$,
\[\|\nabla P_nu\|_{L^p(T_n')}\,\lesssim_{C_0}\,\|\nabla u\|_{L^2(T_n'\setminus I_n')}.\]

\medskip
\step3 Conclusion --- part~1.\\
We show the validity of the first part~\eqref{eq:ass1-1} of Assumption~\ref{ass1} for all $1\le p<2\frac{d+1}{d+3}$.
For all $n$, consider the rescaled operator $P_n^\e:H^1(\e(T_n\setminus I_n))\to W^{1,1}(\e T_n):u\mapsto P_n(u(\e r_1^n\cdot))(\frac\cdot{\e r_1^n})$, which satisfies $P_n^\e u=u$ in $\e(T_n\setminus I_n)$ and, by homogeneity,
\begin{equation*}
\|\nabla P_n^\e u\|_{L^p(\e T_n)}
\,\lesssim_{C_0,p}\,(\e r_1^n)^{d(\frac1p-\frac12)}\|\nabla u\|_{L^2(\e(T_n\setminus I_n))}\,\lesssim_{C_0}\,|\e I_n|^{\frac1p-\frac12}\|\nabla u\|_{L^2(\e(T_n\setminus I_n))}.
\end{equation*}
Given some ball $B\subset\R^d$, we apply this local extension around each inclusion~$\e I_n$ intersecting~$B$.
Provided $0<\e<\diam(B)$, arguing similarly as in Step~2 of the proof of item~(i), we can pretend that for each inclusion~$\e I_n$ intersecting $B$ the neighborhood $\e T_n$ is included in $2B$.
Given $u\in H^1_0(B)$, extending it by $0$ on $2B\setminus B$, recalling that the neighborhoods $\{T_n\}_n$ are pairwise disjoint, and setting for abbreviation 
\[ F_\e (B)~:=~ \bigcup_{n:\e I_n \cap B \neq\varnothing} \e  I_n~~\subset~~\bigcup_{n:\e I_n \cap B \neq\varnothing} \e  T_n~=:~T_\e (B)~~\subset~~2B,\]
we may then define
\[P_{B,\e}u\,:=\,u\mathds1_{B\setminus T_\e(B)}+\sum_{n:\e I_n\cap B\neq\varnothing}(P_n^\e u)\mathds1_{\e T_n}~\in~W^{1,1}_0(2B).\]
By definition, this satisfies $P_{B,\e}u=u$ in $B\setminus F_\e(B)$ and for all $1\le p<2\frac{d+1}{d+3}$,
\begin{equation*}
\begin{array}{lll}
\|\nabla P_{B,\e} u\|_{L^p(  2B)}^p
&\le&\displaystyle\|\nabla u\|_{L^p(B\setminus T_\e(B))}^p+\sum_{n:\e I_n\cap B\ne\varnothing}\|\nabla P_n^\e u\|_{L^p(\e T_n)}^p\\[6mm]
&\lesssim_{C_0,p}&\displaystyle\|\nabla u\|_{L^p(B\setminus T_\e(B))}^p+\sum_{n:\e I_n\cap B\ne\varnothing}|\e I_n|^{1-\frac p2}\|\nabla u\|_{L^2(\e (T_n\setminus I_n))}^p\\[6mm]
&\lesssim&\displaystyle|B|^{1-\frac{p}2}\|\nabla u\|_{L^2(B\setminus F_\e(B))}^p,
\end{array}
\end{equation*}
which is the desired estimate~\eqref{eq:ass1-1}.

\medskip
\step4 Conclusion --- part~2.\\
We turn to the validity of the second part~\eqref{eq:ass1-2} of Assumption~\ref{ass1} for all $1\le p<2\frac{d+1}{d+3}$.
In terms of the extension operators $\{P_n\}_n$ constructed in Step~2, we define $P:H^1_\loc(\R^d)\to W^{1,1}_\loc(\R^d)$ as follows,
\[Pu\,:=\,u\mathds1_{\R^d\setminus\cup_nT_n}+\sum_nP_n\big(u(r_1^n\cdot)|_{T_n'\setminus I_n'}\big)(\tfrac\cdot{r_1^n})\,\mathds1_{T_n}~\in~W^{1,1}_\loc(\R^d).\]
By definition, $Pu=u$ in $\R^d\setminus\cup_nI_n$. As in the proof of item~(i), we can ensure that $Pu$ is a stationary field whenever the pair $(u,\{I_n\}_n)$ is jointly stationary, and it remains to check that it satisfies the desired estimate~\eqref{eq:ass1-2}.
Given $u\in L^2(\Omega;H^1_\loc(\R^d))$ such that $(u,\{I_n\}_n)$ is jointly stationary,
using~\eqref{eq:expand-expect}, we can decompose
\[\E[|\nabla Pu|^p]\,=\,\E[|\nabla Pu|^p\mathds1_{\R^d\setminus\cup_nT_n}]+\E\bigg[\sum_n\frac{\mathds1_{0\in I_n}}{|I_n|}\int_{T_n}|\nabla Pu|^p\bigg].\]
By definition of $Pu$ together with the properties of the local extensions $\{P_n\}_n$, we get for $1\le p<2\frac{d+1}{d+3}$,
\[\E[|\nabla Pu|^p]\,\lesssim_{C_0,p}\,\E[|\nabla u|^p\mathds1_{\R^d\setminus\cup_nT_n}]+\E\bigg[\sum_n\frac{\mathds1_{0\in I_n}}{|I_n|}|T_n|^{1-\frac p2}\Big(\int_{T_n\setminus I_n}|\nabla u|^2\Big)^\frac p2\bigg],\]
hence, by Jensen's, H\"older's inequalities, and the property $|T_n|\lesssim |I_n|$,
\begin{eqnarray*}
\E[|\nabla Pu|^p]&\lesssim_{C_0,p}&\E[|\nabla u|^2\mathds1_{\R^d\setminus\cup_nT_n}]^\frac p2+\E\bigg[\sum_n\frac{\mathds1_{0\in I_n}}{|I_n|}|T_n|\bigg]^{1-\frac p2}\E\bigg[\sum_n\frac{\mathds1_{0\in I_n}}{|I_n|}\int_{T_n\setminus I_n}|\nabla u|^2\bigg]^\frac p2
\\
&\lesssim_{C_0,p}&\E[|\nabla u|^2\mathds1_{\R^d\setminus\cup_nT_n}]^\frac p2+ \E\bigg[\sum_n\frac{\mathds1_{0\in I_n}}{|I_n|}\int_{T_n\setminus I_n}|\nabla u|^2\bigg]^\frac p2
\end{eqnarray*}
Using~\eqref{eq:expand-expect} again, this yields for all $1\le p<2\frac{d+1}{d+3}$,
\[\|\nabla Pu\|_{L^p(\Omega)}\,\lesssim\,\|\mathds1_{\R^d\setminus\cup_nI_n}\nabla u\|_{L^2(\Omega)},\]
which is the desired estimate~\eqref{eq:ass1-2}.
\qed

\subsubsection{Proof of~(v)}
The validity of the first part of~\ref{ass1} is due to Zhikov~\cite[Appendix]{Zhikov-90}, and it can be combined with similar arguments as above to deduce the second part of~\ref{ass1}. We leave the details to the reader.\qed

\subsection{Proof of Lemma~\ref{lem:ass2}}
We split the discussion into two parts, considering items~(i) and~(ii) separately. While $L^p$ regularity questions are classical, some care is needed here to check the precise dependence on the regularity of the inclusions, so as to ensure a uniform statement on the whole collection $\{I_n\}_n$ of inclusions.

\subsubsection{Proof of~(i)}
We split the proof into two steps, and consider the cases $\diam(I_n)\ge1$ and $\diam(I_n)\le1$ separately.

\medskip
\step1 Case $\diam(I_n)\ge1$.\\
Given some unit ball~$B$ with $I_n\cap B\ne\varnothing$ and given $g\in L^\infty(I_n\cap 2B)^d$, consider some $u\in H^1_0(I_n)$ satisfying
\begin{equation}\label{eq:ug}
u-\triangle u=\Div(g),\qquad\text{in $I_n\cap2B$}.
\end{equation}
As $\diam(I_n)\ge1$, we note that the uniform $C^1$ condition for $I_n'=\diam(I_n)^{-1}I_n$ necessarily holds a fortiori for $I_n$ itself (with the same constant $C_0$ and modulus of continuity $\omega$).
Let then $\{D_i^n\}_i$ be a collection of balls covering $\partial I_n$ satisfying the properties of the uniform $C^1$ condition in the statement, and note that without loss of generality we may further assume $\diam(D_i^n)\le\frac23$ (say).
Also note that the geometric assumptions ensure that for all $i$ we have
\begin{equation}\label{eq:geom}
|I_n\cap D_i^n|\,\gtrsim_{C_0}\,|D_i^n|.
\end{equation}
For some constant $C_1\ge C_0$ only depending on $d,C_0$, we can complement this collection with a collection of balls $\{B_i^n\}_i$ covering~$(I_n\cap B)\setminus\cup_iD_i^n$ such that, for all $i$,
\begin{equation*}
\tfrac32B_i^n\,\subset\, I_n,
\qquad
\diam(B_i^n)\,\in\,[C_1^{-1},\tfrac23],\qquad
\sup_i\sharp\{j:B_i^n\cap B_j^n\ne\varnothing\}\,\le\, C_1.
\end{equation*}
Note that the upper bound on diameters ensures that for any $D_i^n$ (resp.~$B_i^n$) with $D_i^n\cap B\ne\varnothing$ (resp.~$B_i^n\cap B\ne\varnothing$) we have $\frac32D_i^n\subset2B$ (resp.~$\frac32B_i^n\subset2B$).
By our geometric assumptions, applying standard~$L^p$ regularity theory to the solution $u$ of~\eqref{eq:ug}, we get the following estimates (see e.g.~\cite{Gilbarg-Trudinger-01}): for all $2\le q<\infty$, we have on each `boundary' ball~$D_i^n$ with $D_i^n\cap B\ne\varnothing$,
\begin{equation}\label{eq:bndaryCZ}
\|\nabla u\|_{L^q(I_n\cap D_i^n)}\,\lesssim_{q,C_0}\,\|g\|_{L^q(I_n\cap\frac32D_i^n)}+|D_i^n|^{\frac1q-\frac12}\|\nabla u\|_{L^2(I_n\cap \frac32D_i^n)},
\end{equation}
and similarly, on each `interior' ball $B_i^n$ with $B_i^n\cap B\ne\varnothing$,
\begin{equation}\label{eq:interiorCZ}
\|\nabla u\|_{L^q(I_n\cap B_i^n)}\,\lesssim_{q,C_0}\,\|g\|_{L^q(I_n\cap\frac32B_i^n)}+|B_i^n|^{\frac1q-\frac12}\|\nabla u\|_{L^2(I_n\cap \frac32B_i^n)},
\end{equation}
where the multiplicative constants only depend on $d,q,C_0,\omega$.
Summing the above estimates, recalling that the balls $\{D_i^n\}_i$ and $\{B_i^n\}_i$ have diameters $\ge C_1^{-1}$,
and using the $\ell^q-\ell^2$ inequality, we can deduce
\begin{equation}\label{eq:ug-estimreg}
\|\nabla u\|_{L^q(I_n\cap B)}\,\lesssim_{q,C_0}\,\|g\|_{L^q(I_n\cap 2B)}+\|\nabla u\|_{L^2(I_n\cap 2B)},
\end{equation}
thus proving the validity of Assumption~\ref{ass2} for all $2\le q<\infty$.

\medskip
\step2 Case $\diam(I_n)\le1$.\\
In this case, we note that for a unit ball $B$ with $I_n\cap B\ne \varnothing$ we necessarily have $I_n\subset2B$.
Hence, Assumption~\ref{ass2} amounts to the following: given $g\in L^q(I_n)^d$, if $u\in H^1_0(I_n)$ is the weak solution of
\begin{equation}\label{eq:ug-re}
u-\triangle u=\Div(g),\qquad\text{in $I_n$},
\end{equation}
then we have
\begin{equation}\label{eq:ug-concl}
\|\nabla u\|_{L^q(I_n)}\,\lesssim_q\,\|g\|_{L^q(I_n)}.
\end{equation}
For that purpose, let us first consider the rescaled inclusion $I_n':=D_n^{-1}I_n$ with $D_n:=\diam(I_n)\le1$. Given $g\in L^\infty(I_n)^d$, let $g':=g(D_n\cdot)\in L^\infty(I_n')^d$ and consider the weak solution $u'\in H^1_0(I_n')$ of the rescaled equation
\begin{equation}\label{eq:ug-re-resc}
D_n^2u'-\triangle u'=\Div(g'),\qquad\text{in $I_n'$}.
\end{equation}
Repeating the argument of Step~1 for this rescaled equation, and noting that it holds independently of the size of the zeroth-order term, we find for all $2\le q<\infty$, for any unit ball $B$ with $I_n'\cap B\ne\varnothing$,
\[\|\nabla u'\|_{L^q(I_n'\cap B)}\,\lesssim_q\,\|g'\|_{L^q(I_n'\cap 2B)}+\|\nabla u'\|_{L^2(I_n'\cap2B)}.\]
As the rescaled inclusion satisfies $\diam(I_n')=1$, the condition $I_n'\cap B\ne\varnothing$ implies $I_n'\subset2B$. Hence, the above yields for all $2\le q<\infty$,
\[\|\nabla u'\|_{L^q(I_n')}\,\lesssim_q\,\|g'\|_{L^q(I_n')}+\|\nabla u'\|_{L^2(I_n')}.\]
Now note that the energy estimate for~\eqref{eq:ug-re-resc}, together with Jensen's inequality, yields for any $q\ge2$,
\[\|\nabla u'\|_{L^2(I_n')}\,\le\,\|g'\|_{L^2(I_n')}\,\le\,|I_n'|^{\frac12-\frac1q}\|g'\|_{L^q(I_n')}\,\lesssim\,\|g'\|_{L^q(I_n')}.\]
Combined with the above, this entails for all $2\le q<\infty$,
\[\|\nabla u'\|_{L^q(I_n')}\,\lesssim\,\|g'\|_{L^q(I_n')},\]
and the conclusion~\eqref{eq:ug-concl} then follows by scaling.

\subsubsection{Proof of~(ii)}
If the inclusions $I_n$'s only have Lipschitz boundary, it is well known that Calder\'on--Zygmund estimates might in general fail to hold in $L^q$ for some $2\le q<\infty$. Yet, it has been shown by Jerison and Kenig~\cite[Theorem~1.1]{Jerison-Kenig-95} that there exists some $q_0>3$ (or $q_0>4$ if $d=2$), only depending on the Lipschitz constant of the domain, such that Calder\'on--Zygmund estimates hold in $L^q$ for all $2\le q\le q_0$. Although only stated in a global form in~\cite{Jerison-Kenig-95}, such estimates can be checked to hold in the localized form that we need in~\eqref{eq:bndaryCZ} \&~\eqref{eq:interiorCZ}, cf.~\cite{Jerison-perso}.
In the special case of a convex polygonal domain (or of $C^1$ deformations thereof), the estimates are actually known to hold in~$L^q$ for all $2\le q<\infty$ (see e.g.~\cite{MR911445} and~\cite[Section~4.3.1]{MR2641539}).
Then proceeding as for item~(i), the conclusion follows. 
\qed

\section*{Data availability statement}
There is no associated data with this article.

\section*{Acknowledgements}
The authors are grateful to Igor Vel\v{c}i\'c and Valery Smyshlyaev for their suggestions which helped substantially improve the results of this manuscript.
EB and AG acknowledge financial support from the European Research Council (ERC) under the European Union's Horizon 2020 research and innovation programme (Grant Agreement n$^\circ$864066).
MD acknowledges financial support from the European Union (ERC, PASTIS, Grant Agreement n$^\circ$101075879).\footnote{Views and opinions expressed are however those of the authors only and do not necessarily reflect those of the European Union or the European Research Council Executive Agency. Neither the European Union nor the granting authority can be held responsible for them.}

\bibliographystyle{abbrv}
\bibliography{biblio}

\end{document}